\documentclass{amsart}

\usepackage[mathscr]{euscript}
\usepackage{amssymb}
\usepackage{amsmath}
\usepackage{latexsym}
\usepackage{graphicx}
\usepackage{float}

\makeatletter
\renewcommand\@biblabel[1]{#1.}
\makeatother

\newtheorem{thm}{Theorem}[section]
\newtheorem{lem}[thm]{Lemma}
\newtheorem{cor}[thm]{Corollary}

\newtheorem{prop}[thm]{Proposition}
\newtheorem{df}[thm]{Definition}

\newtheorem{remark}[thm]{Remark}
\newtheorem{question}[thm]{Question}

\newcommand\mf{\mathfrak }

\newcommand{\R}{{\mathbb R}}

\newcommand{\Fin}{\textrm{Fin}} 
\newcommand{\Ctbl}{\textrm{Ctbl}} 
\newcommand{\cc}{\mf c}

\newcommand{\I}{\mathcal I}
\newcommand{\J}{\mathcal J}
\newcommand{\h}{\mathcal H}
\newcommand{\Null}{\mathcal N}
\newcommand{\M}{\mathcal M}

\title[Haar-smallest sets]{Haar-smallest sets}

\author{Adam Kwela}
\address{Institute of Mathematics, Faculty of Mathematics, Physics and Informatics, University of Gda\'{n}sk, ul.~Wita Stwosza 57, 80-308 Gda\'{n}sk, Poland}
\email{adam.kwela@ug.edu.pl}

\begin{document}
\begin{abstract}
In this paper we are interested in the following notions of smallness: a subset $A$ of an abelian Polish group $X$ is called Haar-countable/Haar-finite/Haar-$n$ if there are a Borel hull $B\supseteq A$ and a copy $C$ of $2^\omega$ in $X$ such that $(C+x)\cap B$ is countable/finite/of cardinality at most $n$, for all $x\in X$. 

Recently, Banakh et al. have unified the notions of Haar-null and Haar-meager sets by introducing Haar-$\mathcal{I}$ sets, where $\mathcal{I}$ is a collection of subsets of $2^\omega$. It turns out that if $\mathcal{I}$ is the $\sigma$-ideal of countable sets, the ideal of finite sets or the collection of sets of cardinality at most $n$, then we get the above notions. Moreover, those notions have been studied independently by Zakrzewski (under a different name -- perfectly $\kappa$-small sets).

We study basic properties of the corresponding families of small sets, give suitable examples distinguishing them (in all groups of the form $\mathbb{R}\times X$, where $X$ is an abelian Polish group) and study $\sigma$-ideals generated by closed members of the considered families. In particular, we show that Haar-finite sets do not form an ideal. Moreover, we answer some questions concerning null-finite sets asked by Banakh and Jab\l o\'{n}ska and pose several open problems.
\end{abstract}

\maketitle

\section{Introduction}

We follow standard set-theoretic and topological notation and terminology (see \cite{Kechris}). In particular, by $\omega$ we denote the set $\{0,1,\ldots\}$, identify each $n\in\omega$ with the set $\{0,1,\ldots,n-1\}$ and denote by $|X|$ the cardinality of a set $X$. By a Cantor set we mean a homeomorphic copy of the space $2^\omega$.

All Polish groups studied in the paper are assumed to be uncountable.

In this paper we are interested in the following notions of Haar-smallness.

\begin{df}
Let $A$ be a subset of an abelian Polish group $X$.
\begin{itemize} 
	\item $A$ is Haar-countable ($A\in\h\Ctbl$) if there are a Borel hull $B\supseteq A$ and a copy $C$ of $2^\omega$ in $X$ such that $(C+x)\cap B$ is countable for all $x\in X$;
	\item $A$ is Haar-finite ($A\in\h\Fin$) if there are a Borel hull $B\supseteq A$ and a copy $C$ of $2^\omega$ in $X$ such that $(C+x)\cap B$ is finite for all $x\in X$;
	\item $A$ is Haar-$n$ ($A\in\h n$), for $n\in\omega\setminus\{0\}$, if there are a Borel hull $B\supseteq A$ and a copy $C$ of $2^\omega$ in $X$ such that $|(C+x)\cap B|\leq n$ for all $x\in X$.
\end{itemize}
\end{df}

Clearly,
$$\text{Haar-}n\ \Longrightarrow\ \text{Haar-}(n+1)\ \Longrightarrow\ \text{Haar-finite}\ \Longrightarrow\ \text{Haar-countable}$$
for any $n\in\omega\setminus\{0\}$. 

By removing the requirement about Borel hulls from the above definition, we would obtain the notions of \emph{perfectly $\kappa$-small sets} studied by Zakrzewski in \cite{Zakrzewski} in the context of ccc $\sigma$-ideals. The particular case of Haar-$1$ sets has been investigated by Balcerzak in \cite{Balcerzak}, where he introduced the so-called property (D) of an invariant $\sigma$-ideal $\I$, which says that there is a $\sigma$-compact Haar-$1$ set not belonging to $\I$. Moreover, Banakh, Lyaskovska and Repov\v s in \cite{BLR} considered packing index of a set, which is closely connected to Haar-$1$ sets (namely, a Borel set is Haar-$1$ if and only if its packing index is uncountable). In turn, Haar-countable sets have been studied by Darji and Keleti in \cite{DK}. Let us point out a connection of Haar-countable sets with covering the whole space by less than continuum translates of its subset: less than continuum translates of a Haar-countable set cannot cover the whole space (cf. \cite[Lemma 4.1]{Zakrzewski}). This fact links Haar-countable sets with the works of Dobrowolski, Elekes, Marciszewski, Miller, Stepr\=ans and T\'oth (see \cite{DM}, \cite{Elekes}, \cite{ElekesToth} and \cite{MS}).

The following discussion will explain the choice of names in the above definition.

Recall that a collection of subsets of a set $X$ is called:
\begin{itemize} 
	\item a \emph{semi-ideal} on $X$, if it is closed under taking subsets; 
	\item an \emph{ideal} on $X$, if it is closed under taking subsets and finite unions; 
	\item a \emph{$\sigma$-ideal} on $X$, if it is closed under taking subsets and countable unions.
\end{itemize}

In the absence of Haar measure in Polish groups which are not locally compact, in \cite{Christensen} Christensen introduced the notion of Haar-null sets. A big advantage of this concept is that in a locally compact group a set is Haar-null if and only if it is of Haar measure zero and at the same time it can be used in a significantly larger class of groups. In \cite{HSY} Hunt, Sauer and Yorke, unaware of Christensen's paper, reintroduced the notion of Haar-null sets in the context of dynamical systems  (in their paper Haar-null sets are called shy sets and their complements are called prevalent sets). A topological analogue of Haar-null sets was defined by Darji in \cite{Darji}. Similarly as before, in the locally compact case this notion is equivalent to the notion of meager sets. However, Haar-meager sets enable us to capture in a single notion of smallness both group and topological structures. It is known that Haar-null and Haar-meager sets form $\sigma$-ideals (cf. \cite{Christensen}, \cite{Darji} and \cite{HSY}).  

Recently, the notions of Haar-null sets and Haar-meager sets were unified by Banakh et al. in \cite{1} by introducing the concept of Haar-small sets. For a semi-ideal $\I$ on the Cantor cube $2^\omega$, we say that a subset $A$ of an abelian Polish group $X$ is \emph{Haar-$\I$} ($A\in\h\I$) if there are a Borel hull $B\supseteq A$ and a continuous map $f\colon 2^\omega\to X$ with $f^{-1}[B+x]\in\I$ for all $x\in X$. Obviously, the collection of Haar-$\I$ subsets of an abelian Polish group is a semi-ideal. It turns out that if $\I$ is the $\sigma$-ideal $\mathcal{N}$ of subsets of $2^\omega$ of Lebesgue's measure zero, then we obtain Haar-null sets (cf. \cite{1}). The same holds for the $\sigma$-ideal $\mathcal{M}$ of meager subsets of $2^\omega$ and Haar-meager sets (cf. \cite{1}).

In this paper we will restrict ourselves only to the following semi-ideals:
\begin{itemize} 
	\item the $\sigma$-ideal $[2^\omega]^{\leq\omega}$ of at most countable subsets of $2^\omega$;
	\item the ideal $[2^\omega]^{<\omega}$ of finite subsets of $2^\omega$;
	\item the semi-ideals $[2^\omega]^{\leq n}$ of subsets of $2^\omega$ of cardinality at most $n$, for $n\in\omega$.
\end{itemize}

Clearly, each Haar-countable set is Haar-null and Haar-meager. As a consequence, a countable union of Haar-countable sets cannot be the whole space (see also \cite[Corollary 3.7]{Zakrzewski} where this fact is pointed out in the case of the group $2^\omega$). 

In the mentioned cases we have a nice characterization, which connects notions introduced at the beginning of this paper with the above considerations (by showing that $\h[2^\omega]^{\leq\omega}=\h\Ctbl$, $\h[2^\omega]^{<\omega}=\h\Fin$ and $\h[2^\omega]^{\leq n}=\h n$). Equivalence of items $(b)$ and $(c)$ in the following proposition is implicit in \cite[Lemma 3.1]{Zakrzewski}. 

In the proofs we will often use item $(c)$ instead of the above definition of Haar-$\I$ sets without any reference to the following proposition.

\begin{prop}
Let $\alpha$ stand for one of the symbols $\leq\omega$, $<\omega$ or $\leq n$ for some $n\in\omega$. Then the following are equivalent for any Borel subset $B$ of an abelian Polish group $X$:
\begin{itemize}
	\item[(a)] $B\in\h[2^\omega]^\alpha$;
	\item[(b)] there is a Cantor set $D\subseteq X$ with $(D-x)\cap B\in[X]^\alpha$ for each $x\in X$;
	\item[(c)] there is a Cantor set $D\subseteq X$ such that for any $S\subseteq D$ with $S\notin[X]^\alpha$ we have $\bigcap_{s\in S}(B-s)=\emptyset$.
\end{itemize}
\end{prop}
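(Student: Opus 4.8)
The plan is to prove the two equivalences (a)$\Leftrightarrow$(b) and (b)$\Leftrightarrow$(c) separately, in each case with the \emph{same} Cantor set serving as a witness, so that the argument reduces to translation bookkeeping together with the hereditariness of the semi-ideals $[X]^\alpha$. For (b)$\Rightarrow$(a) I would simply fix a homeomorphism $f\colon 2^\omega\to D$, where $D\subseteq X$ is a Cantor set as in (b); since $f$ is a bijection onto $D$ and $d\mapsto d-x$ is a bijection of $D\cap(B+x)$ onto $(D-x)\cap B$, we get $|f^{-1}[B+x]|=|D\cap(B+x)|=|(D-x)\cap B|$, which is at most $\omega$, finite, or at most $n$ according to $\alpha$; hence $f^{-1}[B+x]\in[2^\omega]^\alpha$ for every $x$, and $B\in\h[2^\omega]^\alpha$ with Borel hull $B$ itself.

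For (a)$\Rightarrow$(b) assume $B\neq\emptyset$ (the empty case is trivial) and fix a continuous $f\colon 2^\omega\to X$ with $f^{-1}[B+x]\in[2^\omega]^\alpha$ for all $x$. First I would note that $f[2^\omega]$ must be uncountable: otherwise, as $2^\omega$ cannot be a countable union of countable sets, some fibre $f^{-1}(c)$ is uncountable, and choosing $x$ with $c\in B+x$ (possible since $B\neq\emptyset$) yields $f^{-1}[B+x]\supseteq f^{-1}(c)\notin[2^\omega]^\alpha$, a contradiction. Next I would invoke the standard fact that a continuous map on $2^\omega$ with uncountable image is injective on some Cantor set $P\subseteq 2^\omega$; this can be obtained by a fusion argument (recursively splitting a clopen set with uncountable image into two clopen pieces, again with uncountable images, whose $f$-images are separated by disjoint open subsets of $X$), or by taking a Borel section of the restriction of $f$ over a Cantor subset of $f[2^\omega]$ and noting that its range is an analytic set of size continuum. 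Then $D:=f[P]$ is a Cantor set in $X$, and since $f\restriction P$ is a bijection, $|(D-x)\cap B|=|D\cap(B+x)|=|P\cap f^{-1}[B+x]|\leq|f^{-1}[B+x]|$, so $(D-x)\cap B\in[X]^\alpha$ for all $x$, which is (b).

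For (b)$\Leftrightarrow$(c) I would show that, for a fixed Cantor set $D$, both conditions are equivalent to the statement that $D\cap(B-x)\in[X]^\alpha$ for every $x\in X$. Indeed, for $S\subseteq D$ one has $\bigcap_{s\in S}(B-s)\neq\emptyset$ if and only if there is $x$ with $x+s\in B$ for all $s\in S$, that is, with $S\subseteq D\cap(B-x)$; hence the requirement in (c) says precisely that every subset of every set of the form $D\cap(B-x)$ belongs to $[X]^\alpha$, which, $[X]^\alpha$ being hereditary, is the same as $D\cap(B-x)\in[X]^\alpha$ for all $x$. On the other side, $d\mapsto d+x$ is a bijection of $D\cap(B-x)$ onto $(D+x)\cap B$, and as $x$ runs over $X$ the sets $(D+x)\cap B$ are exactly the sets $(D-x)\cap B$; since membership in $[X]^\alpha$ depends only on cardinality, the displayed statement is therefore equivalent to (b).

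I expect the only non-routine point to be the ``injectivity on a Cantor set'' fact used in (a)$\Rightarrow$(b) --- this is exactly where the Borel-hull clause in the definition of $\h\I$ is harmless, since here $B$ is already Borel, so passing from the continuous map $f$ to the genuine topological copy $D=f[P]$ needs no further hull. Everything else is elementary bookkeeping with translations and with the fact that $[X]^{\leq\omega}$, $[X]^{<\omega}$ and $[X]^{\leq n}$ are hereditary.
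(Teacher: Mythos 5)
Your proposal is correct, and its overall architecture matches the paper's: (b)$\Rightarrow$(a) by a homeomorphism onto $D$, (a)$\Rightarrow$(b) by producing a Cantor set tied to $f[2^\omega]$, and (b)$\Leftrightarrow$(c) by rewriting both as one statement about $D\cap(B-x)$ and using that $[X]^\alpha$ is hereditary and translation-invariant. The one place you diverge is the implication (a)$\Rightarrow$(b), which you yourself flag as the only non-routine point: you make $f$ injective on a Cantor set $P\subseteq 2^\omega$ (via a fusion argument or a Borel section plus Lusin--Suslin) and set $D=f[P]$. That works, but it is a strictly heavier tool than the paper needs. The paper simply notes that $f[2^\omega]$ is compact and (WLOG, by your own fibre argument) uncountable, hence contains some Cantor set $D$, and then uses the trivial counting inequality $|f^{-1}[B+x]|\geq|f[2^\omega]\cap(B+x)|\geq|D\cap(B+x)|=|(D-x)\cap B|$, which requires no injectivity at all: every point of the image meets the preimage in at least one point, and that is all the cardinality comparison needs. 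So your route buys nothing here beyond what the elementary inequality already gives, at the cost of invoking uniformization/fusion machinery; conversely, your explicit justification that $f[2^\omega]$ must be uncountable when $B\neq\emptyset$ is a nice filling-in of a step the paper dismisses with ``without loss of generality.''
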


\begin{proof}
Firstly, we show that $(a)$ implies $(b)$. Let $f\colon 2^\omega\to X$ be continuous and such that $f^{-1}[B+x]\in [2^\omega]^\alpha$ for all $x\in X$. Without loss of generality we can assume that $f[2^\omega]$ is uncountable (otherwise $f$ could be a witness only for $B=\emptyset$ and this case is easy). As $f[2^\omega]$ is also compact, we can find a Cantor set $D\subseteq f[2^\omega]$ (see \cite{Kechris}). Now it suffices to observe that 
$$|f^{-1}[B+x]|\geq|f[2^\omega]\cap(B+x)|\geq|D\cap(B+x)|=|(D-x)\cap B|$$
for every $x\in X$, so $f^{-1}[B+x]\in [2^\omega]^\alpha$ implies $(D-x)\cap B\in[X]^\alpha$.

To see that  $(b)$ implies $(a)$ it suffices to take a homeomorphism $f\colon 2^\omega\to D$.

Finally, the equivalence of $(b)$ and $(c)$ is a consequence of the fact that both assertions can be written as: there is a Cantor set $D\subseteq X$ such that for every $x\in X$ and every $S\subseteq D$ with $S\notin[X]^\alpha$ there is such $s\in S$ that $s-x\notin B$. 
\end{proof}

Hunt, Sauer and Yorke introduced Haar-null sets in order to state mathematically precisely sentences saying that some property holds for almost all elements of a given infinite-dimensional linear space such as $C([0,1])$ or $L^1([0,1])$. This was supposed to be a counterpart of sets of measure zero in such spaces. As $\I\subseteq\J$ implies $\h\I\subseteq\h\J$, the notion of Haar-small sets enables us to develop a whole hierarchy of systems of small subsets of abelian Polish groups. Next two propositions illustrate the above by distinguishing two of the rare properties which motivated Hunt, Sauer and Yorke for introducing Haar-null sets. Note that the set $\mathcal{A}$ of functions with $\int_0^1 f(x)dx=0$ is Haar-null in $L^1[0,1]$ by \cite{HSY} and the set $\mathcal{SD}$ of somewhere differentiable functions (i.e., functions which have a derivative at some point) is Haar-null in $C[0,1]$ by \cite{Hunt}.

\begin{prop} 
The set $\mathcal{A}$ of functions with $\int_0^1 f(x)dx=0$ is Haar-$1$ in $L^1[0,1]$ (as well as in $C[0,1]$).
\end{prop}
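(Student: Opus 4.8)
The plan is to apply the preceding proposition (item (b)) and simply exhibit a single Cantor set $D$ inside $L^1[0,1]$ (respectively $C[0,1]$) for which $(D-x)\cap\mathcal A$ has at most one element for every $x$ in the group. Note first that $\mathcal A$ is the kernel of the continuous linear functional $\phi(f)=\int_0^1 f(x)\,dx$, so $\mathcal A$ is closed, hence Borel, and we need not worry about passing to a Borel hull.

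The key observation is that $f-x\in\mathcal A$ holds exactly when $\phi(f)=\phi(x)$. Hence, if two distinct points $d_1,d_2$ of $D$ both belonged to $\mathcal A+x$ for one and the same $x$, we would get $\phi(d_1)=\phi(x)=\phi(d_2)$. Therefore it suffices to produce a Cantor set $D$ on which $\phi$ is injective: any such $D$ witnesses that $\mathcal A$ is Haar-$1$.

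Concretely, I would fix a Cantor set $K\subseteq\R$ (say, the standard middle-thirds set), write $\mathbf 1$ for the constant function equal to $1$, and set $D=\{t\cdot\mathbf 1:t\in K\}$. The map $t\mapsto t\cdot\mathbf 1$ is linear and satisfies $\|t\cdot\mathbf 1\|_1=|t|$ (and $\|t\cdot\mathbf 1\|_\infty=|t|$), so it is a topological embedding of $\R$ into $L^1[0,1]$ and also into $C[0,1]$; thus $D$ is a copy of $2^\omega$ lying in both spaces, and the two cases are handled at once. Since $\phi(t\cdot\mathbf 1)=t$, the functional $\phi$ is injective on $D$, and the argument above applies.

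I do not expect a genuine obstacle here. The only points requiring a line of verification are that $D$ really is homeomorphic to $2^\omega$ (immediate from the fact that $t\mapsto t\cdot\mathbf 1$ is an isometry up to a scalar) and that the equivalence $f-x\in\mathcal A\iff\phi(f)=\phi(x)$ together with the evaluation $\phi(t\cdot\mathbf 1)=t$ is correct, which is just linearity and continuity of the integral.
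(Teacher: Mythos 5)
Your proof is correct and is essentially the paper's own argument: the paper also takes as witness the constant functions indexed by a Cantor set in $\R$ and observes that $\int_0^1(c-h)\,dx=c-\int_0^1 h\,dx$ vanishes for at most one value of $c$, which is exactly your injectivity-of-the-integral-functional observation. The only cosmetic difference is that you phrase it via item (b) of the characterization and the kernel of the functional, while the paper works directly with the continuous map $c\mapsto c\cdot\mathbf 1$ and preimages.
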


\begin{proof}
It is easy to check that $\mathcal{A}$ is Borel.

Let $C\subseteq\R$ denote the standard ternary Cantor set. For each $c\in C$ define $\phi(c)\colon [0,1]\to\R$ by $\phi(c)(x)=c$ for all $x\in [0,1]$. It is easy to check that $\phi\colon C\to L^1[0,1]$ is a continuous map. 

As $C$ and $2^\omega$ are homeomorphic, to prove the first assertion it suffices to show that for every $h\in L^1 [0,1]$ the set $\phi^{-1}[\mathcal{A}+h]$ has cardinality at most $1$ or, in other words, for every $h\in L^1 [0,1]$ we have $\phi(c)-h\in\mathcal{A}$ for at most one $c\in C$. Fix any $h\in L^1[0,1]$ and suppose that $\int_0^1 (\phi(c)-h)(x)dx=0$ for some $c\in C$. Then for each $c'\in C$, $c'\neq c$, we have $\int_0^1 (\phi(c')-h)(x)dx=c'-c\neq 0$. Hence, $\phi$ is the desired map.

As all continuous functions are integrable, in the case of $C[0,1]$ the proof is exactly the same.
\end{proof}

\begin{prop}
\label{ND}
The set $\mathcal{SD}$ of somewhere differentiable functions is not Haar-finite in $C[0,1]$.
\end{prop}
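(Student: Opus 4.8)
The plan is to show that no pair consisting of a Borel hull $B\supseteq\mathcal{SD}$ and a Cantor set can witness Haar-finiteness of $\mathcal{SD}$. Since any such $B$ contains $\mathcal{SD}$, it is enough to prove that for every Cantor set $C\subseteq C[0,1]$ there is $x\in C[0,1]$ for which $(C+x)\cap\mathcal{SD}$ is infinite. The point is simply that a function constant on a nonempty open interval is differentiable there, hence belongs to $\mathcal{SD}$; so we only need to construct an $x$ that, upon being added, ``flattens'' infinitely many members of $C$ on small intervals.

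So fix a Cantor set $C\subseteq C[0,1]$; being uncountable, it contains pairwise distinct functions $c_0,c_1,\dots$. Fix also $q\in(0,1)$ and a strictly decreasing sequence $p_n\to q$ in $(0,1)$, together with pairwise disjoint closed intervals $J_n\subseteq(0,1)$ with $p_n\in\operatorname{int}J_n$, $\operatorname{diam}J_n\to0$, and (shrinking the $J_n$ using continuity of the individual $c_n$ at $p_n$) $|c_n(s)-c_n(p_n)|<\tfrac1{n+1}$ for all $s\in J_n$. For each $n$ pick a continuous ``bump'' $\phi_n\colon[0,1]\to[0,1]$ that is identically $1$ on a neighborhood of $p_n$ and identically $0$ outside $\operatorname{int}J_n$, and define $x\colon[0,1]\to\R$ by $x(t)=\phi_n(t)\bigl(c_n(p_n)-c_n(t)\bigr)$ for $t\in J_n$ and $x(t)=0$ for $t\notin\bigcup_nJ_n$. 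Then $x\in C[0,1]$: it is continuous on each $J_n$, it vanishes together with the surrounding zero function near each endpoint of each $J_n$, and $x(t)\to0=x(q)$ as $t\to q$, because on $J_n$ we have $|x|\le\sup_{s\in J_n}|c_n(p_n)-c_n(s)|\le\tfrac1{n+1}$ while a point near $q$ lies either off all the $J_n$ or in some $J_n$ with $n$ large. Finally, on the neighborhood of $p_n$ where $\phi_n\equiv1$ we get $(c_n+x)(t)=c_n(t)+c_n(p_n)-c_n(t)=c_n(p_n)$, so $c_n+x$ is constant there and hence lies in $\mathcal{SD}$; as the $c_n$ are pairwise distinct, $\{c_n+x:n\in\omega\}$ is an infinite subset of $(C+x)\cap\mathcal{SD}$, as required.

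The only subtle step — the one I expect to demand the most care in a full write-up — is the continuity of $x$ at the accumulation point $q$ of the points $p_n$. It is precisely to secure this that we shrink each $J_n$ so that the flattening correction $c_n(p_n)-c_n$ has sup-norm less than $\tfrac1{n+1}$ on $J_n$; note this uses only the continuity of each $c_n$ separately, so no equicontinuity (compactness) of $C$ enters the argument. All the remaining verifications — disjointness of the $J_n$, continuity of $x$ away from $q$, and the local constancy of $c_n+x$ — are routine.
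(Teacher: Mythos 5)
Your argument is correct. It rests on the same basic idea as the paper's proof --- translate by a function that makes infinitely many members of the witnessing Cantor set locally constant, hence somewhere differentiable --- but the implementation is genuinely different in one respect. The paper extracts an injective convergent sequence $(f_n)$ from the compact image of the witness and defines the translate $h$ to coincide with $f_n$ on the whole of an interval $I_n\subseteq(\frac{1}{n+2},\frac{1}{n+1})$, so that continuity of $h$ at the accumulation point $0$ depends on the uniform convergence $f_n\to f$. You instead flatten an arbitrary sequence of distinct elements $c_n\in C$ on tiny intervals $J_n$ chosen so that the bump-localized correction $\phi_n\cdot(c_n(p_n)-c_n)$ has sup-norm at most $\frac{1}{n+1}$; continuity of $x$ at the accumulation point $q$ then comes for free, and neither compactness of $C$ nor any convergence of the $c_n$ is used. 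Your version therefore proves the slightly stronger statement that for any countable family of continuous functions there is a single translate flattening each of them somewhere, and it also dispenses with the paper's separate treatment of the case of a finite image. One detail deserving a sentence in a full write-up: since $\operatorname{diam}J_n\to 0$ and $p_n\to q$, the intervals $J_n$ accumulate only at $q$ (and pairwise disjointness forces $q\notin\bigcup_n J_n$), which is exactly what makes $x$ locally identically zero near each endpoint of each $J_n$ and at every point of $[0,1]\setminus\bigcup_n J_n$ other than $q$; this justifies the phrase ``vanishes together with the surrounding zero function.''
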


\begin{proof}
Let $\phi\colon 2^\omega\to C[0,1]$ be a continuous map. We need to show that $\phi^{-1}[\mathcal{SD}+h]$ is infinite for some $h\in C[0,1]$.

If $\phi[2^\omega]$ is finite, then let $h\in C[0,1]$ be such that $\phi^{-1}[\{h\}]$ is infinite and observe that $h\in\mathcal{SD}+h$. Hence, $\phi^{-1}[\mathcal{SD}+h]$ is infinite as well. 

Assume now that $\phi[2^\omega]$ is infinite and take any injective convergent sequence $(f_n)\in\phi[2^\omega]$. Denote $f=\lim_{n}f_n$. Fix a sequence of closed intervals $(I_n)$ such that $I_n\subseteq (\frac{1}{n+2},\frac{1}{n+1})$ for each $n\in\omega$. Let $J_n=[\max I_{n+1},\min I_n]$ and $g_n\colon J_n\to\R$ be the linear function given by $g_n(\max I_{n+1})=0$ and $g_n(\min I_{n})=(f_n-f_{n+1})(\min I_n)$. Define $h\colon[0,1]\to\mathbb{R}$ by:
\begin{itemize}
	\item $h\upharpoonright [\max I_0,1]=f_0(\max I_0)$;
	\item $h\upharpoonright I_n=f_n\upharpoonright I_n$ for each $n\in\omega$;
	\item $h\upharpoonright J_n=(f_{n+1}+g_n)\upharpoonright J_n$ for each $n\in\omega$;
	\item $h(0)=f(0)$.
\end{itemize}
Note that the function $h$ is continuous (continuity in $0$ follows from $\lim_{n}f_n=f$ and $\lim_n \sup_{x\in J_n}|g_n(x)|=0$) and $f_n\in\mathcal{SD}+h$ for each $n\in\omega$ (as $(f_n-h)\upharpoonright I_n$ is constant). Since $(f_n)$ is injective, we conclude that $\phi^{-1}[\mathcal{SD}+h]$ is infinite.
\end{proof}

Actually, in the forthcoming paper \cite{ND} we use Haar-smallest sets in further studies of differentiability of continuous functions. For instance, we prove that $\mathcal{SD}$ is even not Haar-countable.

Although the concept of Haar-small sets is very recent, its variation described below had already been applied to solve an old problem of Baron and Ger from \cite{Ger}. We say that a subset $A$ of an abelian Polish group $X$ is \emph{null-finite} if there is a convergent sequence $(x_n)_{n\in\omega}\subseteq X$ such that $\{n\in\omega:\ x_n-x\in A\}$ is finite for all $x\in X$. This notion is very closely related to Haar-finite sets: a witness for a null-finite set is compact (i.e., the compact set $\{x_n:\ n\in\omega\}\cup\{\lim_n x_n\}$ has an analogous property to that of witnesses of Haar-finite sets), but it does not have to be uncountable as in the case of Haar-finite sets. Clearly, each Haar-finite set is null-finite. 

Null-finite sets were introduced by Banakh and Jab\l o\' nska in order to solve the mentioned problem of Baron and Ger concerning functional equations: a mid-point convex function $f\colon G\to \mathbb{R}$ defined on an open convex subset $G$ of a complete linear metric space $X$ is continuous if it is upper bounded on a Borel subset $B\subseteq G$ which is not Haar null or not Haar meager in $X$. 

The main aim of this paper is to develop a hierarchy of Haar-small sets. This requires constructing examples distinguishing $\h\I$ for various semi-ideals $\I$. This paper studies relationships between families $\h n$, $\h\Fin$, $\h\Ctbl$ and $\h\Null\cap\h\M$. We give such examples for the considered semi-ideals on the real line. However, in Section $2$ we show that our results can be applied to a wider class of abelian Polish groups. 

In Section $3$ we show that all countable sets are Haar-$1$ and construct an uncountable compact Haar-$1$ set. In Section $4$ we distinguish $\h 2$ from $\h 1$ by showing that the standard ternary Cantor set $C$ is Haar-$2$ but not Haar-$1$. Also, we conclude that a closed Haar-$2$ set does not have to be a countable union of closed Haar-$1$ sets. Section $5$ is devoted to proving that $\h (n+1)\setminus\h n$ is nonempty.

In the longest and most technical Section \ref{Haar-finite}, answering a question posed by Swaczyna during his talk on XLI Summer Symposium in Real Analysis (Wooster, 2017), we show that the family of all Haar-finite subsets of $\mathbb{R}$ is not an ideal. This means that $\I$ being an ideal does not determine whether $\h\I$ is an ideal. What is more, we get an example of a compact Haar-finite set which is not Haar-$n$ for any $n$.

In the last section we construct a compact Haar-countable set which is not Haar-finite and a compact null and meager set which is neither Haar-countable nor a countable union of closed Haar-countable sets. Thus, the $\sigma$-ideal generated by closed Haar-countable sets is a proper subset of $\mathcal{N}\cap\mathcal{M}$ (which is the same as $\h\mathcal{N}\cap\h\mathcal{M}$, since we are on $\R$).

All our results concerning Haar-finite sets can be adapted also for null-finite sets. Consequently, we show that the family of all null-finite subsets of $\mathbb{R}$ is not an ideal. This answers a question posed in the first version of \cite{2} and asked by Banakh during his talk at the conference Frontiers of Selection Principles (Warsaw, 2017). Moreover, in Section $7$ we construct a compact null and meager set which is not a countable union of closed null-finite sets. This is a partial solution to another problem from \cite{2}. Actually, we do not know whether there is a Borel null-finite subset of $\R$ which is not Haar-finite.

\section{Preliminaries} 

Let us point out a simple observation.

\begin{remark}
Observe that $\h[2^\omega]^{\leq\omega}=\h[2^\omega]^{<\cc}$. Indeed, a set $A$ is in $\h[2^\omega]^{<\cc}$ provided that there are a Borel $B\supseteq A$ and a continuous map $\phi\colon 2^\omega\to X$ with $|\phi[2^\omega]\cap (B+x)|<\cc$ for each $x\in X$. However, in this case the set $\phi[2^\omega]\cap (B+x)$ is countable as a Borel set (cf. \cite{Kechris}).
\end{remark}

If $\mathcal{A}\subseteq\mathcal{P}(X)$, then $\sigma\mathcal{A}$ is the $\sigma$-ideal generated by $\mathcal{A}$, i.e., the family of sets which can be covered by a countable union of elements of $\mathcal{A}$. Moreover, if $X$ is a topological space, then $\overline{\mathcal{A}}$ denotes the family of closed sets belonging to $\mathcal{A}$.

The following series of results collects some known facts about Haar-smallest sets. Some of them will be often used in our considerations.

\begin{thm}[{\cite[Proposition 10.2]{1}}]
For every semi-ideal $\I$ on $2^\omega$ the family $\h\I$ of all Haar-$\I$ sets in any abelian Polish group is translation-invariant and preserved under topological group isomorphisms (i.e., $\h\I=\{f[A]:\ A\in\h\I\}$ for each topological group isomorphism $f$). In particular, a linear copy of a Haar-$\I$ subset of $\R$ is still Haar-$\I$. 
\end{thm}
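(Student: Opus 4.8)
The statement to prove is: for every semi-ideal $\I$ on $2^\omega$, the family $\h\I$ is translation-invariant and preserved under topological group isomorphisms (in particular, linear copies of Haar-$\I$ subsets of $\R$ are Haar-$\I$). The plan is to unwind the definition of $\h\I$ and observe that both operations merely transport the witnessing data. Recall that $A\in\h\I$ means there exist a Borel hull $B\supseteq A$ and a continuous map $f\colon 2^\omega\to X$ with $f^{-1}[B+x]\in\I$ for all $x\in X$.

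First I would handle translation-invariance. Suppose $A\in\h\I$ with witnesses $B$ and $f$ as above, and fix $a\in X$. The natural candidate hull for $A+a$ is $B+a$, which is Borel because translation by $a$ is a homeomorphism of $X$; clearly $B+a\supseteq A+a$. The same map $f$ works as the witnessing continuous map: for any $x\in X$ we have $f^{-1}[(B+a)+x]=f^{-1}[B+(x+a)]\in\I$, using commutativity of $X$ and the fact that $x+a$ ranges over $X$ as $x$ does. Hence $A+a\in\h\I$.

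Next I would treat preservation under a topological group isomorphism $g\colon X\to Y$ between abelian Polish groups. Given $A\in\h\I$ in $X$ with witnesses $B\supseteq A$ Borel and $f\colon 2^\omega\to X$ continuous with $f^{-1}[B+x]\in\I$ for all $x$, I claim $g[A]\in\h\I$ in $Y$ with witnesses $g[B]$ and $g\circ f$. Indeed $g[B]$ is Borel since $g$ is a homeomorphism (it is a Borel isomorphism), and $g[B]\supseteq g[A]$. The map $g\circ f\colon 2^\omega\to Y$ is continuous. For any $y\in Y$, writing $x=g^{-1}(y)$ and using that $g$ is a group homomorphism, $(g\circ f)^{-1}[g[B]+y]=f^{-1}[g^{-1}[g[B]+y]]=f^{-1}[B+g^{-1}(y)]=f^{-1}[B+x]\in\I$, where the step $g^{-1}[g[B]+y]=B+x$ uses that $g^{-1}$ is a group isomorphism sending $g[B]+y=g[B]+g(x)=g[B+x]$ back to $B+x$. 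Therefore $g[g[A]]$—more precisely $g[A]$—lies in $\h\I$, and by applying the same argument to $g^{-1}$ we get the equality $\h\I=\{g[A]:A\in\h\I\}$. The final sentence about $\R$ follows since every linear bijection of $\R$ is a topological group automorphism.

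There is essentially no serious obstacle here; the only point requiring a small remark is that a topological group isomorphism is in particular a homeomorphism, so it and its inverse preserve the Borel $\sigma$-algebra, which is what legitimizes taking $g[B]$ as a Borel hull. Everything else is a direct bookkeeping computation with preimages and the homomorphism property, carried out exactly as above.
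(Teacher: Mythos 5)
Your argument is correct: transporting the witnessing data (hull and continuous map) through a translation or through a topological group isomorphism, using that $g[B]=(g^{-1})^{-1}[B]$ is Borel and that $g[B]+y=g[B+x]$ for $x=g^{-1}(y)$, is exactly the right verification, and applying it to $g^{-1}$ gives the stated equality. Note that the paper itself does not prove this statement but imports it from \cite{1} (Proposition 10.2), so there is no internal proof to compare with; your direct unwinding of the definition is the standard argument, with only the cosmetic slip ``$g[g[A]]$'' and the implicit use of translation-invariance to cover affine (not just linear) copies of subsets of $\R$ worth tidying.
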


\begin{thm}[Essentially {\cite[Proposition 12.6 and Corollary 13.4(3)]{1}}]
\label{sumyHn}
For a fixed abelian Polish group $X$, if $A,B\subseteq X$ are such that $A\in\overline{\h n}$ and $B\in\overline{\h m}$ for some $n,m\in\omega\setminus\{0\}$, then $A\cup B\in\overline{\h (n+m)}$. Moreover, we have $\sigma\left(\bigcup_{n\in\omega}\overline{\h n}\right)\subseteq\h\Ctbl$.
\end{thm}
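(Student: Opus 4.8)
The plan is to reduce both parts to the general results of \cite{1} on closed Haar-$\mathcal I$ sets, after recasting the statements via item $(b)$ of the characterization above and recording the (trivial) arithmetic of the semi-ideals involved. For the first assertion, I would start from Cantor sets $C_A,C_B\subseteq X$ with $|(C_A-x)\cap A|\leq n$ and $|(C_B-x)\cap B|\leq m$ for all $x\in X$, and observe that, since $A\cup B$ is closed and $(D-x)\cap(A\cup B)=\big((D-x)\cap A\big)\cup\big((D-x)\cap B\big)$, it suffices to produce one Cantor set $D$ with $|(D-x)\cap A|\leq n$ and $|(D-x)\cap B|\leq m$ simultaneously for all $x$. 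Naive attempts fail: a disjoint union of Cantor subsets of $C_A$ and of a translate of $C_B$ need not control $(D-x)\cap A$, because the $C_B$-part may meet a translate of $A$ in arbitrarily many points (say, when $A$ contains a translated Cantor subset of $C_B$); and $D\subseteq C_A+C_B$ fails because a sumset of two Cantor sets is generically far too large. The construction of the common witness $D$ is \cite[Proposition~12.6]{1}, carried out there by replacing continuous maps $2^\omega\to X$ with compact ``glasses'' in $X$ and running a Baire-category/fusion argument in the hyperspace $K(X)$; the only thing left to check is the identity $[2^\omega]^{\leq n}\vee[2^\omega]^{\leq m}=[2^\omega]^{\leq n+m}$, where $\mathcal I\vee\mathcal J=\{I\cup J:\ I\in\mathcal I,\ J\in\mathcal J\}$, which holds because a set of size at most $n+m$ splits into one of size at most $n$ and one of size at most $m$.

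For the ``Moreover'' part, every member of $\sigma\big(\bigcup_n\overline{\h n}\big)$ has the form $\bigcup_k E_k$ with $E_k\in\overline{\h n_k}\subseteq\overline{\h\Fin}$, hence lies in $\sigma(\overline{\h\Fin})$, so it suffices to prove $\sigma(\overline{\h\Fin})\subseteq\h\Ctbl$. Because $\sigma\big([2^\omega]^{<\omega}\big)=[2^\omega]^{\leq\omega}$ (a countable union of finite sets is countable), and therefore $\h[2^\omega]^{\leq\omega}=\h\Ctbl$, this is precisely the instance $\mathcal I=[2^\omega]^{<\omega}$ of \cite[Corollary~13.4(3)]{1}: a countable union of closed Haar-$\mathcal I$ sets is Haar-$(\sigma\mathcal I)$. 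Concretely, from Cantor witnesses $D_k$ for $E_k\in\h\Fin$ the construction of \cite{1} produces a single Cantor set $D$ with $|(D-x)\cap E_k|<\omega$ for every $k$ and every $x$; then $(D-x)\cap\bigcup_k E_k=\bigcup_k\big((D-x)\cap E_k\big)$ is a countable union of finite sets, hence countable, and $D$ witnesses $\bigcup_k E_k\in\h\Ctbl$ by item $(b)$ of the characterization above.

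I expect the main obstacle, in both parts, to be exactly the same: constructing one Cantor set that simultaneously witnesses the prescribed (combined) smallness for two, respectively countably many, closed sets whose given witnesses are unrelated. As the failed naive attempts indicate, no elementary union/sum/graph construction achieves this, and the real work is the ``compact glasses'' technology of \cite{1}; accordingly I would cite \cite[Proposition~12.6 and Corollary~13.4(3)]{1} rather than reprove it, after verifying the two semi-ideal identities above and that $[2^\omega]^{\leq n}$ and $[2^\omega]^{<\omega}$ satisfy the mild definability hypotheses of \cite{1} (both are generated by compact subfamilies of $K(2^\omega)$).
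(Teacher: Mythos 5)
Your overall architecture matches the paper's: reduce both assertions to the existence of a single witness that works simultaneously for all the sets involved, delegate that existence to \cite{1}, and finish with trivial cardinality arithmetic. But you attribute to the cited results contents they do not have, at least as they are quoted in this paper, and in the second part this leaves a real gap. As quoted here, \cite[Corollary 13.4(3)]{1} says: every $\sigma$-compact Haar-$k$ set ($k\in\omega\setminus\{0\}$) is \emph{generically} Haar-$k$, i.e.\ the set of witnessing maps is comeager in $C(2^\omega,X)$; and \cite[Proposition 12.6]{1} says: the generically Haar-countable sets form an invariant $\sigma$-ideal. The first assertion then needs no fusion construction of a common Cantor set in $K(X)$ and no appeal to Proposition 12.6 at all: one simply intersects the two comeager witness sets in the Polish space $C(2^\omega,X)$ and notes $|f^{-1}[(A\cup B)+x]|\leq|f^{-1}[A+x]|+|f^{-1}[B+x]|\leq n+m$. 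Your version of this step is correct in spirit (a common witness does exist), but the mechanism and the attribution are not the ones actually available.

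The genuine gap is in the ``moreover'' part. You pass to the stronger inclusion $\sigma\overline{\h\Fin}\subseteq\h\Ctbl$ and justify it by reading Corollary 13.4(3) as ``a countable union of closed Haar-$\I$ sets is Haar-$\sigma\I$''. That is not the cited statement, and nothing quoted in this paper provides a genericity (or union-closure) principle for closed Haar-\emph{finite} sets; the genericity transfer is stated only for Haar-$k$ with $k\in\omega\setminus\{0\}$, and the paper explicitly treats the behaviour of countable unions of Haar-finite sets as open. So, as written, your argument for the second assertion rests on an unsupported principle and proves more than is needed without justification. The repair is to avoid Haar-finiteness altogether: each piece $E_k$ is closed and Haar-$n_k$, hence generically Haar-$n_k$ by Corollary 13.4(3), hence generically Haar-countable; now either intersect the countably many comeager witness sets directly in $C(2^\omega,X)$ (a countable union of finite fibers is countable), or, as the paper does, invoke Proposition 12.6 to conclude that $\bigcup_k E_k$ is generically Haar-countable and therefore Haar-countable.
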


\begin{proof}
By \cite[Corollary 13.4(3)]{1}, for an arbitrary $k\in\omega\setminus\{0\}$, every $\sigma$-compact Haar-$k$ subset of $X$ is generically Haar-$k$, i.e., the set of functions $f\colon 2^\omega\to X$ witnessing it is comeager in the space $C(2^\omega,X)$. 

For the first assertion, observe that the sets of functions witnessing that $A\in\h n$ as well as that $B\in\h m$ are comeager in $C(2^\omega,X)$. Take any function $f\colon 2^\omega\to X$ from the intersection of those two sets. Then 
$$|f^{-1}[(A\cup B)+x]|=|f^{-1}[A+x]\cup f^{-1}[B+x]|\leq |f^{-1}[A+x]|+|f^{-1}[B+x]|\leq n+m.$$
Thus, $A\cup B\in\overline{\h (n+m)}$.

For the second assertion, fix any $(A_n)_n\subseteq \bigcup_{n\in\omega}\overline{\h n}$. Then each $A_n$ is generically Haar-$k$ for some $k$, hence generically Haar-countable. By \cite[Proposition 12.6]{1}, the family of all generically Haar-countable sets is an invariant $\sigma$-ideal. In particular, $\bigcup_n A_n$ is generically Haar-countable.
\end{proof}

\begin{thm}[{\cite[Theorem 2.5]{DK}}]
\label{PackingDim}
Every compact subset of $\mathbb{R}$ with packing dimension less than $1$ is Haar-finite.
\end{thm}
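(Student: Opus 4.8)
The plan is to produce, for a given compact $K\subseteq\mathbb{R}$ with $\dim_P K<1$, a single Cantor set $D\subseteq\mathbb{R}$ such that $(D+x)\cap K$ is finite for every $x\in\mathbb{R}$, which by condition $(b)$ of the characterization established above (with $x$ ranging over all of $\mathbb{R}$) is exactly the assertion that the (closed, hence Borel) set $K$ is Haar-finite. The first step is to replace the packing-dimension hypothesis by the box-dimension decomposition defining it: fix $s<1$ with $\dim_P K<s$ and write $K=\bigcup_{n\in\omega}K_n$ with each $K_n$ compact, $K_n\subseteq K_{n+1}$ and $\overline{\dim}_B K_n<s$, so that for each $n$ there is a constant $C_n$ with $N_\delta(K_n)\le C_n\delta^{-s}$ for all $\delta\in(0,1]$, where $N_\delta(\cdot)$ counts the intervals of a fixed $\delta$-mesh that meet the set.

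The second step is to build $D=\bigcap_k D_k$ by a Cantor scheme $\{J_\sigma:\sigma\in 2^{<\omega}\}$ in which all level-$k$ intervals share a common length $e_k$, the two children of each $J_\sigma$ lie at the two ends of $J_\sigma$, and the ratios $e_{k+1}/e_k$ tend to $0$ so fast that the gap between any two distinct level-$k$ intervals --- the least, between siblings, equal to $e_{k-1}-2e_k$ --- exceeds $e_k$. The parameters are chosen by a diagonal recursion: the levels are grouped into consecutive blocks, block $n$ being responsible for $K_n$, and inside block $n$ the lengths and placements are chosen using the estimate $N_{e_k}(K_n)\le C_n e_k^{-s}$. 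The elementary geometric fact underpinning everything is that each $e_k$-interval of an optimal $e_k$-cover of $K_n$, being shorter than the gaps of $D_k$, meets at most two of the $2^k$ level-$k$ intervals, so that for every $x\in\mathbb{R}$ at most $2C_n e_k^{-s}$ of the $2^k$ translated intervals $J_\sigma+x$ meet $K_n$.

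The third step is the verification. Fix $x$ and identify $(D+x)\cap K$ with the set of infinite branches of the tree $T_x=\{\sigma\in 2^{<\omega}:(J_\sigma+x)\cap K\ne\emptyset\}$; this is the standard identification for Cantor schemes and uses compactness of $K$ --- a branch all of whose initial segments lie in $T_x$ produces a point of $K$, and conversely. After pruning $T_x$ to the nodes that have an infinite extension in it, the intersection $(D+x)\cap K$ is finite as soon as only finitely many nodes are branching (i.e. retain both children), and a branching node at level $k$ must have each of its two end-subintervals, of length $e_{k+1}$, meeting $K$, hence meeting some $K_n$. The recursion of the second step is arranged so that, for each $n$, only finitely many branchings are attributable to $K_n$ and, for each fixed $x$, all branching of $T_x$ stops past a level depending on $x$; hence $(D+x)\cap K$ is finite, as required.

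I expect the real obstacle to be precisely this last point. Two features make it delicate: a single Cantor set must work against every translate $x$ and every $K_n$ at once, so $D$ can never be positioned to dodge $K$; and merely controlling the number of active nodes at each level is not enough, because a very thin subtree of $2^{<\omega}$ can still carry continuum-many branches, so the recursion must also force active nodes to become dead ends farther down --- that is, it must prevent a translate of $K_n$ from meeting both end-subintervals at infinitely many levels. Tuning the decay rate of $e_{k+1}/e_k$ and the block lengths so that this genuinely holds for all $x$ simultaneously is the technical heart of the argument.
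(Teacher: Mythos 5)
First, a remark on scope: the paper does not prove this statement at all --- it is imported verbatim from Darji and Keleti \cite{DK} --- so there is no internal proof to measure yours against. Judged on its own terms, your write-up is an accurate plan of the standard attack (the packing-to-upper-box decomposition $K=\bigcup_n K_n$ with $\overline{\dim}_B K_n<s<1$, a Cantor scheme with rapidly decaying lengths, and the observation that an interval shorter than the level-$k$ gaps meets at most two level-$k$ pieces, giving the bound $2C_ne_k^{-s}$ on the number of active nodes), but it is not a proof: the step you yourself flag as ``the technical heart'' is the entire content of the theorem, and you supply no mechanism for it. Concretely, $2C_ne_k^{-s}\to\infty$, so no bookkeeping of tree widths can certify that the pruned tree $T_x'$ has finitely many branches: the number of branches is $\sup_k|T_x'\cap 2^k|$, and all your estimates are divergent upper bounds. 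What is actually needed --- and what Darji--Keleti, as well as the analogous arguments in this paper (the proof of Theorem \ref{one-third}, Lemma \ref{lem2}, Step 4 of Theorem \ref{not ideal}), supply --- is a positive combinatorial property of $D$: at each scale $\delta$ one exhibits roughly $\delta^{-s}$ translation amounts forming an arithmetic progression of gap $\delta$ whose corresponding translates of $K_n$ have empty common intersection (because $K_n$ misses at least one cell of every $\delta$-mesh window of length about $C_n\delta^{1-s}$, so it contains no $\delta$-spaced progression of that many terms), and $D$ is built so that every sufficiently large finite subset of $D$ realizes such a configuration at some scale. Nothing in your recursion produces this; ``forcing active nodes to become dead ends'' is named as a goal, not achieved.

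Second, even granting that each $K_n$ is handled, your blocks ``responsible for $K_n$'' would only yield that $(D+x)\cap K_n$ is finite for every $n$, whence $(D+x)\cap K$ is countable --- not finite. At every level $k$ there remain infinitely many $K_m$ whose constants $C_m$ have not yet been addressed, and these may cause branching at that level; your assertion that ``all branching of $T_x$ stops past a level depending on $x$'' is exactly where the union problem hides, and it is stated without argument. Some localization is required: one must show that once the first splitting level $\tilde{k}$ of an infinite subset of $D$ is known, only finitely many of the pieces $K_n$ can contribute to $\bigcap_{d}(K-d)$ (compare how Step 4 of the proof of Theorem \ref{not ideal} reduces the intersection to $\bigcup_{l\leq\tilde{k}}X_l$ before invoking a Haar-$n$ witness). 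Without such a reduction the proposed argument proves at best that $K$ is Haar-countable.
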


The following series of results shows that all results of this paper can be applied to abelian Polish groups of the form $\R\times X$, where $X$ is an arbitrary closed abelian group. In particular, this class contains spaces $C(K)$, where $K$ is a compact metrizable space. 

\begin{prop}
Let $X$ and $Y$ be two nonempty abelian Polish groups. Fix a semi-ideal $\I$ on $2^\omega$ and $A\subseteq X$.
\begin{itemize}
	\item[(i)] If $A\in\h\I$ ($\overline{\h\I}$), then $A\times Y\in\h\I$ ($\overline{\h\I}$).
	\item[(ii)] If $A\notin\h\I$ ($\overline{\h\I}$), then $A\times Y\notin\h\I$ ($\overline{\h\I}$).
	\item[(iii)] If $A\in\sigma\overline{\h\I}$, then $A\times Y\in\sigma\overline{\h\I}$.
\end{itemize}
\end{prop}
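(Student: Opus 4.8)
The plan is to prove each of the three items by transferring a witness (a continuous map $f\colon 2^\omega\to X$, or a $\sigma$-compact cover) from $X$ to $X\times Y$ in the cheapest possible way, namely by composing with (or pre-composing by) a fixed element $y_0\in Y$; the key observation is that translates in $X\times Y$ decompose coordinatewise, so that $(B\times Y)+(x,y)=(B+x)\times Y$ and hence the preimage of a translate of $B\times Y$ under a map into the slice $X\times\{y_0\}$ is exactly the preimage of the corresponding translate of $B$.

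For item (i), suppose $A\in\h\I$, witnessed by a Borel hull $B\supseteq A$ and a continuous $f\colon 2^\omega\to X$ with $f^{-1}[B+x]\in\I$ for all $x\in X$. Fix any $y_0\in Y$ and define $g\colon 2^\omega\to X\times Y$ by $g(z)=(f(z),y_0)$; this is continuous, and $B\times Y$ is a Borel hull of $A\times Y$. For $(x,y)\in X\times Y$ we compute $g^{-1}[(B\times Y)+(x,y)]=g^{-1}[(B+x)\times(Y+y)]=g^{-1}[(B+x)\times Y]=f^{-1}[B+x]\in\I$, where the middle equality uses $Y+y=Y$. Hence $A\times Y\in\h\I$. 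For the closed version, if moreover $A$ is closed then $A\times Y$ is closed, so the same argument gives $A\times Y\in\overline{\h\I}$.

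For item (ii) I would argue by contraposition using item (i) applied in the other direction: assume $A\times Y\in\h\I$, witnessed by a Borel hull $B'\supseteq A\times Y$ and a continuous $f'\colon 2^\omega\to X\times Y$. Let $\pi_X\colon X\times Y\to X$ be the projection and $B=\pi_X[B']$ — but $\pi_X[B']$ need not be Borel, so instead I would take $B$ to be a Borel hull of $\pi_X[B']$, or more carefully use that $A\subseteq\pi_X[B']$ and work with the (continuous) map $h=\pi_X\circ f'\colon 2^\omega\to X$. The point is that for $x\in X$, $h^{-1}[B+x]\subseteq h^{-1}[\pi_X[B']+x]\subseteq f'^{-1}[B'+(x,0)]\in\I$ (the last inclusion because $(p,q)\in B'+(x,0)$ forces $p\in\pi_X[B']+x$), and since $\I$ is a semi-ideal this gives $h^{-1}[B+x]\in\I$; choosing $B$ to be any Borel set with $A\subseteq B\subseteq\pi_X[B']$-hull works, so $A\in\h\I$. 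The closed case is analogous, noting that if $A\times Y$ is closed and $Y\neq\emptyset$ then $A$ is closed (it is a slice of a closed set). The main subtlety here is the Borel-hull bookkeeping: one must be careful that the projection of a Borel set is only analytic, so the hull should be taken on the $X$-side after projecting, and the containments above must be checked to survive this.

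For item (iii), suppose $A\subseteq\bigcup_{n\in\omega}F_n$ with each $F_n$ closed and in $\h\I$. Then $A\times Y\subseteq\bigcup_{n\in\omega}(F_n\times Y)$, and by item (i) each $F_n\times Y$ is closed and in $\h\I$, i.e., in $\overline{\h\I}$; hence $A\times Y\in\sigma\overline{\h\I}$. I expect items (i) and (iii) to be essentially immediate, so the only real obstacle is the descriptive-set-theoretic care needed in item (ii) to handle the non-Borelness of projections, which is routine but must be spelled out.
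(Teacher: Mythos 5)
Items (i) and (iii) of your proposal are fine and coincide with the paper's argument: embed $2^\omega$ into a slice $X\times\{y_0\}$ via $z\mapsto(f(z),y_0)$, note $(B\times Y)+(x,y)=(B+x)\times Y$, and deduce (iii) immediately from (i). The genuine gap is in item (ii). The inclusion you rely on, $h^{-1}[\pi_X[B']+x]\subseteq f'^{-1}[B'+(x,0)]$, is false; your parenthetical justification ("$(p,q)\in B'+(x,0)$ forces $p\in\pi_X[B']+x$") proves exactly the \emph{reverse} inclusion. From $\pi_X(f'(\alpha))-x\in\pi_X[B']$ you only get that $(\pi_X(f'(\alpha))-x,q)\in B'$ for \emph{some} $q$, whereas membership in $f'^{-1}[B'+(x,0)]$ requires it for the specific second coordinate $\pi_Y(f'(\alpha))$. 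Concretely, in $\R\times\R$ let $A=\{0\}$, $B'=(\{0\}\times\R)\cup(\R\times\{5\})$ (a Borel hull of $A\times\R$) and $f'(\alpha)=(g(\alpha),0)$ with $g$ a homeomorphism of $2^\omega$ onto the ternary Cantor set: then $\pi_X[B']=\R$, so $h^{-1}[\pi_X[B']+x]=2^\omega$, while $f'^{-1}[B'+(x,0)]$ has at most one point. Projecting the hull destroys all the smallness, and no Borel $B$ with $A\subseteq B\subseteq$ (a hull of) $\pi_X[B']$ can repair the chain. A genuinely fixable contrapositive route would instead need a Borel $B$ with $A\subseteq B$ and $B\times Y\subseteq B'$; the canonical candidate $\{x\in X:\ \{x\}\times Y\subseteq B'\}$ is only co-analytic, so one would have to invoke Luzin separation (and assume $A$ analytic), which is more machinery than the statement needs.

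The paper proves (ii) in the other direction, which avoids all of this: take an \emph{arbitrary} continuous $\psi\colon 2^\omega\to X\times Y$ and project it, $\phi=\pi_X\circ\psi$; since $A\notin\h\I$, there is $r\in X$ with $\phi^{-1}[A+r]\notin\I$, and then for every $r'\in Y$ one has $\phi^{-1}[A+r]\subseteq\psi^{-1}[(A\times Y)+(r,r')]\subseteq\psi^{-1}[B'+(r,r')]$ for any Borel hull $B'\supseteq A\times Y$ (if $\phi(\alpha)\in A+r$ then $\psi(\alpha)\in(A+r)\times Y=(A\times Y)+(r,r')$). Since $\I$ is closed under subsets, $\psi^{-1}[B'+(r,r')]\notin\I$, so no pair $(B',\psi)$ can witness $A\times Y\in\h\I$. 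Note that in this direction the hull only enlarges the preimage, which is exactly harmless when the goal is non-membership in $\I$; the remaining hull bookkeeping (passing from "$A\notin\h\I$" to "for every continuous $\phi$ there is $r$ with $\phi^{-1}[A+r]\notin\I$") is immediate for Borel $A$, which covers all uses in the paper. So you should replace your contrapositive projection-of-the-hull argument for (ii) by this direct projection-of-the-map argument.
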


\begin{proof}
Note that $A$ is closed in $X$ if and only if $A\times Y$ is closed in $X\times Y$. Therefore, we only need to prove the assertions (i) and (ii) for $\h\I$.

{\bf (i): }Let $\phi\colon 2^\omega\to X$ be a witness for $A\in\h\I$. Fix any $y\in Y$ and define $\psi\colon 2^\omega\to X\times Y$ by $\psi(\alpha)=(\phi(\alpha),y)$ for each $\alpha\in 2^\omega$. Then $\psi$ is continuous and for each $(r,r')\in X\times Y$ we have $\psi^{-1}[(A\times Y)+(r,r')]=\phi^{-1}[A+r]\in\I$. 

{\bf (ii): }Let $\psi\colon 2^\omega\to X\times Y$ be continuous. Then $\phi\colon 2^\omega\to X$ given by $\phi(\alpha)=\pi_{X}(\psi(\alpha))$, where $\pi_X\colon X\times Y\to X$ is the projection map, is continuous as well. Hence, there is $r\in X$ with $\phi^{-1}[A+r]\notin\I$. Then we have $\psi^{-1}[(A\times Y)+(r,r')]\supseteq\phi^{-1}[A+r]\notin\I$ for each $r'\in Y$.

{\bf (iii): }This is an immediate consequence of item (i).
\end{proof}

\begin{prop}
Fix two nonempty abelian Polish groups $X$ and $Y$. Let $\I$ stand for $[2^\omega]^{\leq\omega}$, $[2^\omega]^{<\omega}$ or $[2^\omega]^{\leq n}$ for some $n\in\omega$. Suppose that $A\subseteq X$ is closed and such that $A\cap U\notin\h\I$ for each open $U\subseteq X$ with $A\cap U\neq\emptyset$. Then $A\times Y\notin\sigma\overline{\h\I}$.
\end{prop}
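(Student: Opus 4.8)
The plan is to argue by contradiction, using a Baire category argument against the closed set $A\times Y$. Suppose $A\times Y\in\sigma\overline{\h\I}$, so we can write $A\times Y\subseteq\bigcup_{k\in\omega}F_k$, where each $F_k$ is closed in $X\times Y$ and $F_k\in\h\I$. Intersecting with the Polish space $A\times Y$ (which is itself Polish, being closed in $X\times Y$), the Baire category theorem gives some $k$ and some nonempty relatively open set $V\subseteq A\times Y$ with $V\subseteq F_k$. Shrinking, we may take $V=(U\times W)\cap(A\times Y)=(A\cap U)\times W$ for basic open $U\subseteq X$, $W\subseteq Y$ with $A\cap U\neq\emptyset$ and $W\neq\emptyset$. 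Hence $(A\cap U)\times W\subseteq F_k\in\h\I$, and since $\h\I$ is a semi-ideal, $(A\cap U)\times W\in\h\I$.

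The next step is to deduce $A\cap U\in\h\I$ from $(A\cap U)\times W\in\h\I$, which would contradict the hypothesis that $A\cap U\notin\h\I$ and finish the proof. The issue is that $W$ is merely a nonempty open subset of $Y$, not all of $Y$, so part (ii) of the previous proposition does not apply directly. However, since $Y$ is a nonempty abelian Polish group, we may translate $W$ so that $0\in W$; then $(A\cap U)\times\{0\}\subseteq (A\cap U)\times W$, so it suffices to see $(A\cap U)\times\{0\}\in\h\I$ implies $A\cap U\in\h\I$. This is immediate: if $\psi\colon 2^\omega\to X$ is continuous and witnesses $(A\cap U)\times\{0\}\in\h\I$ when composed into $X\times\{0\}$, then $\pi_X\circ\psi$ witnesses $A\cap U\in\h\I$, exactly as in the proof of part (ii); more simply, $X\times\{0\}$ is a closed subgroup of $X\times Y$ topologically isomorphic to $X$, and $\h\I$ is preserved under topological group isomorphisms by the first cited theorem. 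Wait — one must be a little careful: a witness $\phi\colon 2^\omega\to X$ for $A\cap U\in\h\I$ inside $X$ need not be a witness inside $X\times Y$, but that direction is not what we need; we need the reverse implication, which follows from part (ii) of the previous proposition applied with the closed subgroup $X\times\{0\}\cong X$ in the role of ``$X$'' — or one can simply invoke the argument of part (ii) verbatim.

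Assembling, the chain is: $A\times Y\in\sigma\overline{\h\I}$ $\Rightarrow$ Baire category yields a relatively open $(A\cap U)\times W\subseteq F_k\in\h\I$ with $A\cap U\neq\emptyset$ $\Rightarrow$ (translating $W$ to contain $0$ and restricting) $(A\cap U)\times\{0\}\in\h\I$ $\Rightarrow$ $A\cap U\in\h\I$, contradicting the hypothesis. The only genuinely delicate point is the first implication: we must make sure the Baire category theorem is applied to the Polish space $A\times Y$ and not to $X\times Y$, so that the relatively open witness actually meets $A\times Y$ nontrivially in the required product form $(A\cap U)\times W$; this is routine since products of basic open sets form a basis for $X\times Y$. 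Everything else is a direct appeal to the semi-ideal property of $\h\I$ and to the isomorphism-invariance and the argument of the preceding proposition.
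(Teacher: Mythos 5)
Your first half is fine and matches the paper: writing $A\times Y\subseteq\bigcup_k F_k$ with $F_k$ closed Haar-$\I$, applying the Baire category theorem inside the Polish space $A\times Y$, and extracting $\emptyset\neq(A\cap U)\times W\subseteq F_k$, hence $(A\cap U)\times W\in\h\I$ by the semi-ideal property. The gap is in the second half: the implication ``$(A\cap U)\times\{0\}\in\h\I$ in $X\times Y$ $\Rightarrow$ $A\cap U\in\h\I$ in $X$'' is simply false, so shrinking the open set $W$ to the single point $0$ destroys the argument. Indeed, any set of the form $S\times\{0\}$ with $S\subseteq X$ is Haar-$1$ in $X\times Y$ whenever $Y$ is uncountable: take a Cantor set $C\subseteq Y$, fix $x_0\in X$, and let $f(\alpha)=(x_0,c(\alpha))$ with $c\colon 2^\omega\to C$ a homeomorphism; then for every $(x,y)$ the preimage $f^{-1}[(X\times\{0\})+(x,y)]=c^{-1}[\{y\}]$ has at most one point, and $X\times\{0\}$ is a Borel hull of $S\times\{0\}$. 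So, e.g., $[0,1]\times\{0\}$ is Haar-$1$ in $\R^2$ although $[0,1]$ is not even Haar-null in $\R$. The reason your two justifications do not apply is that Haar-$\I$ness depends on the ambient group (both the witness map and the translations range over it): item (ii) of the preceding proposition concerns the full cylinder $A\times Y$, not the slice $A\times\{0\}$, and the isomorphism-invariance theorem applies to isomorphisms of the whole group, not to embeddings of the closed subgroup $X\times\{0\}$ into $X\times Y$; being Haar-$\I$ in the big group does not imply being Haar-$\I$ relative to a closed subgroup.

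The openness of $W$ is exactly what must be retained. The paper's proof shows directly that no set containing $(A\cap U)\times W$ can be Haar-$\I$: given an arbitrary continuous $\psi\colon 2^\omega\to X\times Y$, it first chooses $y\in Y$ with $\psi_Y^{-1}[W+y]\neq\emptyset$; this preimage is \emph{open}, hence contains a nonempty clopen set $B\cong 2^\omega$. Restricting to $B$ and projecting to $X$ gives a continuous map $\phi\colon 2^\omega\to X$, to which the hypothesis $A\cap U\notin\h\I$ applies, producing $x$ with $\phi^{-1}[(A\cap U)+x]\notin\I$; all these points of $B$ are sent by $\psi$ into $((A\cap U)\times W)+(x,y)\subseteq F_k+(x,y)$, so $\psi^{-1}[F_k+(x,y)]\notin\I$. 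With a singleton in place of $W$ this argument (and, as the counterexample shows, the statement you need) breaks down, because the witness $\psi$ may live essentially in the $Y$-direction. To repair your write-up, drop the translation-to-$\{0\}$ step and run this two-coordinate argument on $(A\cap U)\times W$ itself.
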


\begin{proof}
Suppose to the contrary that $A\times Y=\bigcup_{n\in\omega}Z_n$ for some sequence of closed Haar-$\I$ sets $(Z_n)_n$. As $A\times Y$ is closed, by the Baire's category theorem, we can find $m\in\omega$ such that $Z_m$ has nonempty interior in the space $A\times Y$ (with the subspace topology). Hence, there are open sets $U\subseteq X$ and $V\subseteq Y$ with $\emptyset\neq(A\cap U)\times V\subseteq Z_m$. We will show that $Z_m\notin\h\I$ which is a contradiction.

Let $\psi\colon 2^\omega\to X\times Y$ be continuous. Consider the continuous function $\psi_Y\colon 2^\omega\to Y$ given by $\psi_Y(\alpha)=\pi_{Y}(\psi(\alpha))$, where $\pi_Y$ is the projection map. Find any $y\in Y$ with $\psi_Y^{-1}[V+y]\neq\emptyset$. Then $\psi_Y^{-1}[V+y]$, as an open set, must contain a nonempty clopen set $B\subseteq 2^\omega$. 

Note that $B$ is homeomorphic to $2^\omega$ and denote this homeomorphism by $\sigma\colon 2^\omega\to B$. Consider $\phi\colon 2^\omega\to X$ given by $\phi(\alpha)=\pi_X(\psi(\sigma(\alpha)))$, where $\pi_X$ is the projection map. Note that $\phi$ is continuous and $A\cap U\neq\emptyset$ (otherwise $(A\cap U)\times V$ would be empty). Thus, there is $x\in X$ with $\phi^{-1}[(A\cap U)+x]\notin\I$. Then we have $\psi^{-1}[Z_m+(x,y)]\notin\I$ as 
$$|\psi^{-1}[((A\cap U)\times V)+(x,y)]|\geq|B\cap\psi^{-1}[\pi_X^{-1}[(A\cap U)+x]]|=|\phi^{-1}[(A\cap U)+x]|.$$
\end{proof}

\begin{remark}
\label{sigma}
Note that for a closed set $A\subseteq\R$ the assumption $A\cap I\notin\h\I$, for each open interval $I\subseteq\R$ with $A\cap I\neq\emptyset$, implies $A\notin\sigma\overline{\h\I}$. This is a simple consequence of the Baire's category theorem (actually, in the case of the considered semi-ideals it also follows from the previous proposition -- it suffices to take any $Y$ of cardinality $1$). We will often refer to this basic observation in our further considerations. 
\end{remark}

\section{Haar-$1$ sets}

In this section we study Haar-$1$ sets -- we show that all countable subsets are Haar-$1$ and give an example of an uncountable Haar-$1$ subset of the real line.

\begin{prop}
\label{przeliczalne}
Let $X$ be an abelian Polish group. All countable subsets of $X$ are Haar-$1$, i.e., $[X]^{\leq\omega}\subseteq\h 1$.
\end{prop}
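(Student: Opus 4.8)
The plan is to use characterization $(c)$ from the Proposition above: for a Borel set $B \subseteq X$, it suffices to produce a Cantor set $D \subseteq X$ such that whenever $S \subseteq D$ has at least two elements, we have $\bigcap_{s \in S}(B - s) = \emptyset$; equivalently, for any two distinct $d_1, d_2 \in D$, the sets $B - d_1$ and $B - d_2$ are disjoint, i.e.\ $(B + d_1) \cap (B + d_2) = \emptyset$, i.e.\ no element of $X$ differs by $d_1 - d_2$ from another element of $B$. So fixing a countable set $A = \{a_n : n \in \omega\}$, I want a Cantor set $D$ such that for all distinct $d, d' \in D$ and all $m, n \in \omega$, $a_m - a_n \neq d - d'$; in other words the difference set $D - D$ meets the countable set $A - A$ only in $0$.

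First I would reduce to finding a Cantor set $D$ with $(D - D) \cap (A - A) = \{0\}$. Since $A - A$ is countable, this is a standard independence/avoidance construction. The cleanest route is a Cantor scheme: build a tree of nonempty open sets $(U_s)_{s \in 2^{<\omega}}$ with $\overline{U_{s^\frown i}} \subseteq U_s$, shrinking diameters to $0$ (using a compatible complete metric on $X$), and $\overline{U_{s^\frown 0}} \cap \overline{U_{s^\frown 1}} = \emptyset$, so that $D = \bigcap_n \bigcup_{s \in 2^n} U_s$ is a Cantor set. At level $n$, having already split the $2^n$ pieces, I enumerate the finitely many "forbidden differences" that need to be avoided at this stage — each $a \in A$ indexed by some bookkeeping, paired with the finitely many pairs of currently-active nodes — and shrink each $U_s$ slightly so that for the relevant pairs $(s, t)$ we have $(U_s - U_t) \cap \{a\} = \emptyset$ for the finitely many $a$'s enumerated so far. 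This is possible at each step because $X$ is a topological group: if $d \in U_s$, $d' \in U_t$, $d - d' = a$ is a single undesired value, one can shrink $U_s$ and $U_t$ to avoid it (either $d \neq d'$ eventually forces a separation if $a \neq 0$, using that the translation $x \mapsto x + a$ is a homeomorphism and $U_s, U_t$ can be taken with $\overline{U_s} \cap (\overline{U_t} + a) = \emptyset$; and if $a = 0$ this is automatic once the pieces are already disjoint). Iterating over all of $A$ via a fair enumeration ensures every forbidden difference is eventually killed.

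Then $D$ is the desired witness: if $d, d' \in D$ are distinct, then $d, d'$ lie in disjoint level-$n$ pieces $U_s, U_t$ for some $n$, and for every $a_m - a_n$ we arranged $d - d' \neq a_m - a_n$, so $(A + d) \cap (A + d') = \emptyset$, hence any $S \subseteq D$ with $|S| \geq 2$ gives $\bigcap_{s \in S}(A - s) = \emptyset$. By part $(c)$ of the Proposition (with $\alpha$ being $\leq 1$, noting $[X]^{\leq 1} = [X]^{\leq 1}$), $A \in \h[2^\omega]^{\leq 1} = \h 1$, and of course $A$ itself serves as its own Borel hull.

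The main obstacle is the group-theoretic geometry in the inductive step: I need to be sure that for any two disjoint small open sets and any nonzero $a \in X$ I can further shrink them to make $\overline{U_s} \cap (\overline{U_t} + a)$ empty while keeping both nonempty and preserving the earlier constraints. This is where one uses that $X$ is Hausdorff and that translation is a homeomorphism — pick a point $d \in U_s$; if $d + a' \notin \overline{U_t} + a$ is possible choose such, otherwise one must be slightly more careful, but since there are only finitely many constraints and each excluded set is closed with empty interior relative to the relevant piece (a single translate of a point, or finitely many of them, when intersected appropriately), a routine diagonalization inside each $U_s$ succeeds. I would write this step out carefully since it is the only non-formal part; everything else is bookkeeping on a Cantor scheme.
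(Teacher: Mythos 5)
Your argument is correct, but it reaches the goal by a genuinely different route than the paper. Both proofs reduce the statement to the same combinatorial core: produce a Cantor set $D$ with $(D-D)\cap(A-A)=\{0\}$, which by criterion (c) (with $\alpha$ being $\leq 1$, and the countable set $A$ serving as its own Borel hull) yields $A\in\h 1$. The paper obtains such a $D$ abstractly: it introduces the symmetric $\mathtt{F_\sigma}$ relation $(x,y)\in R\Leftrightarrow x-y\in A-A$, builds an $R$-independent set of size $\cc$ by transfinite induction (at each stage only countably many points are forbidden), and then invokes Kubi\'{s}'s theorem \cite[Corollary 2.3]{Kubis} to upgrade this to a Cantor $R$-independent set. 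You instead build $D$ by hand via a Cantor scheme, shrinking the open pieces level by level so that difference sets of distinct pieces avoid the finitely many elements of $(A-A)\setminus\{0\}$ enumerated so far. The inductive step you flag as the only delicate point is indeed routine and can be done cleanly: given nonempty open $U_s,U_t$ and nonzero forbidden values $a_1,\dots,a_k$, pick $q\in U_t$, then $p\in U_s\setminus\{q+a_1,\dots,q+a_k\}$ (possible because nonempty open subsets of an uncountable Polish group are infinite --- this is where the standing assumption that $X$ is uncountable enters), and use continuity of $(x,y)\mapsto x-y$ to find an open box $V\times W\subseteq U_s\times U_t$ with $(V-W)\cap\{a_1,\dots,a_k\}=\emptyset$; only finitely many ordered pairs of nodes need attention at each level, and all constraints persist under shrinking, so a fair enumeration finishes the job. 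What each approach buys: yours is more elementary and self-contained (no perfect-clique theorem, no transfinite induction), at the price of explicit bookkeeping; the paper's is shorter given the cited result and applies verbatim to any symmetric $\mathtt{F_\sigma}$ relation admitting an independent set of size $\cc$, not only to relations arising from a countable difference set.
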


\begin{proof}
We will use \cite[Corollary 2.3]{Kubis} stating that for a symmetric $\mathtt{F_\sigma}$ relation $R$ on $X$ with an $R$-independent set of size $\cc$ we can always find a Cantor $R$-independent set.

Let $\emptyset\neq A\subseteq X$ be countable. Define a relation $R\subseteq X\times X$ by: 
$$(x,y)\in R \Leftrightarrow x-y\in A-A$$
for all $x,y\in X$. Note that it is reflexive and symmetric. Moreover, as $A-A$ is countable, it is easy to check that $R$ is $\mathtt{F_\sigma}$. 

We will show that there is an $R$-independent set (i.e., $(x,y)\notin R$ for each two distinct elements $x$ and $y$ of that set) of cardinality $\cc$. Indeed, by transfinite induction one can construct $x_\alpha$, for $\alpha<\cc$, satisfying $x_\alpha\notin\bigcup_{\beta<\alpha}\{y\in X:\ (y,x_\beta)\in R\}$ (as $\{y\in X:\ (y,x_\beta)\in R\}$ is countable for each $\beta<\alpha$). Then $X=\{x_\alpha:\ \alpha<\cc\}$ is the required set. 

By \cite[Corollary 2.3]{Kubis}, there is a Cantor set $D\subseteq X$ with $(x,y)\notin R$ for all $x,y\in D$, $x\neq y$. We claim that $D$ witnesses $A\in\h 1$. Suppose to the contrary that $|(D-r)\cap A|>1$ for some $r\in X$. Hence, there are $x,y\in D$, $x\neq y$, with $x-r,y-r\in A$. But then $(x,y)\in R$, since $(x-r)-(y-r)=x-y\in A-A$. A contradiction.
\end{proof}

Now we will give an example of a compact uncountable Haar-$1$ set. This will follow from the following more general result (see also \cite[Section 2]{BLR} and \cite[Proposition 3.1]{Balcerzak}).

\begin{prop}[{\cite[Remark 3.3]{Zakrzewski}}]
\label{null-1}
Let $A$ be a compact subset of an abelian Polish group $X$. The following are equivalent:
\begin{enumerate}
\item $A$ is Haar-$1$;
\item $A-A$ does not contain a neighborhood of the neutral element of $X$;
\item $A-A$ has empty interior.
\end{enumerate}
\end{prop}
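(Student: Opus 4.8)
The plan is to prove the cycle $(3)\Rightarrow(1)\Rightarrow(2)\Rightarrow(3)$, noting first that $(2)\Leftrightarrow(3)$ is essentially trivial for compact $A$: the set $A-A$ is the continuous image of the compact set $A\times A$, hence compact, hence closed, so it has empty interior if and only if it contains no neighborhood of the neutral element $0$. Thus the real content is the equivalence of $(1)$ with these conditions.

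For $(3)\Rightarrow(1)$, I would invoke Proposition~\ref{przeliczalne}'s circle of ideas via the cited result \cite[Corollary 2.3]{Kubis}: define the $\mathtt{F_\sigma}$ symmetric reflexive relation $R$ on $X$ by $(x,y)\in R\Leftrightarrow x-y\in A-A$ (this is $\mathtt{F_\sigma}$ because $A-A$ is compact, in particular closed). The key point is to produce an $R$-independent set of size $\cc$. Here I would use the assumption that $A-A$ has empty interior: since $A-A$ is closed with empty interior, it is nowhere dense, and in fact a meager (indeed Haar-null, using that $X$ is not locally compact in general we instead argue topologically) set, so its countable-or-$\cc$-many translates cannot exhaust $X$; more carefully, one builds $x_\alpha$ for $\alpha<\cc$ by transfinite recursion with $x_\alpha\notin\bigcup_{\beta<\alpha}(x_\beta+(A-A))$, which is possible because at each stage fewer than $\cc$ translates of a nowhere dense closed set do not cover the Polish group $X$ (by Baire category, $X$ is not a union of $<\cc$ nowhere dense sets only if we know $\mathrm{cov}(\M)=\cc$, so I should instead note that $X$ has cardinality $\cc$ and each set $x_\beta+(A-A)$, being nowhere dense, has nonempty complement, and use that the union of $<\cc$ nowhere dense sets is not all of $X$ — this does require an argument, see below). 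Then \cite[Corollary 2.3]{Kubis} yields a Cantor set $D\subseteq X$ that is $R$-independent, and exactly as in Proposition~\ref{przeliczalne} this $D$ witnesses $A\in\h 1$: if $|(D-r)\cap A|\geq 2$ for some $r$ with witnesses $x\neq y$ in $D$, then $x-y=(x-r)-(y-r)\in A-A$, contradicting $R$-independence.

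For $(1)\Rightarrow(2)$, argue contrapositively: suppose $A-A\supseteq V$ for some open neighborhood $V$ of $0$. Let $\phi\colon 2^\omega\to X$ be any continuous map; I must find $r\in X$ with $|\phi^{-1}[A+r]|\geq 2$, equivalently with $\phi(\alpha)-r,\phi(\beta)-r\in A$ for some $\alpha\neq\beta$, equivalently $\phi(\alpha)-\phi(\beta)\in A-A$ for some $\alpha\neq\beta$. If $\phi[2^\omega]$ is finite this is immediate (take $r=\phi(\alpha)-a$ for any $a\in A$, with $\alpha,\beta$ in an infinite fiber). If $\phi[2^\omega]$ is infinite, pick a nontrivial convergent sequence $\phi(\alpha_n)\to p$ with the $\phi(\alpha_n)$ distinct; then $\phi(\alpha_n)-\phi(\alpha_m)\to 0$ as $n,m\to\infty$, so eventually $\phi(\alpha_n)-\phi(\alpha_m)\in V\subseteq A-A$, giving the two preimages. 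Hence no continuous $\phi$ witnesses $A\in\h 1$, so $A\notin\h 1$.

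The main obstacle I anticipate is the set-theoretic step in $(3)\Rightarrow(1)$: justifying that fewer than $\cc$ translates of the closed nowhere dense set $A-A$ cannot cover $X$, which is needed to run the transfinite recursion building the $R$-independent set of size $\cc$. This is not a consequence of plain Baire category (that only rules out \emph{countably} many), and the cardinal invariant $\mathrm{cov}(\M)$ can consistently be less than $\cc$. The fix is to not demand genuine $R$-independence of a full size-$\cc$ set from the outset, but to observe that $A-A$ being closed with empty interior makes $R$ a meager (in fact Haar-meager) relation, and then to apply the Mycielski-type / Kuratowski-Mycielski theorem directly: a symmetric $\mathtt{F_\sigma}$ (indeed closed) relation whose sections are nowhere dense admits a perfect independent set. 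In other words I would replace \cite[Corollary 2.3]{Kubis} by the classical Mycielski theorem, whose hypothesis "each vertical section is meager" is met because the section of $R$ at $x$ is $x+(A-A)$, a closed nowhere dense set. This bypasses the cardinal arithmetic entirely and cleanly produces the required Cantor $R$-independent set $D$, completing $(3)\Rightarrow(1)$.
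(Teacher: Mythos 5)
Your two substantive implications are fine: $(3)\Rightarrow(1)$ via the Kuratowski--Mycielski theorem applied to the closed relation $\{(x,y):x-y\in A-A\}$ (whose sections are nowhere dense once $A-A$ is closed with empty interior) is correct, and replacing \cite[Corollary 2.3]{Kubis} by Mycielski's theorem is exactly the right way around the $\mathrm{cov}(\M)$ problem you spotted; the contrapositive argument for $(1)\Rightarrow(2)$ is also correct. The proposal breaks, however, at the step you call essentially trivial, namely $(2)\Rightarrow(3)$. Closedness of $A-A$ gives only the trivial direction $(3)\Rightarrow(2)$: a closed symmetric set containing $0$ can have nonempty interior far from $0$ while containing no neighborhood of $0$, and this really happens for difference sets of compact sets. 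Let $B=\{\sum_{n\geq1}b_n4^{-n}:\ b_n\in\{0,1\}\}$ and $A=B\cup(2B+10)\subseteq\R$. Then $(2B+10)-B=10+\{\sum_{n\geq1}d_n4^{-n}:\ d_n\in\{-1,0,1,2\}\}=10+[-\frac{1}{3},\frac{2}{3}]$, so $A-A$ has nonempty interior; on the other hand $(A-A)\cap(-1,1)=(B-B)\cup2(B-B)$ is a compact Lebesgue-null set, so $A-A$ contains no neighborhood of $0$, and applying Mycielski to the closed nowhere dense relation $x-y\in(B-B)\cup2(B-B)$ on $[0,1]$ yields a Cantor set $D\subseteq[0,1]$ whose nonzero differences avoid $(B-B)\cup2(B-B)$ and (having modulus at most $1$) also avoid the two far components of $A-A$; hence $D$, with hull $A$ itself, witnesses $A\in\h 1$. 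So $(1)$ and $(2)$ hold while $(3)$ fails: the implication you need to close the cycle is not just unjustified but false, and clause $(3)$ cannot be equivalent to the other two for arbitrary compact $A$. In particular your proposal never establishes $(2)\Rightarrow(1)$, which is the direction actually used in the paper (Corollary \ref{Cm}, Sections 5 and 6); the robust content of the proposition is $(1)\Leftrightarrow(2)$ together with the one-line $(3)\Rightarrow(2)$.

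The missing direction $(2)\Rightarrow(1)$ has a direct proof, and this is where compactness genuinely enters: $A-A$ is compact, hence closed, so if it contains no neighborhood of $0$ one can pick $t_n\to0$ with $t_n\notin A-A$ (the case $A=\emptyset$ being trivial). Fix a complete invariant metric $d$ on $X$ (such a metric exists on every abelian Polish group) and choose recursively a subsequence $(s_k)$ of $(t_n)$ decreasing so fast that $\sum_{j>k}2d(0,s_j)<\mathrm{dist}(\{s_k,-s_k\},A-A)$ for every $k$; this is possible because each $\mathrm{dist}(\{s_k,-s_k\},A-A)$ is positive, $A-A$ being closed and symmetric. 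The map $2^\omega\ni\varepsilon\mapsto\sum_{k:\ \varepsilon_k=1}s_k$ is well defined and continuous, and if $x,y$ are two distinct subsums with $m$ the least index where their supports differ, then $d(x-y,\pm s_m)\leq\sum_{j>m}2d(0,s_j)<\mathrm{dist}(\pm s_m,A-A)$, so $x-y\notin A-A$; in particular the map is injective and its image $D$ is a Cantor set with $(D-D)\cap(A-A)=\{0\}$. Consequently $|(D+x)\cap A|\leq1$ for every $x\in X$, i.e.\ $A\in\h 1$ with Borel hull $A$. With this argument in place you obtain the equivalence $(1)\Leftrightarrow(2)$, which is all the paper relies on.
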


\begin{cor}
\label{Cm}
Let $m\in\omega\setminus 4$. The set $A=\left\{\sum_{n\in\omega}\frac{i_n}{m^{n+1}}:\ \forall_{n\in\omega}\ i_n\in\{0,m-1\}\right\}$ is Haar-$1$.
\end{cor}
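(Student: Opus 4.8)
The plan is to reduce everything to Proposition~\ref{null-1}. The set $A$ is compact, being the image of the compact space $\{0,m-1\}^\omega$ (which is homeomorphic to $2^\omega$) under the continuous map $(i_n)_{n}\mapsto\sum_{n}i_n m^{-(n+1)}$; in fact this map is injective, so $A$ is a Cantor set, but only compactness is needed. Thus, by Proposition~\ref{null-1}, it suffices to show that $A-A$ has empty interior in $\R$.

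To do this, I would first describe $A-A$ explicitly: since $i_n-j_n$ runs exactly over $\{-(m-1),0,m-1\}$ as $i_n,j_n$ independently run over $\{0,m-1\}$, we get
$$A-A=\left\{\sum_{n\in\omega}\frac{d_n}{m^{n+1}}:\ \forall_{n\in\omega}\ d_n\in\{-(m-1),0,m-1\}\right\},$$
so $A-A$ is the self-similar set with $A-A=\tfrac1m\bigl(\{-(m-1),0,m-1\}+(A-A)\bigr)$. Since $\sum_n(m-1)m^{-(n+1)}=1$, we have $A\subseteq[0,1]$, hence $A-A\subseteq[-1,1]$. Now, for each fixed $k\in\omega$, splitting off the first $k$ terms of the defining series gives
$$A-A=\bigcup_{(d_0,\dots,d_{k-1})\in\{-(m-1),0,m-1\}^k}\left(\sum_{i<k}\frac{d_i}{m^{i+1}}+\frac1{m^k}(A-A)\right),$$
and each of these $3^k$ sets is contained in a closed interval of length $2/m^k$, because $\tfrac1{m^k}(A-A)\subseteq[-m^{-k},m^{-k}]$. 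Therefore the outer Lebesgue measure of $A-A$ is at most $3^k\cdot 2/m^k=2\,(3/m)^k$ for every $k$. As $m\ge 4$, letting $k\to\infty$ forces $\lambda(A-A)=0$; in particular $A-A$ contains no interval, i.e.\ it has empty interior. By Proposition~\ref{null-1}, $A$ is Haar-$1$.

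There is no real obstacle — the argument is a routine covering estimate — but the step to highlight is precisely the one where the hypothesis enters: the vanishing of $3^k/m^k$ requires $m\ge 4$, not merely $m\ge 3$. For $m=3$ the set $A$ is the standard ternary Cantor set, $A-A=[-1,1]$, and $A$ is not Haar-$1$; this borderline situation is exactly the subject of Section~$4$.
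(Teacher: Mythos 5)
Your proof is correct, and it reaches Proposition~\ref{null-1} by the same reduction the paper uses (compactness of $A$ plus a smallness property of $A-A$), but it verifies that property along a different route. The paper invokes condition (2) of Proposition~\ref{null-1}: it exhibits the explicit sequence $d_n=\frac{m-1}{2\,m^{n+1}}\to 0$ and checks $(A-d_n)\cap A=\emptyset$, i.e.\ each $d_n$ lies in a gap of $A-A$, so $A-A$ contains no neighborhood of $0$. You instead invoke condition (3): you identify $A-A$ as the self-similar set $\bigl\{\sum_n d_n m^{-(n+1)}:\ d_n\in\{-(m-1),0,m-1\}\bigr\}$ and run a covering estimate giving Lebesgue measure at most $2(3/m)^k$ for every $k$, hence measure zero and empty interior when $m\ge 4$. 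Your argument proves a bit more (the difference set is actually Lebesgue null, and the computation makes the role of the threshold $m\ge 4$, via $3/m<1$, completely transparent), at the cost of a short measure-theoretic detour; the paper's check is more elementary and purely combinatorial, pinpointing concrete gaps near $0$, but gives no quantitative information about $A-A$. Both arguments, as you note, break down exactly at $m=3$, where $A-A=[-1,1]$.
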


\begin{proof}
Note that $d_n=\frac{m-1}{2\cdot m^{n+1}}$ is a converging to $0$ sequence with $(A-d_n)\cap A=\emptyset$ for all $n$. Thus, $d_n\in A-A$ for all $n$. It follows that $A-A$ does not contain a neighborhood of $0$. By item (2) of the previous proposition we get that $A$ is Haar-$1$.
\end{proof}

\section{Haar-$2$ sets}

In this section we distinguish $\overline{\h 2}$ from $\h 1$ and $\sigma\overline{\h 1}$.

Notice that the next theorem, besides providing a suitable example (the ternary Cantor set), explains why we had to assume $m>3$ in Corollary \ref{Cm}. Moreover, it strengthens \cite[Remark 3.10]{Zakrzewski}, where it is observed (with the use of Theorem \ref{PackingDim}) that the ternary Cantor set is Haar-countable but not Haar-$1$. Observe also that a required example (of a compact Haar-$2$ set which is not Haar-$1$) in the case of the group $2^\omega$ has been constructed by Zakrzewski in \cite[Theorem 3.8]{Zakrzewski}.

\begin{thm}
\label{one-third}
The ternary Cantor set $C$ is Haar-$2$ but not Haar-$1$, i.e., $C\in\overline{\h 2}\setminus\h 1$.
\end{thm}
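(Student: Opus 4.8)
The statement has two parts. For the negative part ($C\notin\h 1$) I would invoke Proposition~\ref{null-1}: since $C$ is compact, it suffices to show that $C-C$ contains a neighborhood of $0$. This is the classical fact that $C-C=[-1,1]$ (equivalently $C+C=[0,2]$); it follows by writing each $t\in[-1,1]$ in base $3$ using digits $\{-1,0,1\}$ and splitting each digit into a difference of two digits from $\{0,1\}\cdot 2$, i.e. realizing $t$ as $\sum a_n/3^{n+1}-\sum b_n/3^{n+1}$ with $a_n,b_n\in\{0,2\}$. So $C-C=[-1,1]$ has nonempty interior, and by item~(3) of Proposition~\ref{null-1}, $C$ is not Haar-$1$. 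This also immediately gives $C\notin\h 1$ hence (a fortiori, using that $\h 1\subseteq\sigma\overline{\h 1}$ is false in the wrong direction — careful here) — rather, to get $C\notin\sigma\overline{\h 1}$ I would use Remark~\ref{sigma}: it suffices to check that $C\cap I\notin\h 1$ for every open interval $I$ meeting $C$. But $C\cap I$ contains a scaled copy of $C$ (an interval of the Cantor construction is itself similar to $C$), and $\h 1$ is invariant under linear copies by the first cited theorem, so $C\cap I\notin\h 1$ by the same $C-C=[-1,1]$ computation applied to the subcopy; Remark~\ref{sigma} then yields $C\notin\sigma\overline{\h 1}$.

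\textbf{The positive part ($C\in\overline{\h 2}$).} Here I need a Cantor set $D\subseteq\R$ such that $(D-x)\cap C$ has at most $2$ elements for every $x\in\R$; by Proposition~1.4 (item (b)) this witnesses $C\in\h 2$, and since $C$ is closed we get $C\in\overline{\h 2}$. The natural candidate is to take $D$ to be a suitable ``sparse'' Cantor set, for instance a Cantor set sitting inside base-$N$ expansions with digits in $\{0,1\}$ for some large $N$, or more cleverly a set of the form $D=\{\sum d_n/N^{n+1}: d_n\in\{0,b_n\}\}$ with the gap parameters chosen so that three distinct points of $D$ can never be translated simultaneously into $C$. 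The mechanism: if $d,d',d''\in D$ are distinct with $d-x,d'-x,d''-x\in C$, then the three pairwise differences $d-d',d'-d'',d-d''$ all lie in $C-C$; the idea is to arrange $D$ so that $D-D$ meets $C-C$ only in a very thin set — small enough that it cannot contain two ``independent'' differences coming from a $3$-element subset. Concretely I expect to work digit-by-digit in a common base: design $D$ and analyze, for a putative triple, the first coordinate where the three expansions disagree, and show that the constraint ``all three differences lie in $C-C=[-1,1]$ realized with $\{0,2\}$-digits in base $3$'' forces two of the points to coincide, once the base and digit set for $D$ are chosen incommensurably with powers of $3$.

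\textbf{Main obstacle.} The genuinely delicate step is the explicit construction and verification for the positive part: choosing the right Cantor set $D$ and proving the ``no three simultaneous translates'' property. This is a combinatorial argument about carrying in mixed-base expansions, and the bookkeeping — tracking which digit positions of $x$ are determined by two of the points $d,d'$ and then deriving a contradiction at the position where the third point $d''$ first differs — is where the real content lies. I would expect to need $D$ to have Cantor structure with gaps growing fast enough (or placed at positions avoiding the $3$-adic grid) that the system of equations forced by membership in three translates of $C$ is overdetermined for any $3$-element set. Everything else (compactness giving $\overline{\h 2}$ from $\h 2$, the use of Proposition~1.4, invariance of $\h 1$, the self-similarity of $C$, and the identity $C-C=[-1,1]$) is routine once this core construction is in place.
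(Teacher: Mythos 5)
Your negative half is correct and is exactly the paper's argument: $C$ is compact, $C-C=[-1,1]$ by the classical Steinhaus computation, so Proposition \ref{null-1} gives $C\notin\h 1$. (The digression about $\sigma\overline{\h 1}$ is not part of this theorem; it is the content of the corollary that follows it, and your self-similarity remark is indeed how the paper derives it.)

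The positive half, however, is only a statement of intent, and the mechanism you sketch points in a direction that cannot work as stated. You propose to control $D-D$ against $C-C$; but $C-C=[-1,1]$, so every pairwise difference of points of a set $D\subseteq[0,1]$ automatically lies in $C-C$, and no choice of $D$ makes $(D-D)\cap(C-C)$ ``thin''. Worse, since $C$ is not Haar-$1$, for \emph{every} Cantor set $D$ there are pairs $d\neq d'$ with $C\cap(C-(d'-d))\neq\emptyset$ --- this intersection can even be $C$ restricted to a whole triadic interval --- so no condition on pairs can do the job. The real obstruction must be the emptiness of the genuine triple intersection $C\cap(C-(d'-d))\cap(C-(d''-d))$, and its analysis rests on the dichotomy that for $a=\sum_{i<k}a_i/3^{i+1}$ the set $C\cap(C-a)$ is finite when the number of digits $a_i=1$ is odd, and is $C$ meeting a union of level-$k$ triadic intervals when that number is even. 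Accordingly the paper's witness $D$ lives \emph{on} the triadic grid (not ``incommensurably with powers of $3$''): at stage $n$ it reserves, for each $3$-element subset $\{s<_{lex}s'<_{lex}s''\}$ of $2^n$, a block of $12$ consecutive ternary digits on which the three branches read $\bar{0}_{12}$, $\bar{x}^\frown(1)$ and $\bar{y}^\frown(1)$, where $\bar{x}=(1,1,1,1,1,0,1,1,1,1,1)$ and $\bar{y}=(0,2,0,0,2,1,0,2,0,0,2)$ are rigged so that appending them kills the triple intersection in each of the four parity cases; a short carrying argument then checks that $d'-d$ and $d''-d$ really do read $\bar{x}$ and $\bar{y}$ on those positions. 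None of this construction, nor the parity case analysis it depends on, appears in your proposal, so the core of the theorem is missing.
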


\begin{proof}
First we show that $C$ is not Haar-$1$. This is easy using Proposition \ref{null-1}, since by a well-known theorem of Steinhaus, $C-C=[-1,1]$.

Now we prove that $C$ is Haar-$2$. We start with some useful observations. Firstly, note that $C\cap(C-(\frac{1}{3^1}+\frac{1}{3^2}))\cap(C-\frac{2}{3^2})=\emptyset$ (as $C\cap(C-(\frac{1}{3^1}+\frac{1}{3^2}))\subseteq[\frac{2}{9},\frac{3}{9}]$ and $C\cap(C-\frac{2}{3^2})\cap[\frac{2}{9},\frac{3}{9}]$ is empty -- see Figure 1). Actually, we can generalize the above observation a little bit: we have $C\cap (C-((\frac{1}{3^1}+\frac{1}{3^2})+a'))\cap (C-(\frac{2}{3^2}+b'))=\emptyset$ for any $a',b'\in[0,\frac{1}{9})$ (as $\text{dist}(C\cap (C-(\frac{1}{3^1}+\frac{1}{3^2})),(C-\frac{2}{3^2}))=\frac{1}{3^2}$). 

We can further generalize our observation: fix two sequences $\bar{a}=(a_i)_i,\bar{b}=(b_i)_i\in 3^k$ and denote $a=\sum_{i\in k}\frac{a_i}{3^{i+1}}$, $b=\sum_{i\in k}\frac{b_i}{3^{i+1}}$. Let $\alpha$ and $\beta$ stand for the number of $1$'s in $\bar{a}$ and $\bar{b}$, respectively. Note first that $C\cap (C-a)$ is either finite (if $\alpha$ is odd) or equal to $C\cap I$ where $I$ is some finite union of intervals of the form $[\frac{l}{3^k},\frac{l+1}{3^k}]$, $l<3^k$ (if $\alpha$ is even). Consider first the case that both $\alpha$ and $\beta$ are even. Then, similarly as in the beginning of the previous paragraph, for $x'=\frac{1}{3^{k+1}}+\frac{1}{3^{k+2}}$ and $y'=\frac{2}{3^{k+2}}$ we have $C\cap (C-(a+x'+a'))\cap (C-(b+y'+b'))=\emptyset$ for any $a',b'\in[0,\frac{1}{3^{k+2}})$. 

\begin{figure}[H]
\begin{center}
\includegraphics[scale=0.24]{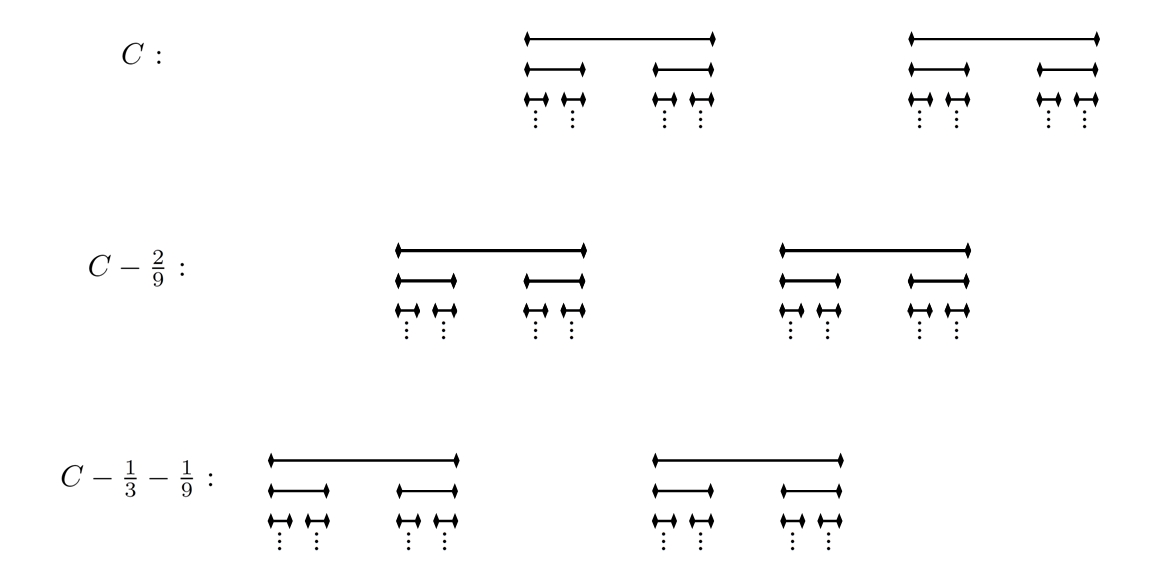}
\end{center}
\caption{}
\end{figure}

Using the above observation we can see that in the general case (when $\alpha$ and $\beta$ are arbitrary), by putting 
$$\bar{x}=(x_i)_{i\in 11}=(1,1,1,1,1,0,1,1,1,1,1)$$ $$\bar{y}=(y_i)_{i\in 11}=(0,2,0,0,2,1,0,2,0,0,2)$$
and denoting $x=\sum_{i\in 11}\frac{x_i}{3^{k+1+i}}$ and $y=\sum_{i\in 11}\frac{y_i}{3^{k+1+i}}$, we have $C\cap (C-(a+x+a'))\cap (C-(b+y+b'))=\emptyset$ for any $a',b'\in[0,\frac{1}{3^{k+11}})$. Indeed, the first two elements of $\bar{x}$ and $\bar{y}$ guarantee us that this intersection is empty when both $\alpha$ and $\beta$ are even. If $\alpha$ is odd and $\beta$ is even, then the third elements of $\bar{x}$ and $\bar{y}$ lead us to the case that both $\alpha+|\{i\in 3:\ x_i=1\}|$ (i.e., the number of $1$'s in $\bar{a}^\frown (\bar{x}\upharpoonright 3)$) and $\beta+|\{i\in 3:\ y_i=1\}|$ (i.e., the number of $1$'s in $\bar{b}^\frown (\bar{y}\upharpoonright 3)$) are even and using the fourth and fifth elements of $\bar{x}$ and $\bar{y}$ we see that the considered intersection is empty. The two remaining cases are similar -- if both $\alpha$ and $\beta$ are odd, then the numbers of $1$'s in $\bar{a}^\frown (\bar{x}\upharpoonright 6)$ and $\bar{b}^\frown (\bar{y}\upharpoonright 6)$ are even, and if $\alpha$ is even and $\beta$ is odd, then the numbers of $1$'s in $\bar{a}^\frown (\bar{x}\upharpoonright 9)$ and $\bar{b}^\frown (\bar{y}\upharpoonright 9)$ are even).

We are ready to construct a Cantor set $D$ witnessing that $C$ is Haar-$2$. Define $k_0=0$, $k_1=1$ and $k_{n}=k_{n-1}+12\left(\genfrac{}{}{0pt}{}{2^n}{3}\right)$ for $n>1$ (where $\left(\genfrac{}{}{0pt}{}{2^n}{3}\right)$ denotes the number of $3$-combinations of the set $2^n$). We inductively pick finite sequences $d^s=(d^{s}_i)_i\in 3^{k_n-k_{n-1}}$ for every $s\in 2^n$ and $n\geq 1$. At the end $D$ will consist of all points of the form $\sum_{n\geq 1}\bar{d}_{x|n}$ for some $x\in 2^\omega$ where $\bar{d}_{s}=\sum_{i=1}^{k_n-k_{n-1}}\frac{d^s_{i-1}}{3^{k_{n-1}+i}}$ for $s\in 2^n$, $n\geq 1$. We start with $d^{(0)}=(0)$ and $d^{(1)}=(1)$ (here $(0)$ and $(1)$ denote the sequence of length one consisting of the element $0$ and $1$, respectively). In the $n$th step define $d^s$ for each $s\in 2^n$ in such a way that:
\begin{itemize}
	\item $d^s$ is a concatenation of $\left(\genfrac{}{}{0pt}{}{2^n}{3}\right)$ sequences each of which is equal to $\bar{x}^\frown(1)$, $\bar{y}^\frown(1)$ or $\bar{0}_{12}$, i.e., the sequence consisting of twelve $0$'s;
	\item for each pairwise distinct $s,s',s''\in 2^n$ with $s<_{lex}s'<_{lex}s''$, where $<_{lex}$ denotes the lexicographic order on $2^{<\omega}$ (note that there are exactly $\left(\genfrac{}{}{0pt}{}{2^n}{3}\right)$ ways to choose $s,s',s''$) we can find $F=\{12p,\ldots,12p+11\}$ for some $p<\left(\genfrac{}{}{0pt}{}{2^n}{3}\right)$ such that $(d^s_i)_{i\in F}=\bar{0}_{12}$, $(d^{s'}_i)_{i\in F}=\bar{x}^\frown(1)$ and $(d^{s''}_i)_{i\in F}=\bar{y}^\frown(1)$.
\end{itemize}
By the construction, given any $d,d',d''\in D$ with $d<d'<d''$, we can find $12$ consecutive positions $\{q,q+1,\ldots,q+11\}$ such that on these positions, the $3$-ary expansion $(d_i)\in 3^\omega$ of $d$ is given by $\bar{0}_{12}$, the $3$-ary expansion $(d'_i)\in 3^\omega$ of $d'$ is given by $\bar{x}^\frown(1)$ and the $3$-ary expansion $(d''_i)\in 3^\omega$ of $d''$ is given by $\bar{y}^\frown(1)$. Then on the first $11$ of those positions $\{q,q+1,\ldots,q+10\}$ the $3$-ary expansions of $d'-d$ and $d''-d$ are given by $\bar{x}$ and $\bar{y}$, respectively. Indeed, 
$$d'-d=\sum_{i\in\omega}\frac{d'_i-d_i}{3^{i+1}}=\sum_{i=0}^{q-1}\frac{d'_i-d_i}{3^{i+1}}+\sum_{i=0}^{10}\frac{x_i}{3^{i+1}}+\frac{1}{3^{q+11+1}}+\sum_{i=q+12}^{+\infty}\frac{d'_i-d_i}{3^{i+1}},$$
and it suffices to note that $\left|\sum_{i=q+12}^{+\infty}\frac{d'_i-d_i}{3^{i+1}}\right|\leq \frac{1}{3^{q+11+1}}$ and $\sum_{i=0}^{q-1}\frac{d'_i-d_i}{3^{i+1}}$ in the $3$-ary expansion of $d'-d$ affects only positions less than $q$  (similarly for $d''-d$).

Finally, by the observations from the beginning of this proof, we have
$$\left|(C-d)\cap (C-d')\cap (C-d'')\right|=\left|C\cap (C-(d'-d))\cap (C-(d''-d))\right|=0.$$ 
It follows that $C\in\h 2$.
\end{proof}

\begin{cor}
\label{x}
There is a compact Haar-$2$ set which is not a countable union of closed Haar-$1$ sets. Hence, $\overline{\h 2}\setminus\sigma\overline{\h 1}\neq\emptyset$.
\end{cor}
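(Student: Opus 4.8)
The plan is to show that the ternary Cantor set $C$ itself is the desired example. By Theorem \ref{one-third}, $C$ is a compact set lying in $\overline{\h 2}$, so the whole corollary reduces to checking that $C\notin\sigma\overline{\h 1}$: if $C$ were a countable union of closed Haar-$1$ sets it would by definition belong to $\sigma\overline{\h 1}$, and conversely, once $C\in\overline{\h 2}\setminus\sigma\overline{\h 1}$, the set $\overline{\h 2}\setminus\sigma\overline{\h 1}$ is nonempty.

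To prove $C\notin\sigma\overline{\h 1}$ I would appeal to Remark \ref{sigma}: it suffices to verify that $C\cap I\notin\h 1$ for every open interval $I\subseteq\R$ with $C\cap I\neq\emptyset$. Here I exploit the self-similarity of the Cantor set. Fix such an $I$, choose $x\in C\cap I$, and note that since $I$ is open and $x\in C$, for all sufficiently large $k$ the level-$k$ basic interval $J=[\frac{l}{3^k},\frac{l+1}{3^k}]$ of the Cantor construction that contains $x$ is contained in $I$. Then $E:=C\cap J=\frac{l}{3^k}+\frac{1}{3^k}C$ is a compact subset of $C\cap I$ which is an affine copy of $C$.

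The key computation is that $E-E=\frac{1}{3^k}(C-C)=[-\frac{1}{3^k},\frac{1}{3^k}]$ (using the Steinhaus-type fact $C-C=[-1,1]$ already invoked in the proof of Theorem \ref{one-third}), so $E-E$ has nonempty interior. By Proposition \ref{null-1}, the compact set $E$ is therefore not Haar-$1$. Since $\h 1$ is a semi-ideal, i.e.\ closed under subsets, and $E\subseteq C\cap I$, this forces $C\cap I\notin\h 1$, as required. (Alternatively, $E$ is a linear copy of $C$, so $E\notin\h 1$ follows from $C\notin\h 1$ and the invariance of $\h 1$ under topological group isomorphisms.)

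I do not expect a serious obstacle here: the argument is just an assembly of Theorem \ref{one-third}, Proposition \ref{null-1} and Remark \ref{sigma}. The only minor subtleties are that $C\cap I$ need not be closed --- which is why I pass to the genuinely closed, compact subset $E$ and use that $\h 1$ is downward closed --- and the routine verification that $E$ is indeed an affine copy of $C$ sitting inside $C\cap I$.
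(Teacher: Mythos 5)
Your proof is correct and follows essentially the same route as the paper: the paper also takes the ternary Cantor set $C$, notes it is Haar-$2$ by Theorem \ref{one-third}, argues that $C\cap I\notin\h 1$ for every open interval $I$ meeting $C$ because $C\cap I$ contains an affine copy of $C$ (so Proposition \ref{null-1} and $C-C=[-1,1]$ apply), and concludes via Remark \ref{sigma}. Your write-up merely spells out the details that the paper leaves as ``the same methods as in the previous proof,'' and does so correctly.
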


\begin{proof}
Consider the standard ternary Cantor set $C$. We already know that it is Haar-$2$. Observe that whenever $C\cap I\neq\emptyset$ for an open interval $I\subseteq\R$, we have $C\cap I\notin\h 1$ (this can be done using the same methods as in the previous proof, since $C\cap I$ contains an affine copy of $C$). It follows from Remark \ref{sigma} that $C\notin\sigma\overline{\h 1}$.
\end{proof}

\section{Haar-$n$ sets}

Now we move to Haar-$n$ sets for arbitrary $n\in\omega$. 

\begin{thm}
For each $n>0$ there are compact Haar-$1$ sets $A_0,\ldots,A_{n}$ such that $A_0\cup \ldots\cup A_{n}$ is Haar-$(n+1)$ but not Haar-$n$. In particular, $\overline{\h (n+1)}\setminus\overline{\h n}\neq\emptyset$.
\end{thm}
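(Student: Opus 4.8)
The claim that $A:=A_0\cup\dots\cup A_n$ is Haar-$(n+1)$ is essentially free: once the $A_j$ are compact and Haar-$1$ they lie in $\overline{\h 1}$, and applying the first part of Theorem~\ref{sumyHn} $n$ times gives $A\in\overline{\h(n+1)}\subseteq\h(n+1)$. Since this also shows $A\in\overline{\h(n+1)}$, the ``in particular'' clause follows the moment we know $A\notin\h n$ (which forces $A\notin\overline{\h n}$). So the real content is to build compact Haar-$1$ sets $A_0,\dots,A_n\subseteq\R$ whose union is not Haar-$n$. Each $A_j$ will be a Cantor set whose self-difference has empty interior, so its Haar-$1$-ness is delivered by Proposition~\ref{null-1}; the work is all in making the union large.

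To get $A\notin\h n$ it is enough, by item $(b)$ of the proposition characterizing Haar-$\I$ sets, that every Cantor set $D\subseteq\R$ admits an $x$ with $|(D-x)\cap A|\ge n+1$. I would first reduce this to a purely local property of $A$: \emph{there exist $\delta>0$ and $R>0$ such that for every tuple $0=u_0<u_1<\dots<u_n<\delta$ with $u_k-u_{k-1}\ge R\,u_{k-1}$ for $1\le k\le n$ one has $\bigcap_{k=0}^n(A-u_k)\neq\emptyset$}, i.e.\ there is $z$ with $z+u_k\in A$ for all $k$. This suffices because a Cantor set is perfect: fixing $q\in D$ that is a one-sided limit of points of $D$, one can choose $q=p_0<p_1<\dots<p_n$ in $D$ inside a window of length $<\delta$ whose consecutive gaps grow by factors $\ge R$, and then $u_k:=p_k-p_0$, plugged into the local property and followed by a translation, yields the desired $x$. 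The advantage is that such ``super-increasing'' configurations are \emph{caterpillars}: at the scale of the $k$-th gap, $p_0,\dots,p_{k-1}$ coincide and $p_{k+1},\dots,p_n$ are still unresolved, so $A$ only needs to contain, around $n$ widely separated scales, translates of arbitrary $2$-point displacements — together with the right carry behaviour for the points already split off at coarser scales.

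For $n=1$ the local property holds outright for $A_0=\{\sum_{k\ge1}a_k4^{-k}:a_k\in\{0,1\}\}$ and $A_1=\{\sum_{k\ge1}a_k4^{-k}:a_k\in\{1,3\}\}$: the self-differences $A_0-A_0$ (digit set $\{-1,0,1\}$) and $A_1-A_1$ (digit set $\{-2,0,2\}$) are nowhere dense, so $A_0,A_1$ are compact Haar-$1$, whereas $A_1-A_0=[0,1]$, so $(A_0\cup A_1)-(A_0\cup A_1)$ has interior and $A_0\cup A_1\notin\h1$ (and $A_0\cup A_1\in\overline{\h2}$ by Theorem~\ref{sumyHn}). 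For general $n$ I would build $A_0,\dots,A_n$ as Cantor sets defined by a level-by-level digit scheme in a large base $b$, with the positions organized into consecutive blocks. Outside its own blocks every admissible digit set $D_j^{(\ell)}$ contains $0$, so the common spine of a configuration is admissible for every $A_j$; inside each block one distinguishes a \emph{carrier} index and a \emph{peeler} index whose digit sets are coupled exactly as in the identity $\{1,3\}-\{0,1\}=\mathbb{Z}_4$ above — so that any displacement at the block's scale can be written as ``peeler digits $=$ carrier digits $+$ displacement'' — and these two roles are cycled through all of $\{0,\dots,n\}$ from block to block. Keeping each $D_j^{(\ell)}$ of size $\le 2$ and well separated makes each $A_j-A_j$ a Cantor set that misses an interval at infinitely many levels, so each $A_j$ is Haar-$1$ by Proposition~\ref{null-1}. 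Given a super-increasing configuration, a large $R$ forces its $n$ branching scales into $n$ distinct blocks; using the cycling of roles one assigns the $n+1$ configuration points to the sets $A_j$, and then, processing blocks from the deepest to the shallowest, one recursively selects the digits of the anchor $z$ so that $z$ and each $z+u_k$ is admissible for its assigned set.

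The hard part — the reason this is not the one-line base-$4$ computation — is reconciling two conflicting demands: Haar-$1$-ness of each $A_j$ forces its digit sets to remain thin (the self-difference must miss an interval at infinitely many scales), yet a configuration has an arbitrary base-$b$ expansion, so transplanting it into $\bigcup_j A_j$ forces the union to absorb arbitrary digit perturbations — in particular the tails of the coarser gaps, which drift as ``passengers'' into the finer blocks and must remain admissible there for whichever set they were routed to. Making this work is a delicate multi-scale, finite-state bookkeeping: one must take $b$ large and the blocks long, design the cycling of carrier/peeler roles and the digit sets so that the passenger constraints never over-determine the anchor's digits, and carefully manage the base-$b$ carries generated by the additions $z+u_k$. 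This is a substantial elaboration of the digit manipulations already used in the proofs of Theorem~\ref{one-third} and Proposition~\ref{ND}; once it is carried out, the pieces above combine to give the theorem.
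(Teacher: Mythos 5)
Your reductions are fine as far as they go: the Haar-$(n+1)$ half via Theorem \ref{sumyHn}, the complete $n=1$ example (where $A_1-A_0=[0,1]$ plus Proposition \ref{null-1} gives non-Haar-$1$ very cheaply), and the reduction of ``not Haar-$n$'' to the local property about super-increasing configurations $0=u_0<u_1<\dots<u_n<\delta$ (a perfect set does contain such configurations near any right-sided limit point, so that step is sound). The problem is that for $n\geq 2$ the proposal stops at a plan. No digit sets $D_j^{(\ell)}$ are actually specified, the Haar-$1$-ness of the individual $A_j$ is not verified for any concrete scheme, and the local property for the union -- the entire technical content of the theorem -- is explicitly deferred as ``delicate multi-scale, finite-state bookkeeping ... once it is carried out.'' Moreover the sketch as stated has a concrete internal tension: with digit sets of size at most $2$ in a large base $b$, a peeler-minus-carrier difference has at most $4$ elements, so no single position can absorb an arbitrary displacement digit; the displacement has to be spread across a whole block, and it is exactly this spreading, together with the carries it generates and the requirement that the ``passenger'' tails of coarser gaps remain admissible for whichever $A_j$ they were routed to, that is left unresolved. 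So there is a genuine gap: the theorem is proved only for $n=1$.

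For comparison, the paper's construction sidesteps both the block structure and the restriction to super-increasing configurations. It takes $A_m$ to be the set of reals whose base-$5$ digits at positions $\equiv m \pmod{n+1}$ lie in $\{0,4\}$, all other digits being free. Each $A_m$ is Haar-$1$ by Proposition \ref{null-1}, since a suitable sequence tending to $0$ misses $A_m-A_m$. Given an \emph{arbitrary} configuration $x_0<\dots<x_n$ in a Cantor set with $x_n-x_0<\frac{1}{5}$, one absorbs each displacement $d_m=x_m-x_0$ by setting the anchor's digits at positions $\equiv m \pmod{n+1}$ equal to $4-d_{m,i}$ (these positions are unconstrained for $A_0$, so no passenger problem arises), and the remaining work is a carry analysis showing that after performing the additions the constrained digits stay in $\{0,4\}$, so that the anchor lands in $\bigcap_{m}(A_m-x_m)$. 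If you want to complete your route, the simplest repair is to replace the block/cycling scheme by this residue-class interleaving; otherwise the bookkeeping you describe has to be carried out in full before the statement can be considered proved.
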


\begin{proof}
Fix $n>0$ and define: 
$$A_m=\left\{\sum_{k\in\omega}\frac{i_k}{5^{k+1}}:\ \forall_{k\in\omega}\ i_{k}\in 5\ \wedge\ i_{(n+1)k+m}\in\{0,4\}\right\}$$
for $m=0,\ldots,n$.

Observe that each $A_m$ is closed and in $\h 1$, by Proposition \ref{null-1}, since $d_{m,k}=\frac{5}{2\cdot 5^{(n+1)k+m+1}}$ is a converging to $0$ sequence with $(A_m-d_{m,k})\cap A_m=\emptyset$ for all $k$.

By Theorem \ref{sumyHn}, $A_0\cup \ldots\cup A_{n}$ is Haar-$(n+1)$. We will show that $A_0\cup \ldots\cup A_{n}$ is not Haar-$n$. 

Fix any Cantor set $D\subseteq\mathbb{R}$ and find $x_0<\ldots<x_{n}\in D$ such that $x_{n}-x_0\in (0,\frac{1}{5})$. Denote $d_m=x_m-x_0$ for $m=1,\ldots,n$ and let $(d_{m,i})_i\subseteq 5^\omega$ be the $5$-ary expansion of $d_m$. Note that $d_{m,0}=0$ for all $m$ (as $d_m\leq d_n<\frac{1}{5}$). We will construct $5$-ary expansions of $a_m\in A_m$, for $m=0,\ldots,n$, such that $a_0=a_1-d_1=\ldots=a_{n}-d_{n}$. It will follow that: 
$$a_0-x_0\in\bigcap_{m=0}^{n} (A_m-x_m)\subseteq \bigcap_{m=0}^{n} ((A_0\cup \ldots\cup A_{n})-x_m).$$

Define:
$$a_{0,i}=\left\{\begin{array}{ll}
0 & \textrm{if }(n+1)|i,\\
4-d_{m,i} & \textrm{if }i\equiv m \mod (n+1),
\end{array}\right.$$
and $\bar{a}_{m,i}=a_{0,i}+d_{m,i}\in\{0,\ldots,8\}$ for all $m=1,\ldots,n$ and $i\in\omega$. Note that $\bar{a}_{0,0}=0$ and $a_{m,0}=0$ for all $m$. We need to improve the sequences $(\bar{a}_{m,i})_i$ so that they will have only values $\leq 4$. 

Fix $m=1,\ldots,n$ and denote $B_m=\{i\in\omega:\ \bar{a}_{m,i}>4\}$. Observe that $i(n+1)\notin B_m$ (as $\bar{a}_{m,i(n+1)}=d_{m,i(n+1)}$) and $(n+1)i+m\notin B_m$ (as $\bar{a}_{m,(n+1)i+m}=4$), for all $i$. 

Suppose first that $B_m$ is infinite. Inductively define a sequence $(\beta_{m,i})_i\in 2^\omega$ by:
\begin{itemize}
\item $\beta_{m,i}=1$ for all $i\in B_m-1$;
\item $\beta_{m,i}=1$ iff $\beta_{m,i+1}=1$ and $\bar{a}_{m,i+1}+1>4$, for all other $i$;
\end{itemize}
(this can be easily done by considering all $i\in B_m-1$ in the first inductive step, all $i\in (B_m-2)\setminus (B_m-1)$ in the second inductive step, all $i\in (B_m-3)\setminus ((B_m-1)\cup(B_m-2))$ in the third inductive step, etc.). The sequence $(\beta_{m,i})_i$ corresponds to carrying in the standard algorithm for addition -- $\beta_{m,i}$ will indicate whether $\bar{a}_{m,i}$ should be enlarged by one (this case corresponds to $\beta_{m,i}=1$) or not ($\beta_{m,i}=0$).

We are ready to define the improved sequence:
$$a_{m,i}=(\bar{a}_{m,i}+\beta_{m,i})\mod 5.$$
Recall that $\bar{a}_{m,(n+1)i+m}=4$ for all $i$. Thus, it is easy to check that $a_m=\sum_{i\in\omega}\frac{a_{m,i}}{5^{i+1}}\in A_m$ as $a_{m,(n+1)i+m}$ is either equal to $4$ (if $\beta_{m,(n+1)i+m}=0$) or to $0$ (if $\beta_{m,(n+1)i+m}=1$). 

To finish the case of $B_m$ being infinite, we will show that $\sum_{i\in\omega}\frac{a_{m,i}}{5^{i+1}}=\sum_{i\in\omega}\frac{\bar{a}_{m,i}}{5^{i+1}}$. As $\bar{a}_{m,i}=a_{0,i}+d_{m,i}$ for all $i$, it will follow that $a_0=\sum_{i\in\omega}\frac{a_{0,i}}{5^{i+1}}$ is equal to $a_m-d_m$. 

Observe that $a_{m,b}=(\bar{a}_{m,b} \mod 5)+\beta_{m,b}$ for all $b\in B_m$, since $5\leq\bar{a}_{m,b}\leq 8$. Suppose that $b$ and $b'$ are two consecutive elements of $B_m$. We will show that
\begin{equation}
\label{eq}
\sum_{i\in(b,b']}\frac{\bar{a}_{m,i}}{5^{i+1}}=\frac{\beta_{m,b}}{5^{b+1}}+\sum_{i\in(b,b')}\frac{a_{m,i}}{5^{i+1}}+\frac{(\bar{a}_{m,b'}\mod 5)}{5^{b'+1}}.
\end{equation}
There are two possible cases:
\begin{itemize}
	\item If $\beta_{m,b}=1$, then it must be the case that $\beta_{m,i}=1$ and $\bar{a}_{m,i}=4$ (so $a_{m,i}=0$) for all $i\in(b,b')$. Observe that 
$$\sum_{i\in(b,b')}\frac{4}{5^{i+1}}=\frac{4}{5^{b+2}}\sum_{i=0}^{b'-b-2}\frac{1}{5^{i}}=\frac{4}{5^{b+2}}\frac{1-\frac{1}{5^{b'-b-1}}}{1-\frac{1}{5}}=\frac{1}{5^{b+1}}-\frac{1}{5^{b'}}.$$
Thus, the left hand side of equation (\ref{eq}) is equal to
$$\sum_{i\in(b,b')}\frac{4}{5^{i+1}}+\frac{\bar{a}_{m,b'}}{5^{b'+1}}=\sum_{i\in(b,b')}\frac{4}{5^{i+1}}+\frac{5}{5^{b'+1}}+\frac{(\bar{a}_{m,b'}\mod 5)}{5^{b'+1}}=$$
$$=\frac{1}{5^{b+1}}-\frac{1}{5^{b'}}+\frac{1}{5^{b'}}+\frac{(\bar{a}_{m,b'}\mod 5)}{5^{b'+1}}=\frac{1}{5^{b+1}}+\frac{(\bar{a}_{m,b'}\mod 5)}{5^{b'+1}},$$
which is the right hand side of equation (\ref{eq}).
	\item If $\beta_{m,b}=0$, then there is $j\in(b,b')$ such that $\beta_{m,i}=0$ for all $i\in[b,j)$ and $\beta_{m,i}=1$ for all $i\in[j,b')$. Then 
	\begin{itemize}
		\item $a_{m,i}=\bar{a}_{m,i}$ for all $i\in(b,j)$;
		\item $\bar{a}_{m,j}+\beta_{m,j}=\bar{a}_{m,j}+1\leq 4$ (so $a_{m,j}=\bar{a}_{m,j}+1$);
		\item $\bar{a}_{m,i}=4$ (so $a_{m,i}=0$) for all $i\in(j,b')$. 
	\end{itemize}
Note that $\sum_{i\in(j,b')}\frac{4}{5^{i+1}}=\frac{1}{5^{j+1}}-\frac{1}{5^{b'}}$. Now the left hand side of equation (\ref{eq}) is equal to
$$\sum_{i\in(b,j]}\frac{\bar{a}_{m,i}}{5^{i+1}}+\sum_{i\in(j,b')}\frac{4}{5^{i+1}}+\frac{\bar{a}_{m,b'}}{5^{b'+1}}=$$
$$=\sum_{i\in(b,j]}\frac{\bar{a}_{m,i}}{5^{i+1}}+\sum_{i\in(j,b')}\frac{4}{5^{i+1}}+\frac{5}{5^{b'+1}}+\frac{(\bar{a}_{m,b'}\mod 5)}{5^{b'+1}}=$$
$$=\sum_{i\in(b,j)}\frac{a_{m,i}}{5^{i+1}}+\left(\frac{a_{m,j}}{5^{j+1}}-\frac{1}{5^{j+1}}\right)+\left(\frac{1}{5^{j+1}}-\frac{1}{5^{b'}}\right)+\frac{1}{5^{b'}}+\frac{(\bar{a}_{m,b'}\mod 5)}{5^{b'+1}}=$$
$$=\sum_{i\in(b,j]}\frac{a_{m,i}}{5^{i+1}}+\frac{(\bar{a}_{m,b'}\mod 5)}{5^{b'+1}},$$
which is the right hand side of equation (\ref{eq}).
\end{itemize}

We have shown equation (\ref{eq}) for all pairs $(b,b')$ of two consecutive elements of $B_m$. Moreover, similarly as in the second case, one can show that
$$\sum_{i\in[0,\min B_m)}\frac{a_{m,i}}{5^{i+1}}+\frac{(\bar{a}_{m,\min B_m}\mod 5)}{5^{\min B_m+1}}=\sum_{i\in[0,\min B_m]}\frac{\bar{a}_{m,i}}{5^{i+1}}$$
(if $\beta_{m,i}=1$ for all $i\in[0,\min B_m)$, then the role of $j$ from the second case is played by $0$, as $\bar{a}_{m,0}+\beta_{m,0}=0+\beta_{m,0}\leq 1$). Therefore, $a_m=\sum_{i\in\omega}\frac{\bar{a}_{m,i}}{5^{i+1}}$ and the case of $B_m$ being infinite is finished.

In the case $B_m$ is finite, it suffices to put $\beta_{m,i}=0$ for all $i\geq\max B_m$ and proceed analogously. 

By conducting in the same way for all $m=1,\ldots,n$, we get the desired points $a_1,\ldots,a_n$ and we are done.
\end{proof}

The next corollary is an immediate consequence of the above theorem.

\begin{cor}
For each $n\in\omega\setminus\{0\}$ the family $\h n$ does not form an ideal in $\mathbb{R}$.
\end{cor}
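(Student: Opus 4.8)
The plan is to derive this directly from the preceding theorem, which already hands us, for each fixed $n\geq 1$, compact Haar-$1$ sets $A_0,\ldots,A_n\subseteq\mathbb{R}$ whose union $A_0\cup\cdots\cup A_n$ is Haar-$(n+1)$ but not Haar-$n$. Since an ideal must be closed under finite (equivalently, pairwise) unions, it suffices to split these $n+1$ sets into two groups, each lying in $\h n$, whose union lands outside $\h n$.

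Concretely, I would set $P=A_0\cup\cdots\cup A_{n-1}$ and $Q=A_n$. First, $Q\in\h 1\subseteq\h n$ by the chain of implications recorded just after the definition of Haar-smallness. Second, $P$ is a union of $n$ members of $\overline{\h 1}$, so $n-1$ successive applications of the first part of Theorem~\ref{sumyHn} give $P\in\overline{\h n}\subseteq\h n$ (for $n=1$ this step is vacuous, as $P=A_0$). Thus both $P$ and $Q$ belong to $\h n$.

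Finally, $P\cup Q=A_0\cup\cdots\cup A_n$, which by the theorem is not Haar-$n$. Hence $\h n$ is not closed under the union of two of its members, so it cannot be an ideal in $\mathbb{R}$. I do not expect any genuine obstacle here: the only point requiring a moment's care is choosing the grouping so that the left-hand side is a union of at most $n$ Haar-$1$ pieces (so that Theorem~\ref{sumyHn} applies), while one piece is left over; all the real content sits in the theorem above and in Theorem~\ref{sumyHn}.
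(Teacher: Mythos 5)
Your argument is correct and follows essentially the same route as the paper, which presents the corollary as an immediate consequence of the preceding theorem: each $A_i$ is compact and Haar-$1$, hence Haar-$n$, while $A_0\cup\cdots\cup A_n$ is not Haar-$n$, so $\h n$ fails to be closed under finite unions. Your extra detour through Theorem~\ref{sumyHn} to regroup the $n+1$ pieces into just two sets $P$ and $Q$ is valid but unnecessary: an ideal must be closed under all finite unions, not merely pairwise ones (and for families closed under subsets the two requirements coincide), so the $n+1$ Haar-$1$ sets already witness the failure directly.
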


In the next section we will need two lemmas. 

\begin{df}
\label{Cl}
Let $q_0=25$ and $q_n=q_{n-1}\cdot 25\cdot 3^n$, for $n\geq 1$. Denote by $S$ the set of all $(x_i)_i\in\omega^\omega$ satisfying $x_i\in 25\cdot 3^i$, for all $i\in\omega$. For any $l\in\omega$ we put $m_l=\frac{25\cdot 3^l-1}{2}$ and denote: 
$$C_l=\left\{\sum_{k\in\omega}\frac{i_k}{q_k}:\ (i_k)_k\in S\ \wedge\ \forall_{k\geq l}\ \lfloor\frac{i_k}{3^{i-l}}\rfloor\neq m_l\right\}.$$
\end{df}

\begin{lem}
\label{lem1}
For every $l\in\omega\setminus \{0\}$ and every open interval $I$ such that $C_l\cap I\neq \emptyset$, the set $C_l\cap I$ is not Haar-$(m_l-1)$.
\end{lem}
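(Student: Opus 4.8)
The plan is to mimic the lower-bound strategy already used in the proof of Theorem~\ref{one-third} and in the $\overline{\h(n+1)}\setminus\overline{\h n}$ theorem: fix an arbitrary Cantor set $D\subseteq\R$ and produce $m_l$ many points $d_0<d_1<\cdots<d_{m_l-1}$ in $D$ whose pairwise differences can be realized inside $C_l\cap I$ simultaneously, i.e.\ find a point of $\bigcap_{j<m_l}(C_l-(d_j-d_0))$ that lands in the prescribed interval. Since $D$ is a Cantor set it contains, for every $\eps>0$, arbitrarily many points within an interval of length $\eps$; so I may assume $d_{m_l-1}-d_0$ is as small as I like, in particular small enough that the base-$q$ digit-block structure of the differences $d_j-d_0$ only perturbs digits beyond some fixed index $N$ (chosen so that $C_l\cap I$ already contains all points agreeing with a fixed admissible prefix up to $q_N$). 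First I would record, exactly as in the ternary case, that ``$C_l\cap I$ contains an affine copy of (a tail of) $C_l$,'' so it suffices to work with $C_l$ itself and differences supported on high digits.

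The combinatorial heart is this: the forbidden pattern defining $C_l$ at coordinate $k\ge l$ is that the ``block digit'' $\lfloor i_k/3^{k-l}\rfloor$ avoids the single middle value $m_l$ out of $25\cdot 3^l$ possibilities (the paper's exponent $3^{i-l}$ should read $3^{k-l}$). So within one coordinate $k$ there are $25\cdot 3^l$ admissible block-positions and only one is deleted — this is precisely the feature that makes $C_l$ ``Haar-$(m_l+1)$-ish from above but not Haar-$(m_l-1)$ from below,'' in analogy with the $m$-ary Cantor set of Corollary~\ref{Cm} having deletion index related to $m$. Given any $m_l$ real numbers $\delta_1,\dots,\delta_{m_l-1}$ (together with $\delta_0=0$) that are small, I would build, coordinate by coordinate along a suitable run of indices $k$, a single sequence $(i_k)_k\in S$ such that for each $j$ the shifted sequence realizing $x-\delta_j$ (after carrying, handled exactly as in the previous proof via the auxiliary carry sequences $\beta$) avoids the middle block-value $m_l$ at every coordinate $k\ge l$. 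The key counting point: at a given coordinate the ``bad'' values of the local digit that would force one of the $m_l$ translates into the forbidden middle block form at most $m_l$ forbidden residues (one per translate, possibly fewer after carrying is arranged), but there are $25\cdot 3^l > m_l$ choices, leaving room; and by iterating over a block of consecutive coordinates one has enough freedom to simultaneously control all translates and keep the carries under control, while still landing in $I$. Hence $\bigcap_{j}(C_l-\delta_j)\cap I\ne\emptyset$, so no set $S'\subseteq D$ of size $m_l$ can have $\bigcap_{s\in S'}(C_l\cap I - s)=\emptyset$; by item~$(c)$ of the Proposition this means $C_l\cap I\notin\h(m_l-1)$.

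I expect the main obstacle to be the interaction between the carrying in the base-$q$ (mixed-radix) expansions and the requirement that \emph{every} translate simultaneously avoids its forbidden middle block. In Theorem~\ref{one-third} only two translates had to be dodged and the explicit blocks $\bar x,\bar y$ sufficed; here $m_l-1$ translates must be handled at once, so I would replace the explicit patterns by an inductive, coordinate-by-coordinate choice: reserve, say, three consecutive coordinates per ``separation event'' (one to absorb carries, one to place the distinguishing digit, one buffer), and at each event argue by pigeonhole that among the $25\cdot 3^l$ block-slots there is one avoided by all $m_l$ translates (again using $25\cdot3^l>m_l$). The bookkeeping — verifying that the forced carries $\beta_{j,k}$ do not propagate a translate back into the forbidden block, and that the final point stays in $I$ — is routine but lengthy, parallel to equation~(\ref{eq}) in the earlier proof; I would not grind through it here beyond noting it goes through verbatim with $n$ replaced by the relevant block length.
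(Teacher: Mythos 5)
Your plan reproduces the paper's proof essentially step for step: fix an arbitrary Cantor set, choose $m_l$ of its points sufficiently close together, and inductively build the digits of a common point of the $m_l$ translates of $C_l\cap I$, with a pigeonhole count at each coordinate resting on the slack between the $25\cdot 3^l=2m_l+1$ block values and the $m_l-1$ nontrivial translates; invoking item (c) of the Proposition then finishes the argument exactly as you say. The only real difference is technical: where you plan explicit carry bookkeeping via the $\beta$-sequences of Section 5, the paper sidesteps carries entirely by demanding at each step that both $\sum_{j\le n}c_j/q_j$ and $\sum_{j\le n}c_j/q_j+\frac{1}{q_n}$ avoid the bad sets, so the tail $\sum_{j>n}d_{i,j}/q_j<\frac{1}{q_n}$ can never push $c+d_i$ into a forbidden block — your version would go through but is needlessly heavier.
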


\begin{proof}
Since $C_l\cap I\neq \emptyset$, there is $s\in\omega^{<\omega}$ such that $s_k\in 25\cdot 3^k$, for all $k<\text{lh}(s)$, $\sum_{k<\text{lh}(s)}\frac{s_k}{q_k}\in C_l\cap I$ and $\sum_{k<\text{lh}(s)}\frac{s_k}{q_k}+\frac{1}{q_{\text{lh}(s)}}\in C_l\cap I$. 

Fix any Cantor set $D\subseteq\mathbb{R}$ and pick $x_0<\ldots<x_{m_l-1}\in D$ such that $x_{m_l-1}-x_0<\frac{1}{q_{\text{lh}(s)-1}}$. Denote $d_i=x_i-x_0$ for $i=1,\ldots,m_l-1$. We will show that there is $c\in (C_l\cap I)\cap \bigcap_{i=1}^{m_l-1}((C_l\cap I)-d_i)$. It will follow that $c-x_0\in\bigcap_{i=0}^{m_l-1}((C_l\cap I)-x_i)$.

Let $(d_{i,j})_j\in S$ be such that $d_i=\sum_{j\in\omega}\frac{d_{i,j}}{q_j}$, for each $i=1,\ldots,m_l-1$ (note that $d_{i,j}=0$ for all $i$ and $j<\text{lh}(s)$ by $x_0<\ldots<x_{m_l-1}$ and $x_{m_l-1}-x_0<\frac{1}{q_{\text{lh}(s)-1}}$). Without loss of generality we assume that
$d_{i,j}\neq 25\cdot 3^j-1$ for infinitely many $j$. We will inductively construct $(c_i)_i\in S$. We start with $c_i=s_i$ for all $i<\text{lh}(s)$. Suppose now that $n\geq\text{lh}(s)$ and $c_i$, for all $i<n$, are already defined. Denote:
$$Z^n_l\left\{\sum_{k\in\omega}\frac{i_k}{q_k}:\ (i_k)_k\in S\ \wedge\ \exists_k^\infty\ i_k\neq 25\cdot 3^k-1\ \wedge\ \lfloor\frac{i_k}{3^{i-l}}\rfloor=m_l\right\}$$
(the symbol $\exists^\infty_k$ stands for "there are infinitely many $k$"). Pick such $c_n\in 25\cdot 3^n$ that $\lfloor\frac{c_n}{3^{n-l}}\rfloor\neq m_l$ and neither $\sum_{j=0}^n \frac{c_j}{q_j}$ nor $\sum_{j=0}^n \frac{c_j}{q_j}+\frac{1}{q_n}$ belongs to the set
$$\bigcup_{i=1}^{m_l-1}\left(Z^n_l-\sum_{j=0}^n \frac{d_{i,j}}{q_j}\right).$$
This is possible by the Pigeonhole Principle, since 
$$\left|\left\{j\in 25\cdot 3^n:\ \lfloor\frac{j}{3^{n-l}}\rfloor\neq m_l\right\}\right|=25\cdot 3^n - 3^{n-l},$$
each set of the form $Z^n_l-\sum_{j=0}^n \frac{d_{i,j}}{q_j}$ excludes at most $3^{n-l}+1$ possible values of $c_n$ and
$$(3^{n-l}+1)\cdot (m_l-1)=(3^{n-l}+1)\cdot\left(\frac{25\cdot 3^l-1}{2} -1\right)=$$ $$=\frac{25\cdot 3^n}{2}-\frac{3}{2}\cdot 3^{n-l}+\frac{25\cdot 3^l-1}{2}-1<\frac{25\cdot 3^n}{2}-\frac{3}{2}\cdot 3^{n-l}+\frac{25\cdot 3^n}{2}-1<25\cdot 3^n-3^{n-l}.$$

Once the induction is completed, observe that $c=\sum_{j\in\omega} \frac{c_j}{q_j}$ is an element of $C_l\cap I$. We will show that it is also an element of $(C_l\cap I)-d_i$ for each $i=1,\ldots,m_l-1$. This will end the proof. 

Fix $i\in\{1,\ldots,m_l-1\}$ and suppose to the contrary that $c\notin (C_l\cap I)-d_i$. Then $c\in\bigcup_{n\geq l}((Z^n_l\cap I)-d_i)$, since $c+d_i\in[0,1]\cap I$ (as $c_i=s_i$, for all $i<\text{lh}(s)$, $d_{i,j}=0$, for all $j<\text{lh}(s)$, and $\sum_{k<\text{lh}(s)}\frac{s_k}{q_k}+\frac{1}{q_{\text{lh}(s)}}\in C_l\cap I\subseteq[0,1]\cap I$). Thus, there is $n_0\geq l$ such that $c\in Z_{n_0}-d_i$. Observe that: 
$$c\in \left[\sum_{j=0}^{n_0} \frac{c_j}{q_j},\sum_{j=0}^{n_0} \frac{c_j}{q_j}+\frac{1}{q_{n_0}}\right]$$ 
and denote the above interval by $J$. By the choice of $c_{n_0}$, we know that endpoints of $J$ do not belong to the set $Y=Z^{n_0}_l-\sum_{j=0}^{n_0} \frac{d_{i,j}}{q_j}$, which is a union of intervals of the form $\left[\frac{p}{q_{n_0}},\frac{p+1}{q_{n_0}}\right)$, for some integer $p$. Thus, $J$ and $Y$ are disjoint. Moreover, the distance between $\max J$ and $Y$ is at least $\frac{1}{q_{n_0}}$. Now it suffices to observe that $\sum_{j>n_0} \frac{d_{i,j}}{q_j}\in\left[0,\frac{1}{q_{n_0}}\right)$ (recall that $d_{i,j}\neq 25\cdot 3^j-1$ for infinitely many $j$). Hence, $J$ is disjoint with $ Z^{n_0}_l-d_i$, a contradiction.
\end{proof}

\begin{lem}
\label{lem2}
For every $l\in\omega$ the set $C_l$ is Haar-$(2m_l+1)$.
\end{lem}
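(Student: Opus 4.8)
The plan is to exhibit a Cantor set $D\subseteq\R$ that witnesses $C_l\in\h(2m_l+1)$ in the usual way: for every $S\subseteq D$ with $|S|\ge 2m_l+2$ one has $\bigcap_{s\in S}(C_l-s)=\emptyset$. Subtracting the least element of $S$, this reduces to showing that for any $d_0<d_1<\dots<d_{2m_l+1}$ in $D$ there is no $c$ with $c+(d_i-d_0)\in C_l$ for all $i\le 2m_l+1$.

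First I would reformulate Definition \ref{Cl} through \emph{block digits}. For $c\in[0,1]$ with mixed-radix expansion $c=\sum_k i_k/q_k$ (so $i_k\in 25\cdot 3^k$) and $k\ge l$, set $q_k(c)=\lfloor i_k/3^{k-l}\rfloor\in 25\cdot 3^l$; then $c\in C_l$ if and only if $q_k(c)\ne m_l$ for every $k\ge l$, and $m_l$ is exactly the middle one among the $25\cdot 3^l$ possible block digits. The basic arithmetic observation to isolate is that, provided a nonnegative $\delta$ is $0$ on all levels strictly between $k$ and some much finer level and is $<1/q_k$ on those finer levels, adding $\delta$ to $c$ gives $q_k(c+\delta)\equiv q_k(c)+q_k(\delta)+\varepsilon\pmod{25\cdot 3^l}$ for some carry $\varepsilon\in\{0,1\}$, and $\varepsilon$ can influence the block digit only when the residue $i_k(c)\bmod 3^{k-l}$ is maximal.

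I would then construct $D$ by the tree-with-blocks scheme from the proof of Theorem \ref{one-third}: along a binary tree one appends at successive stages finite blocks of consecutive levels $\ge l$, so sparse that no carry crosses from one block to another, arranging that $D$ is a Cantor set whose members all have expansions supported on levels $\ge l$ and that for every $(2m_l+2)$-tuple of tree nodes at a common stage (in lexicographic order) there is a block on which the branch through the $i$-th node carries a prescribed pattern $P_i$ of digits, including a trailing ``buffer'' digit (as with $\bar x^\frown(1)$ in Theorem \ref{one-third}) so that on that block $d_i-d_0$ is exactly the difference pattern $P_i-P_0$. Hence for any $d_0<\dots<d_{2m_l+1}$ in $D$ there is a block, with coarsest level $k$, at which the block digit of $d_i-d_0$ equals a prescribed value $\sigma_i\in 25\cdot 3^l$ with $\sigma_0=0$. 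A common point $c\in\bigcap_i\bigl(C_l-(d_i-d_0)\bigr)$ would then satisfy $q_k(c)\ne m_l$ and $q_k(c)+\sigma_i+\varepsilon_i\ne m_l\pmod{25\cdot 3^l}$ for every $i$, with carries $\varepsilon_i\in\{0,1\}$ determined by $c$. The patterns $P_i$ are to be chosen so that the set of values of $q_k(c)$ thereby forbidden is all of $\mathbb{Z}/(25\cdot 3^l)\mathbb{Z}$ for every admissible carry vector; since the troublesome case (when the carries actually matter) is exactly the one in which $i_k(c)\bmod 3^{k-l}$ is maximal, the extra levels occupied by each $P_i$ are used to force a block digit equal to $m_l$ at a neighbouring level of the block in that case. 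This yields the desired contradiction, so $D$ works; that $D$ is homeomorphic to $2^\omega$ and that $C_l$ is Borel (being compact) are routine.

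The main obstacle is the carry bookkeeping in the mixed-radix system together with pinning down the exact number of patterns needed: a single shifted block digit fixes $q_k(c+(d_i-d_0))$ only up to an additive ambiguity in $\{0,1\}$, and the point $c$ may try to evade the constraint at level $k$ by pushing the defect to a neighbouring level. Handling this is what forces the patterns $P_i$ to span several consecutive levels and is ultimately what makes the right bound $2m_l+1$ rather than the $2m_l$ one would naively expect.
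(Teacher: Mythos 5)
Your skeleton coincides with the paper's: a tree-built Cantor set in which, for every $(2m_l+2)$-tuple of nodes, a designated block of consecutive levels carries prescribed digit patterns realizing the shifts of the forbidden middle block, with a trailing buffer to tame carries. But the proposal stops exactly where the content of the lemma begins. You never exhibit the patterns $P_i$, and the assertion that the forbidden values of the level-$k$ block digit exhaust $\mathbb{Z}/(25\cdot3^l)\mathbb{Z}$ \emph{for every admissible carry vector} is the whole difficulty: there are $25\cdot3^l=2m_l+1$ possible block digits and only $2m_l+2$ constraints, so at most one collision is affordable, and if the carries $\varepsilon_i\in\{0,1\}$ really were arbitrary an adversarial $c$ could defeat any choice of shifts (set $\varepsilon=1$ for the pattern with shift $u$ and $\varepsilon=0$ for the pattern with shift $u-1$, leaving the value $m_l-u$ unforbidden). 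So one must control \emph{which} carry vectors can occur, and your only concrete suggestion for this --- using the extra levels of $P_i$ to ``force a block digit equal to $m_l$ at a neighbouring level'' in the bad case --- does not work: the block digit of $c+(d_i-d_0)$ at a neighbouring level is governed by the adversarial digit of $c$ there, so a single translate can only \emph{exclude} one of its values, never force it; moreover the event that a carry reaches level $k$ confines the corresponding digit of the sum near $0$, not near the middle value $m_l$.

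The mechanism that actually closes this gap in the paper is the specific choice of the pattern attached to shift $j$, namely the pair $(j\cdot 3^{k-l},\,25\cdot 3^{k+1}-1-j)$ at levels $k,k+1$. Because the second coordinate decreases as $j$ increases, for a fixed $c$ the carry into level $k$ occurs for every $j$ strictly below the level-$(k+1)$ digit of $c$, fails for every $j$ strictly above it, and is ambiguous for at most one $j$; hence the effective shifts $j+\varepsilon_j$, together with the $(0,0)$ pattern, still realize every residue modulo $25\cdot3^l$, and every value of the level-$k$ block digit is excluded. In the paper's interval language: the gaps of the auxiliary set $W^k_l$ (defined by the level-$k$ restriction alone) have length $3^{k-l}/q_k$ and period $1/q_{k-1}$, and the chosen translates shift them by steps of $3^{k-l}/q_k-1/q_{k+1}$, so consecutive translated gaps overlap by $1/q_{k+1}$ and the $2m_l+2$ translates cover a full period; this overlap is exactly what absorbs the carries and produces the bound $2m_l+1$ rather than $2m_l$. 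Without this (or an equivalent) coherence argument for the carries, your outline does not yet constitute a proof.
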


\begin{proof}
This proof is similar to the proof of Theorem \ref{one-third}.

At first fix any $k>l$ and define 
$$x_j=\frac{j\cdot 3^{k-l}}{q_k}+\frac{25\cdot 3^{k+1}-1-j}{q_{k+1}},$$ 
for $j=0,\ldots,2m_l$, and $x_{2m_l+1}=0$. Note that $25\cdot 3^k-1-j\in 25\cdot 3^k$ for each $j$. Denote
$$W_l^k=\left\{\sum_{j\in\omega}\frac{i_j}{q_j}:\ \forall_{j}\ i_j\in 25\cdot 3^j\ \wedge\ \lfloor\frac{i_k}{3^{k-l}}\rfloor\neq m_l\right\}$$
and observe that $\bigcap_{j\in 2m_l+2}(W^k_l-x_j)=\emptyset$ (basically, each gap in the set $W^k_l=W^k_l-x_{2m_l+1}$ is an open interval of length $\frac{3^{k-l}}{q_k}$ and the distance between two consecutive such gaps is $\frac{25\cdot 3^k-3^{k-l}}{q_k}$, so using $\frac{25\cdot 3^k-3^{k-l}}{3^{k-l}}+1=25\cdot 3^l-1+1=2m_l+1$ translations of the right hand side gap by $x_j$, for $j\in 2m_l+1$, we are able to cover the whole closed interval beginning in the end of the left hand side gap and ending in the beginning of the right hand side gap). Thus, $\bigcap_{j\in 2m_l+2}(C_l-x_j)$ is empty as well, since $C_l\subseteq W^k_l$.

We can further generalize this observation: for any $(a_{0,j})_j,\ldots,(a_{2m_l+1,j})_j\in \omega^k$ such that $a_{i,j}\in 25\cdot 3^j$ for all $i$ and $j$, if $a_i=\sum_{j\in k}\frac{a_{i,j}}{q_j}$, for each $i=0,\ldots,2m+1$, then we have $\bigcap_{j\in 2m_l+2}(C_l-(a_j+x_j))=\emptyset$ (since $\bigcap_{j\in 2m_l+2}(C_l-a_j)\subseteq\bigcap_{j\in 2m_l+2}(W^k_l-a_j)$ and the latter intersection is equal to $W^k_l-a_0$ intersected with some interval). Moreover, $\bigcap_{j\in 2m_l+2}(C_l-(a_j+x_j+a'_j))=\emptyset$ for any $a'_0,\ldots,a'_{2m_l+1}\in[0,\frac{1}{q_{k+2}})$ (since for each $i\in 2m_l+2$ the distance between $C_l-(a_i+x_i)$ and $\bigcap_{j\neq i}(C_l-(a_j+x_j))$ is at least $\frac{1}{q_{k+2}}$). 

Now, using the above, we construct a Cantor set $D$ witnessing that $C_l$ is Haar-$(2m_l+1)$. Let $n_l\in\omega$ be minimal such that $2^{n_l}\geq 2m_l+2$. Define $k_{n_l-1}=0$ and $k_{n}=k_{n-1}+2\left(\genfrac{}{}{0pt}{}{2^n}{2m_l+2}\right)$, for $n\geq n_l$ (where $\left(\genfrac{}{}{0pt}{}{2^n}{2m_l+2}\right)$ denotes the number of $(2m_l+2)$-combinations of the set $2^n$). Pick inductively finite sequences $\bar{d}^s=(d^{s}_i)_i\in \omega^{k_n-k_{n-1}}$ with $d^{s}_i\in 25\cdot 3^{k_{n-1}+i}$, for every $s\in 2^n$ and $n\geq n_l$. At the end $D$ will consist of all points of the form $\sum_{n\geq n_l}d_{x|n}$ for some $x\in 2^\omega$, where $d_{s}=\sum_{i=0}^{k_n-k_{n-1}-1}\frac{d^s_{i}}{q_{k_{n-1}+i}}$, for $s\in 2^n$, $n\geq n_l$. In the $n$th step define $\bar{d}^s$, for each $s=(s_i)\in 2^n$, in such a way that:
\begin{itemize}
	\item $\bar{d}^s$ is a concatenation of $\left(\genfrac{}{}{0pt}{}{2^n}{2m_l+2}\right)$ sequences of length $2$;	
	\item for every pairwise distinct $s_0,\ldots,s_{2m_l+1}\in 2^n$ (note that there are exactly $\left(\genfrac{}{}{0pt}{}{2^n}{2m_l+2}\right)$ ways to choose $s_0,\ldots,s_{2m_l+1}$) we can find $F=\{2p,2p+1\}$, for some $p<\left(\genfrac{}{}{0pt}{}{2^n}{2m_l+2}\right)$, such that: 
$$\{(\bar{d}^{s_j}_i)_{i\in F}:\ j\in 2m_l+2\}=$$
$$=\{(0,0)\}\cup\{(j\cdot 3^{k_{n-1}+2p-l},25\cdot 3^{k_{n-1}+2p+1}-1-j):\ j\in 2m_l+1\}.$$
\end{itemize}
By the construction, given any pairwise distinct $d_0,\ldots,d_{2m_l+1}\in D$, we have $\bigcap_{j=0}^{2m_l+1}(C_l-d_j)=\emptyset$. Hence, $C_l$ is Haar-$(2m_l+1)$.
\end{proof}

\section{Haar-finite sets}
\label{Haar-finite}

Recall that no countable union of Haar-finite sets can be the whole space, as Haar-finite sets are Haar-null and Haar-meager. In this section we show that a union of two Haar-finite sets does not have to be Haar-finite. This answers a question posed by Swaczyna during his talk on XLI Summer Symposium in Real Analysis (Wooster, 2017). Note also that, as a consequence of the next theorem, $\I$ being an ideal does not guarantee that $\h\I$ is an ideal.

\begin{thm}
\label{not ideal}
The family of Haar-finite subsets of $\R$ is not an ideal.
\end{thm}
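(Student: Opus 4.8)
The strategy is to produce two Haar-finite (indeed, $\sigma$-compact) subsets $A,B\subseteq\R$ whose union is not Haar-finite. The natural candidates come from the construction-toolkit developed in Sections $4$--$5$: the sets $C_l$ from Definition \ref{Cl}, together with Lemmas \ref{lem1} and \ref{lem2}, are tailor-made for this. Each $C_l$ is Haar-$(2m_l+1)$ by Lemma \ref{lem2}, hence in particular Haar-finite; and yet $C_l\cap I$ fails to be Haar-$(m_l-1)$ for every open $I$ meeting $C_l$ by Lemma \ref{lem1}. The idea is to choose a fast-growing sequence $l_0<l_1<l_2<\cdots$ (so that $m_{l_k}\to\infty$ very quickly) and to place scaled, translated copies of $C_{l_k}$ into disjoint compact pieces of $\R$, distributing them between two sets $A$ and $B$ in such a way that each of $A$ and $B$ \emph{individually} remains Haar-finite — this requires, for each of $A$ and $B$ separately, a single Cantor witness $D$ that works simultaneously for all the copies it contains, with a \emph{uniform} finite bound on $|(D+x)\cap A|$ — while $A\cup B$ contains, near any point, affine copies of arbitrarily ``wide'' $C_l$'s, so that no single Cantor set $D$ and no single $n$ can witness Haar-$n$-ness of $A\cup B$ for all $n$. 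Since Haar-finite means Haar-$n$ for \emph{some} fixed $n$, and any Cantor $D$ would, by the Haar-$(m_l-1)$ failure, force $|(D+x)\cap(A\cup B)|$ to exceed $m_l-1$ for suitable $x$ whenever a copy of $C_l$ sits in $A\cup B$, letting $l\to\infty$ kills Haar-finiteness of the union.

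\textbf{Key steps, in order.} First I would fix the placement: pick pairwise disjoint closed intervals $(I_k)_{k\in\omega}$ in $\R$ (say, accumulating at $0$, or just marching off to infinity — disjointness and a definite gap between consecutive ones is what matters), and inside $I_k$ put an affine (linear + translation) copy $\widetilde C_{l_k}$ of $C_{l_k}$; by the translation-invariance and linear-invariance of the families $\h n$ (the first Theorem quoted from \cite{1} in Section $2$), each $\widetilde C_{l_k}$ inherits exactly the Haar-$n$ behaviour of $C_{l_k}$. Second, split the index set $\omega$ into two infinite pieces $P_A\sqcup P_B$ and set $A=\overline{\bigcup_{k\in P_A}\widetilde C_{l_k}}$ (resp. $B$) — I will need to check this closure adds only the accumulation point(s) and does not create new ``wide'' obstructions; working with the intervals $I_k$ marching off to infinity (so the union is already closed, or $\sigma$-compact with a trivial controlled behaviour at infinity) sidesteps the subtlety. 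Third — the heart of the matter — I must show $A\in\h\Fin$: I need one Cantor set $D_A$ so that for every $x$, $(D_A+x)$ meets $A$ in at most $N$ points, with $N$ \emph{independent of $k$}. This is the step that does \emph{not} follow by simply quoting Lemma \ref{lem2}, because that lemma gives a \emph{different} witness Cantor set for each $C_l$ and the finite bound $2m_l+1$ blows up with $l$. The resolution is to exploit the spatial separation: since the copies $\widetilde C_{l_k}$ live in disjoint intervals $I_k$ separated by definite gaps, a translate $D_A+x$ of a single well-chosen Cantor set can hit at most two of the intervals $I_k$ (if $D_A$ is chosen of small enough diameter, relative to the gaps — or rather, if the $I_k$ are chosen spread out enough relative to a fixed $D_A$), and within each interval it must, by the construction of $D_A$ mimicking the Haar-witness-Cantor-set of the \emph{thinnest} relevant $C_l$, hit $\widetilde C_{l_k}$ in boundedly many points; a global bound like $N=2\cdot(2m_{l_0}+1)$ should work if all the $l_k$ with $k\in P_A$ are $\ge l_0$ and we build $D_A$ to be Haar-$(2m_{l_0}+1)$-compatible with the thinnest copy it needs to handle. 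Here I would have to be careful: really one wants the copies handled by $D_A$ to be \emph{rescaled} so that a single Cantor set's translate interacts with $\widetilde C_{l_k}$ exactly as the witness of Lemma \ref{lem2} would with $C_{l_k}$, which is where the affine freedom and the self-similar structure of the $C_l$'s are essential. Fourth, show $A\cup B\notin\h\Fin$: given any Cantor set $D\subseteq\R$ and any $n$, $D$ has a condensation point $x^\ast$, so arbitrarily small intervals around $x^\ast$ contain copies of $2^\omega$ inside $D$; but $A\cup B$ contains, in arbitrarily small neighbourhoods of suitable points (those near the limit of the $I_k$'s, or simply: $A\cup B$ contains $\widetilde C_{l_k}$ for all $k$, and we just need one $k$ with $m_{l_k}-1>n$), an affine copy of $C_{l_k}$ with $m_{l_k}-1>n$, and Lemma \ref{lem1} (transported through the affine map) says this copy intersected with the relevant small interval is not Haar-$(m_{l_k}-1)$, hence a fortiori not Haar-$n$; so $D$ cannot witness Haar-$n$ for $A\cup B$. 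As $n$ was arbitrary, $A\cup B$ is not Haar-finite.

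\textbf{Main obstacle.} The delicate point is unquestionably Step three — manufacturing a \emph{single} Cantor set $D_A$ (and $D_B$) with a \emph{uniform} finite bound across infinitely many pieces $\widetilde C_{l_k}$. The naive approach of reusing Lemma \ref{lem2} piece-by-piece fails because both the witness set and the bound depend on $l$. The fix has two ingredients that must be balanced against each other: (i) geometric separation of the intervals $I_k$ so that any translate of $D_A$ hits at most a bounded number (ideally one, at most two) of them — this constrains how $D_A$ and the $I_k$ sit relative to each other; and (ii) within a single $I_k$, arranging (by rescaling $\widetilde C_{l_k}$ and by choosing the ``local shape'' of $D_A$ on the part that can land in $I_k$) that $D_A$'s translate meets $\widetilde C_{l_k}$ in at most $2m_{l_0}+1$ points, the same bound for every $k\ge l_0$. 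Reconciling (i) and (ii) with the requirement that $D_A$ be globally homeomorphic to $2^\omega$ (a legitimate Cantor set, not a finite union of scaled Cantor sets with gaps that a translate could exploit in an unintended way) is the real combinatorial-geometric work; I expect it to run parallel to — and to reuse the bookkeeping of — the tree-indexed constructions in the proofs of Theorem \ref{one-third} and Lemma \ref{lem2}, where finitely many ``blocks'' are reserved to handle each finite tuple of branches of $D$. The ``compact, not Haar-$n$ for any $n$'' strengthening promised in the introduction would then come for free if one additionally arranges $A$ (or $A\cup B$, suitably modified) to be compact — e.g. by putting the $I_k$ in a bounded region accumulating at a single point and checking the limit point contributes nothing bad — but that refinement is a second pass once the main construction is in place.
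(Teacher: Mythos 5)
There is a genuine gap, and it sits exactly where your plan can least afford one: the claim that $A\cup B$ is not Haar-finite. You assert that ``Haar-finite means Haar-$n$ for some fixed $n$'', but that is not the definition: Haar-finiteness only requires a Cantor set $C$ with $(C+x)\cap B$ finite for \emph{every} $x$, with no uniform bound over $x$. Consequently your Step four proves only that $A\cup B$ is not Haar-$n$ for any $n$ (for each $n$ you produce some translate meeting the union in more than $n$ points), which is strictly weaker than ``not Haar-finite''. Indeed, one of the corollaries the paper derives from Theorem \ref{not ideal} is a \emph{compact Haar-finite set which is not Haar-$n$ for any $n$}, so the implication you rely on fails badly. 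Worse for your construction, the paper remarks explicitly that the spread-out union $\bigcup_{l\in\omega}(C_l+2l)$ \emph{is} Haar-finite; your $A\cup B$, a union of affine copies of the $C_{l_k}$ placed in pairwise disjoint, well-separated intervals, is of exactly that type. A single Cantor witness for such a union can be assembled block-by-block, interleaving for every $k$ blocks at the scale of $C_{l_k}$ (as in the proof of Lemma \ref{lem2}, and as in Steps 4--5 of the paper's proof, where the sequence $(w_n)$ revisits every $m_l$ infinitely often), while a bounded translate of it meets only finitely many of your intervals; so your union should be expected to be Haar-finite, i.e.\ not a counterexample at all.

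What is missing is a mechanism forcing, for an \emph{arbitrary} Cantor set $D$, a \emph{single} translate to meet the union in infinitely many points. The paper achieves this by nesting rather than separating: the pieces $X_n$ (portions of $C_n$) are placed at finer and finer digit scales inside gaps coded by the sets $L_n$, all accumulating on the auxiliary ``limit set'' $\{\phi((x_i)_i):\ \forall_{i}\ x_i\in L_i\}$, which is deliberately added to $X$; then, given any Cantor set, one extracts a decreasing sequence $(c_i)$ with $c_i<1/q_{i-1}$ and runs a digit-by-digit pigeonhole argument to produce one point $r$ of the limit set lying in $\bigcap_{i}(X-c_i)$. The subsequent split into $A$ and $B$ is not a split by which copy goes where, but a split of each $X_n$ by a digit condition (distance of the $n$-th digit to $L_n\pm\lfloor m_n/2\rfloor$), arranged so that translating either part by roughly $\lfloor m_n/2\rfloor/q_n$ pushes all deeper levels into a gap of that part; this is what allows a single Cantor set to witness Haar-finiteness of $A$ (and of $B$) even though the bounds $2m_n+1$ blow up -- no uniform bound such as your $2(2m_{l_0}+1)$ is needed, nor would one be available. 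The hard point your outline does not address is precisely this interaction of the two halves at one and the same translate, which only the nested construction together with the limit set resolves.
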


\begin{proof}
We need to construct $A,B\in\h\Fin$ and $X\notin\h\Fin$ with $A\cup B=X$. We will use notation from Definition \ref{Cl}. The proof consists of $5$ steps. 

\textbf{Step $1$: Construction of the set $X$. }

Define a function $\phi\colon S\to[0,1]$ by $\phi((x_i)_i)=\sum_{i\in\omega}\frac{x_i}{q_i}$.

Define $L_0=\{8,10\}$ and let $X_0$ be the set of all points $\phi((x_i)_i)$ such that $(x_i)_i\in S$, $x_0\notin L_0$ and $\phi((x_i)_i)\in C_0$. Equivalently, $X_0=C_{0}\setminus((\frac{8}{25},\frac{9}{25})\cup(\frac{10}{25},\frac{11}{25}))$. Thus, the set $L_0$ corresponds to gaps in the set $C_0$ -- we will fill those gaps with subsets of the sets $C_m$. Observe that $X_0$ is Haar-$(2m_0+1)$ by Lemma \ref{lem2}. 

For each $n\geq 1$ let $L_n=(3L_{n-1})\cup(3L_{n-1}+2)$. For each $n$ let $X_n$ consist of all points of the form $\phi((x_i)_i)$ such that $(x_i)_i\in S$, $\phi((x_i)_i)\in C_n$, $x_m\in L_m$, for all $m<n$, and $x_n\notin L_{n}$. Equivalently, 
$$X_n=C_n\cap\left(\bigcup_{l\in L_{n-1}}\left[\frac{l}{q_{n-1}},\frac{l+1}{q_{n-1}}\right]\right)\setminus\left(\bigcup_{l'\in L_{n}}\left(\frac{l'}{q_{n}},\frac{l'+1}{q_{n}}\right)\right).$$

Observe that $X_n$ is Haar-$(2m_n+1)$ by Lemma \ref{lem2}. Actually, even $\bigcup_{i\leq n}X_i$ is Haar-$(2m_n+1)$ (as $X_i\subseteq C_n$ for each $i\leq n$).

Notice that none $L_n$ contains two consecutive integers. One can inductively show that $\max L_n<m_n-1$, for all $n$ (in particular, the sets $X_n$ are well defined). Indeed, $\max L_0=10<11=m_0-1$ and we have: 
$$\max L_{n+1}=3\cdot\max L_{n}+2<3\cdot\left(\frac{25\cdot 3^n-1}{2}-1\right)+2<\frac{25\cdot 3^{n+1}-1}{2}-1=m_{n+1}-1.$$
Moreover, $\min L_n>\frac{m_n+1}{2}$, for all $n$, as $\min L_0=8>\frac{13}{2}=\frac{m_0+1}{2}$ and:
$$\min L_{n+1}=3\cdot\min L_{n}>3\cdot\frac{\frac{25\cdot 3^n-1}{2}+1}{2}>\frac{\frac{25\cdot 3^{n+1}-1}{2}+1}{2}=\frac{m_{n+1}+1}{2}.$$
In particular, $\frac{m_n}{2}<\min L_n<\max L_n<m_n$ for all $n$. 

The set $X$ is given by: 
$$X=\left(\bigcup_{n\in\omega}X_n\right)\cup\left\{\phi((x_i)_i):\ \forall_{i\in\omega}\ x_i\in L_i\right\}.$$

The purpose of adding the set $\left\{\phi((x_i)_i):\ \forall_{i\in\omega}\ x_i\in L_i\right\}$ is connected with showing that $X$ is not Haar-finite -- in the next step we will find an element of this set belonging to infinitely many translations of $X$. 

Observe that $X$ is compact. 

\textbf{Step $2$: $X$ is not Haar-finite. }

Before proving that $X$ is not Haar-finite, let us construct some auxiliary sets: for each $n,k\in\omega$ with $k\geq n$ let $T^k_n$ consist of all points of the form $\phi((x_i)_i)$ such that $(x_i)_i\in S$, $\phi((x_i)_i)\in Z^k_n$ (where $Z^k_n$ is as in the proof of Lemma \ref{lem1}) and $x_m\in L_m$, for all $m<n$. Define $T_n=\bigcup_{k\geq n}T^k_n$ for all $n$. Notice that $[0,1]\setminus T_n\subseteq X$ for each $n$.

Now we proceed to proving that $X$ is not Haar-finite. Fix any Cantor set $C\subseteq\mathbb{R}$ and choose a decreasing sequence $(c'_i)_i\subseteq C$ such that $c_0=c'_0-\inf C<\frac{12}{25}=\frac{m_0}{q_0}$ and $c_i=c'_i-\inf C<\frac{1}{q_{i-1}}$ for all $i\in\omega$, $i\geq 1$. 

Let $(c_{i,j})_j\in S$ be such that $c_i=\phi((c_{i,j})_j)$, for each $i\in\omega$. We will inductively construct a sequence $(r_i)_i$ with $r_i\in L_i$, for each $i\in\omega$. We start by picking $r_0\in L_0$ such that neither $\frac{r_0}{q_0}$ nor $\frac{r_0+1}{q_0}$ belongs to the set $T^0_0-\frac{c_{0,0}}{q_0}$. This is possible by the Pigeonhole Principle as $T^0_0-\frac{c_{0,0}}{q_0}$ (which is an interval of the form $\left[\frac{p}{q_0}.\frac{p+1}{q_0}\right)$ for some integer $p$) can exclude at most two consecutive values of $r_0\in q_0$, i.e., at most one element of $L_0=\{8,10\}$. Next, pick $r_1\in L_1$ such that neither $\sum_{j=0}^1 \frac{r_j}{q_j}$ nor $\sum_{j=0}^1 \frac{r_j}{q_j}+\frac{1}{q_1}$ belongs to the set 
$$\left(T^1_0-\sum_{j=0}^1 \frac{c_{0,j}}{q_j}\right)\cup\left(T^1_1-\sum_{j=0}^1 \frac{c_{1,j}}{q_j}\right).$$ 
Again, this is possible by the Pigeonhole Principle as $L_1=\{24,26,30,32\}$ and $T^1_0-\sum_{j=0}^1 \frac{c_{0,j}}{q_j}$ can exclude at most four consecutive values of $r_1\in 25\cdot 3$ (i.e., it cannot exclude simultaneously something from $\{24,26\}$ and something from $\{30,32\}$) while $T^1_1-\sum_{j=0}^1 \frac{c_{1,j}}{q_j}$ can exclude at most two consecutive values of $r_1\in 25\cdot 3$. Suppose now that $r_i$, for all $i<k$, are already defined. There is some $r_k\in L_k$ such that neither $\sum_{j=0}^k \frac{r_j}{q_j}$ nor $\sum_{j=0}^k \frac{r_j}{q_j}+\frac{1}{q_k}$ belongs to the set 
$$\bigcup_{i=0}^{k}\left(T^k_i-\sum_{j=0}^k \frac{c_{i,j}}{q_j}\right).$$
Similarly as above, each $T^k_i-\sum_{j=0}^k \frac{c_{i,j}}{q_j}$ excludes at most $3^{k-i}+1$ consecutive values of $r_k$ and one can inductively show that $T^k_0-\sum_{j=0}^k \frac{c_{0,j}}{q_j}$ excludes at most half of the set $L_k$, $T^k_1-\sum_{j=0}^k \frac{c_{1,j}}{q_j}$ excludes at most one-fourth of the set $L_k$ etc. (using the following observation: if there are $l,l'\in L_{k-1}$ with $(l,l')\cap L_{k-1}=\emptyset$ and $l'-l=3^j+1$, for some $j$, then $3l+2,3l'\in L_k$, $(3l+2,3l')\cap L_{k}=\emptyset$ and $(3l')-(3l+2)=3(3^{j}+1)-2=3^{j+1}+1$).

Once the induction is completed, observe that $r=\phi((r_i)_i)$ is an element of $X$. What is more, we will show that it belongs to $\bigcap_{i\in\omega}(X-c_i)$.

Fix $i\in\omega$. We will show that $r\notin T_i-c_i$. As $r+c_i\leq r+c_0<\frac{11}{25}+\frac{12}{25}<1$ (recall that $r_0\in L_0=\{8,10\}$) and $[0,1]\setminus T_i\subseteq X$, this will finish this step. 

Suppose to the contrary that $r\in T_i-c_i$. Thus, there is $k_0\geq i$ such that $r\in T_i^{k_0}-c_i$. Observe that:
$$r\in\left[\sum_{j=0}^{k_0} \frac{r_j}{q_j},\sum_{j=0}^{k_0} \frac{r_j}{q_j}+\frac{1}{q_{k_0}}\right].$$ 
Denote the above interval by $I$. By the choice of $r_{k_0}$, we know that endpoints of $I$ do not belong to the set $T^{k_0}_{i}-\sum_{j=0}^{k_0} \frac{c_{i,j}}{q_j}$, which is a union of intervals of the form $\left[\frac{p}{q_{k_0}},\frac{p+1}{q_{k_0}}\right)$, for some integer $p$. Thus, the distance between $\max I$ and $T^{k_0}_{i}-\sum_{j=0}^{k_0} \frac{c_{i,j}}{q_j}$ is at least $\frac{1}{q_{k_0}}$. Now it suffices to observe that $\sum_{j>k_0} \frac{c_{i,j}}{q_j}<\frac{1}{q_{k_0}}$. Hence, $I$ is disjoint with $T^{k_0}_{i}-\sum_{j=0}^{k_0} \frac{c_{i,j}}{q_j}$, a contradiction.

\textbf{Step $3$: Partition of $X$ into two sets $A$ and $B$.}

Let $B_0$ consist of those $\phi((x_i)_i)$ belonging to $X_0$ such that 
$$\text{dist}\left(\{x_0-\lfloor\frac{m_0}{2}\rfloor\},L_0\right)\leq 1.$$ 
Equivalently, $B_0=X_0\cap \left[\frac{7}{13},\frac{11}{13}\right]$.

For each $n\in\omega\setminus\{0\}$ let $B_n$ consist of those $\phi((x_i)_i)$ belonging to $X_n$ such that:
$$\text{dist}\left(\{x_n-\lfloor\frac{m_n}{2}\rfloor\},L_n\right)\leq 1$$ 
as well as those $\phi((x_i)_i)$ belonging to $X_{n}$ such that:
$$\text{dist}\left(\{x_n+\lfloor\frac{m_n}{2}\rfloor\},L_n\right)\leq 1.$$ 

Define $B=\bigcup_{n\in\omega}B_n$ and $A=\overline{X\setminus B}$. Observe that $B$ is $\mathtt{F_\sigma}$ while $A$ is compact. Moreover, $A\setminus(X\setminus B)$ consists of countably many points -- endpoints of intervals used in the definitions of sets $B_n$.

The idea is to make gaps in the set $A$ so that each set $\left\{\phi((x_i)_i):\ \forall_{i>n}\ x_i\in L_i\right\}$, for $n\in\omega$, can be translated (by $\frac{\lfloor\frac{m_n}{2}\rfloor}{q_n}$) into one of those gaps. Then the intersection of $A$ with the mentioned translation $A-\frac{\lfloor\frac{m_n}{2}\rfloor}{q_n}$ will be in fact a subset of $\bigcup_{i\leq n}C_i$ intersected with $\left(\bigcup_{i\leq n}C_i\right)-\frac{\lfloor\frac{m_n}{2}\rfloor}{q_n}$. As we already know that $\bigcup_{i\leq n}C_i$ is Haar-$(2m_n+1)$, the above observation will help us show that $A$ is Haar-finite. In the next step we will state this fact more precisely.

\textbf{Step $4$: $A$ is Haar-finite. }

We will construct a Cantor set $D$ witnessing that $A$ is Haar-finite. 

Define $k_0=0$ and $k_{n+1}=k_n+1+2\left(\genfrac{}{}{0pt}{}{2^n}{2w_n+2}\right)$, for $n>1$, where $(w_n)_n$ is any sequence onto $\{m_l:\ l\in\omega\}$ with $2w_n+2<2^n$, for all $n$, and $\{n\in\omega:\ w_n=m_l\}$ infinite for each $l\in\omega$. Pick inductively finite sequences $\bar{d}^s=(d^{s}_i)_i\in \omega^{k_n-k_{n-1}}$, for every $s\in 2^n$ and $n\geq 1$. At the end $D$ will consist of all points of the form $\sum_{n\geq 1}d_{x|n}$ for some $x\in 2^\omega$, where
$$d_{s}=\sum_{i=0}^{k_n-k_{n-1}-1}\frac{d^s_{i}}{q_{k_{n-1}+i}}$$ 
for $s\in 2^n$. In the $n$th step define $\bar{d}^s$, for each $s=(s_i)\in 2^n$, in such a way that:
\begin{itemize}
	\item $\bar{d}^s$ is a concatenation of $(s_{n-1}\cdot \lfloor\frac{m_{k_{n-1}}}{2}\rfloor)$ and $\left(\genfrac{}{}{0pt}{}{2^n}{2w_n+2}\right)$ sequences of length $2$;	
	\item for every pairwise distinct $s_0,\ldots,s_{2w_n+1}\in 2^n$ we can find $F=\{1+2p,1+2p+1\}$, for some $p<\left(\genfrac{}{}{0pt}{}{2^n}{2w_n+2}\right)$, such that: 
$$\left\{(\bar{d}^{s_j}_i)_{i\in F}:\ j\in 2w_n+2\right\}=$$
$$=\{(0,0)\}\cup\{(j\cdot 3^{k_{n-1}+1+2p-l},25\cdot 3^{k_{n-1}+2p+2}-1-j):\ j\in 2w_n+1\},$$
where $l$ is such that $w_n=m_l$.
\end{itemize}

Now we will show that $D$ is as needed. Given any infinite $\{x_j:\ j\in\omega\}\subseteq 2^\omega$, denote $d_j=\sum_{n\geq 1}d_{x_j|n}\in D$, for each $j\in\omega$. Our goal is to prove that $\bigcap_{j\in\omega}(A-d_j)=\emptyset$. 

Let $\tilde{n}$ be minimal such that $x_{j_0}|\tilde{n}\neq x_{j_1}|\tilde{n}$ for some $j_0,j_1\in\omega$. Without loss of generality we can assume that $x_{j}|\tilde{n}=x_{j_1}|\tilde{n}$ for infinitely many $j\in\omega$. Denote $\tilde{k}=k_{\tilde{n}-1}$. The crucial observation is the following:
$$(A-d_{j_0})\cap(A-d_{j})=\left(\left(\bigcup_{l\leq \tilde{k}}X_l\right)-d_{j_0}\right)\cap\left(\left(\bigcup_{l\leq \tilde{k}}X_l\right)-d_{j}\right),$$
whenever $x_{j}|\tilde{n}=x_{j_1}|\tilde{n}$. Indeed, 
$$\left(A\cap \bigcup_{l>\tilde{k}}X_l\right)-\frac{\lfloor\frac{m_{\tilde{k}}}{2}\rfloor}{q_{\tilde{k}}}\subseteq\left\{\phi((x_i)_i):\ (x_i)_i\in S\ \wedge\ \forall_{l\leq \tilde{k}}x_{l}\in L_{l}\right\}-\frac{\lfloor\frac{m_{\tilde{k}}}{2}\rfloor}{q_{\tilde{k}}}=$$
$$=\left\{\phi((x_i)_i):\ (x_i)_i\in S\ \wedge\ \forall_{l<\tilde{k}}x_{l}\in L_{l}\ \wedge\ x_{\tilde{k}}\in L_{\tilde{k}}-\lfloor\frac{m_{\tilde{k}}}{2}\rfloor\right\}.$$
Denote the latter set by $Y$ and observe that $Y\cap\bigcup_{l>\tilde{k}}X_l=\emptyset$ (as $L_{\tilde{k}}\cap(L_{\tilde{k}}-\lfloor\frac{m_{\tilde{k}}}{2}\rfloor)=\emptyset$ by $\max L_{\tilde{k}}<m_{\tilde{k}}-1$ and $\min L_{\tilde{k}}>\frac{m_{\tilde{k}}+1}{2}$) and $Y\cap\bigcup_{l<\tilde{k}}X_l=\emptyset$ (as $\min L_{\tilde{k}}-\lfloor\frac{m_{\tilde{k}}}{2}\rfloor>0$). Thus, $Y$ can intersect only $A\cap X_{\tilde{k}}$. However, all $\phi((x_i)_i)$ belonging to $X_{\tilde{k}}$ such that $x_{\tilde{k}}\in L_{\tilde{k}}-\lfloor\frac{m_{\tilde{k}}}{2}\rfloor$ are in $B_{\tilde{k}}$, hence not in $A$. Therefore, $\left(A\cap \bigcup_{l>\tilde{k}}X_l\right)-\frac{\lfloor\frac{m_{\tilde{k}}}{2}\rfloor}{q_{\tilde{k}}}$ does not intersect $A$. Actually, the above reasoning shows that the distance between those two sets is at least $\frac{1}{q_{\tilde{k}}}$. Hence, $\left(A\cap \bigcup_{l>\tilde{k}}X_l\right)-d_j$ is disjoint with $A-d_{j'}$ whenever $x_j|\tilde{n}$ ends with $1$ and $x_{j'}|\tilde{n}$ ends with $0$. A similar reasoning shows that $\left(A\cap \bigcup_{l>\tilde{k}}X_l\right)-d_{j'}$ is disjoint with $A-d_{j}$ whenever $x_j|\tilde{n}$ ends with $1$ and $x_{j'}|\tilde{n}$ ends with $0$.

Recall that $\bigcup_{i\leq\tilde{k}}X_i\subseteq C_{\tilde{k}}$ and $C_{\tilde{k}}$ is Haar-$(2m_{\tilde{k}}+1)$. Let $j_2,\ldots,j_{2m_{\tilde{k}}+2}$ be pairwise distinct and such that $x_{j_i}|\tilde{n}=x_{j_1}|\tilde{n}$ for $i=1,\ldots,2m_{\tilde{k}}+2$. Then, if $\tilde{m}\geq\tilde{n}$ is such that $x_{j_i}|\tilde{m}$, for $i=1,\ldots,2m_{\tilde{k}}+2$, are pairwise distinct and $w_{\tilde{m}}=m_{\tilde{k}}$, then for some $F=\{1+2p,1+2p+1\}$, where $p<\left(\genfrac{}{}{0pt}{}{2^n}{2w_{\tilde{m}}+2}\right)$, we have:
$$\left\{(\bar{d}^{x_{j_i}|\tilde{m}}_i)_{i\in F}:\ j=1,\ldots,2m_{\tilde{k}}+2\right\}=$$
$$=\{(0,0)\}\cup\{(j\cdot 3^{k_{\tilde{m}-1}+1+2p-\tilde{k}},25\cdot 3^{k_{\tilde{m}-1}+2p+2}-1-j):\ j\in 2w_{\tilde{m}}+1\}.$$
Thus, similarly as in the proof of Lemma \ref{lem1}, one can show that 
$$\bigcap_{i=1}^{2m_{\tilde{k}}+2}\left(\left(\bigcup_{l\leq \tilde{k}}X_l\right)-d_{j_i}\right)\subseteq \bigcap_{i=1}^{2m_{\tilde{k}}+2} (C_{\tilde{k}}-d_{j_i})=\emptyset.$$ 

Therefore,
$$\bigcap_{i\leq 2m_{\tilde{k}}+2}\left(A-d_{j_i}\right)=\bigcap_{i=1}^{2m_{\tilde{k}}+2}\left(\left(A-d_{j_i}\right)\cap\left(A-d_{j_0}\right)\right)=$$
$$=\bigcap_{i=1}^{2m_{\tilde{k}}+2}\left(\left(\left(\bigcup_{l\leq\tilde{k}}X_l\right)-d_{j_i}\right)\cap\left(\left(\bigcup_{l\leq\tilde{k}}X_l\right)-d_{j_0}\right)\right)\subseteq$$
$$\subseteq\bigcap_{i=1}^{2m_{\tilde{k}}+2}\left(\left(\bigcup_{l\leq \tilde{k}}X_l\right)-d_{j_i}\right)=\emptyset.$$

\textbf{Step $5$: $B$ is Haar-finite. }

We will construct a Cantor set $E$ witnessing that $B$ is Haar-finite. 

Define $u_0=0$ and $u_{n+1}=u_n+2+2\left(\genfrac{}{}{0pt}{}{2^n}{2w_n+2}\right)$, for $n>1$, where $(w_n)_n$ is as in the previous step. Inductively pick finite sequences $\bar{e}^s=(e^{s}_i)\in \omega^{u_n-u_{n-1}}$, for every $s\in 2^n$ and $n\geq 1$: $\bar{e}^s$, for each $s=(s_i)\in 2^n$, is such that:
\begin{itemize}
	\item $\bar{e}^s$ is a concatenation of $(s_{n-1},0)$ and $\left(\genfrac{}{}{0pt}{}{2^n}{2w_n+2}\right)$ sequences of length $2$;	
	\item for every pairwise distinct $s_0,\ldots,s_{2w_n+1}\in 2^n$ we can find $F=\{2+2p,3+2p\}$, for some $p<\left(\genfrac{}{}{0pt}{}{2^n}{2w_n+2}\right)$, such that: 
$$\left\{(\bar{e}^{s_j}_i)_{i\in F}:\ j\in 2w_n+2\right\}=$$
$$=\{(0,0)\}\cup\{(j\cdot 3^{k_{n-1}+2+2p-l},25\cdot 3^{k_{n-1}+2p+3}-1-j):\ j\in 2w_n+1\},$$
where $l$ is such that $w_n=m_l$.
\end{itemize}
Let $E$ consist of all points of the form $\sum_{n\geq 1}e_{x|n}$, for some $x\in 2^\omega$, where $e_{s}=\sum_{i=0}^{u_n-u_{n-1}-1}\frac{e^s_{i}}{q_{k_{n-1}+i}}$, for $s\in 2^n$. 

Now, given any infinite $\{(x_j):\ j\in\omega\}\subseteq 2^\omega$, denote $e_j=\sum_{n\geq 1}e_{x_j|n}\in E$, for each $j\in\omega$. Define $\tilde{n}$, $\tilde{k}$, $j_0$ and $j_1$ similarly as in the previous step. Again, we have:
$$(B-e_{j_0})\cap(B-e_{j})=\left(\bigcup_{l\leq \tilde{k}}(X_l-e_{j_0})\right)\cap\left(\bigcup_{l\leq \tilde{k}}(X_l-e_{j})\right),$$
whenever $x_{j}|\tilde{n}=x_{j_1}|\tilde{n}$. Indeed, 
$$\left(\bigcup_{l>\tilde{k}}X_l\right)-\frac{1}{q_{\tilde{k}}}\subseteq\left\{\phi((x_i)_i):\ (x_i)_i\in S\ \wedge\ \forall_{l<\tilde{k}}x_{l}\in L_{l}\ \wedge\ x_{\tilde{k}}\in L_{\tilde{k}}-1\right\}.$$
Denote the latter set by $Y'$ and observe that $Y'\cap\bigcup_{l>\tilde{k}}X_l=\emptyset$ (as $L_{\tilde{k}}$ does not contain any two consecutive integers) and $Y'\cap\bigcup_{l\leq \tilde{k}}(B\cap X_l)=\emptyset$, since 
$$\bigcup_{l\leq \tilde{k}}(B\cap X_l)\subseteq \left\{\phi((x_i)_i):\ (x_i)_i\in S\ \wedge\ \exists_{l<\tilde{k}}x_{l}\notin L_{l}\right\}\cup$$
$$\cup\left\{\phi((x_i)_i):\ (x_i)_i\in S\ \wedge\ \forall_{l<\tilde{k}}x_{l}\in L_{l}\ \wedge\ \text{dist}(x_{\tilde{k}},L_{\tilde{k}})\geq \lfloor\frac{m_{\tilde{k}}}{2}\rfloor-1\right\}.$$
Therefore, $\left(\bigcup_{l>\tilde{k}}X_l\right)-\frac{1}{q_{\tilde{k}}}$ does not intersect $B$. What is more, the above reasoning shows also that the distance between those two sets is at least $\frac{1}{q_{\tilde{k}+1}}$. Hence, $\left(\bigcup_{l>\tilde{k}}X_l\right)-d_j$ is disjoint with $B-d_{j'}$ whenever $x_j|\tilde{n}\neq x_{j'}|\tilde{n}$. 

The rest of the proof is exactly the same as in the previous step. This finishes the entire proof.
\end{proof}

The above result can be transferred to the case of null-finite sets. Recall that a subset $A$ of an abelian Polish group $X$ is null-finite if there is a convergent sequence $(x_n)\subseteq X$ such that $\{n\in\omega:\ x_n-x\in A\}$ is finite for all $x\in X$. Clearly, each Haar-finite set is null-finite -- a witness for a null-finite set is compact (i.e., the compact set $\{x_n:\ n\in\omega\}\cup\{\lim_n x_n\}$ has an analogous property to that of witnesses of Haar-finite sets), but it does not have to be uncountable as in the case of Haar-finite sets. Moreover, it is easy to observe that for $A\neq\emptyset$ the witnessing sequence $(x_n)\subseteq X$ has to have infinitely many values.

The question whether Borel null-finite sets form an ideal was posed in the first version of \cite{2} and asked by Banakh during his talk at the conference Frontiers of Selection Principles (Warsaw, 2017). It should be mentioned that each non-discrete metric abelian group $X$ is a union of two null-finite sets. Thus, the restriction to Borel sets in the above question is crucial. 

\begin{cor}
The family of Borel null-finite subsets of $\R$ is not an ideal.
\end{cor}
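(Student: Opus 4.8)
The plan is to recycle the construction from Theorem~\ref{not ideal} verbatim, replacing each Cantor-set witness by a convergent sequence, and to check that the same disjointness computations still force the relevant intersections to be empty. Recall that the sets $A$ and $B$ produced in Theorem~\ref{not ideal} are Borel (in fact $A$ is compact and $B$ is $\mathtt{F_\sigma}$), $A\cup B=X$, and $X$ is not Haar-finite. Since every null-finite set is in particular required to have a convergent-sequence witness, and we already showed $X$ fails the stronger property for \emph{every} Cantor set, I first need to check that $X$ is also not null-finite: the argument in Step~2 of Theorem~\ref{not ideal} starts from an arbitrary Cantor set $C\subseteq\R$ but really only uses a decreasing sequence $(c'_i)_i\subseteq C$ with $c_0=c'_0-\inf C<\frac{12}{25}$ and $c_i=c'_i-\inf C<\frac{1}{q_{i-1}}$; given any convergent sequence one may pass to a subsequence and translate so that it becomes such a decreasing sequence tending to $0$, and then the point $r=\phi((r_i)_i)$ constructed there lies in $\bigcap_{i\in\omega}(X-c_i)$, showing $\{i:\ c_i+x\in X\}$ is infinite for $x=r-\text{(limit)}$. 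Hence $X$ is not null-finite.

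Next I would verify that $A$ and $B$ are null-finite. Here the point is that the Cantor sets $D$ and $E$ built in Steps~4 and~5 of Theorem~\ref{not ideal} can be replaced by convergent sequences: instead of branching at every level over all of $2^n$, one follows a single branch $x\in 2^\omega$ together with the countably many points $\sum_{n\geq1}d_{x|n}$ obtained by flipping finitely many coordinates of $x$ — more simply, one takes the images under the homeomorphism $2^\omega\to D$ of a single convergent sequence in $2^\omega$ (say the sequence $0^k1\,\bar 0\to\bar 0$). What made Step~4 work was the combinatorial fact that, for any infinite subset $\{x_j\}\subseteq 2^\omega$, fixing the first branching level $\tilde n$ and then choosing $\tilde m\geq\tilde n$ with $w_{\tilde m}=m_{\tilde k}$ and the $x_{j_i}|\tilde m$ pairwise distinct, the block $F$ realizes the ``staircase'' pattern that collapses $\bigcap(C_{\tilde k}-d_{j_i})$ to $\emptyset$; this uses only that the index set is infinite, so it applies equally to an infinite subset of a convergent sequence. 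The same remark applies to Step~5. Thus $A$ and $B$ remain null-finite with these sequential witnesses.

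The main obstacle, and the only place needing genuine care, is bookkeeping: one must re-examine the ``crucial observation'' displays in Steps~4 and~5 — e.g. $(A-d_{j_0})\cap(A-d_j)=\big((\bigcup_{l\leq\tilde k}X_l)-d_{j_0}\big)\cap\big((\bigcup_{l\leq\tilde k}X_l)-d_j\big)$ whenever $x_j|\tilde n=x_{j_1}|\tilde n$ — and confirm that the distance estimates (at least $\frac{1}{q_{\tilde k}}$, resp. $\frac{1}{q_{\tilde k+1}}$) between the ``tail'' parts $\big(A\cap\bigcup_{l>\tilde k}X_l\big)-\frac{\lfloor m_{\tilde k}/2\rfloor}{q_{\tilde k}}$ and $A$ are unaffected by passing from a Cantor set of witnesses to a sequence. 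Since those estimates depend only on the arithmetic of the sets $L_n$, $X_n$, $C_n$ and the chosen block offsets — not on the cardinality of the witness set — they go through unchanged. Assembling these observations: $A$ and $B$ are Borel null-finite, $A\cup B=X$, and $X$ is not null-finite, so the family of Borel null-finite subsets of $\R$ is not closed under unions of two, hence not an ideal.
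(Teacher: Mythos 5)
There is a genuine gap at the central claim that $X$ itself is not null-finite. Your reduction --- ``given any convergent sequence one may pass to a subsequence and translate so that it becomes such a decreasing sequence tending to $0$'' --- is false: an injective sequence that increases to its limit (e.g.\ $x_n=L-\frac{1}{n}$) has no decreasing subsequence, and translating it does not change the side from which it approaches the limit. The Step~2 argument of Theorem~\ref{not ideal} genuinely needs the offsets $c_i=x_{k_i}-\lim_n x_n$ to be \emph{positive}: they are expanded with nonnegative digits as $(c_{i,j})_j\in S$, the constructed point satisfies $r+c_i\in[0,1]$, and the pigeonhole avoidance of the sets $T^k_i-\sum_{j\leq k}\frac{c_{i,j}}{q_j}$ concerns leftward translates of $X$ only. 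For a witnessing sequence approaching its limit from below all the $c_i$ are negative and the argument as written does not apply; nothing in your proposal rules out that $X$ itself is null-finite via such a witness. This is precisely why the paper does not take $X=A\cup B$ but the symmetrized set $Y=X\cup(-X)$, presented as a union of the four Haar-finite (hence null-finite) sets $A$, $B$, $-A$, $-B$: any injective convergent sequence has a decreasing or an increasing subsequence; the decreasing case is handled by $X$ exactly as in Step~2, and the increasing case is handled by $-X$ after negation (one finds $r\in\bigcap_i(X-c_i)$ with $c_i=\lim_n x_n-x_{k_i}>0$ and observes $-r\in\bigcap_i(-X-(-c_i))$).

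A second, minor point: your second and third paragraphs re-verify by hand that $A$ and $B$ admit sequential witnesses, but this is immediate and already noted in the paper --- every Haar-finite set is null-finite, since an injective convergent sequence inside the witnessing Cantor set $D$ inherits the finiteness of $D\cap(B+x)$ for every $x$. So the careful sequential re-run of Steps~4--5 is unnecessary; the missing ingredient is the symmetrization trick needed to defeat increasing witnesses. Either adopt the paper's $Y=X\cup(-X)$ (at the cost of using four null-finite pieces instead of two), or supply a genuinely new avoidance argument for negative offsets, which the current proposal does not contain.
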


\begin{proof}
It suffices to consider the set $Y=X\cup -X$, where $X$ is as in the previous proof. Then $Y$ is a union of four Haar-finite (so also null-finite) sets: $A$, $B$, $-A$ and $-B$. However, one can show that $Y$ is not null-finite by setting any injective convergent sequence $(x_n)_n$ and considering two cases: either it has a decreasing subsequence (in this case find $(x_{k_n})_n$ such that $c_0=x_{k_0}-\lim_n x_n\in (0,\frac{m_0}{q_0})$ and $c_n=x_{k_n}-\lim_n x_n\in(0,\frac{1}{q_{i-1}})$, for all $i\in\omega$, $i\neq 0$, and proceed exactly as in the second step above) or an increasing one (in this case $-X$ is used: find $(x_{k_n})_n$ such that $-c_0=x_{k_0}-\lim_n x_n\in(-\frac{m_0}{q_0},0)$ and $-c_n=x_{k_n}-\lim_n x_n\in(-\frac{1}{q_{i-1}},0)$, for all $i\in\omega$, $i\neq 0$, find $r\in\bigcap_{i\in\omega}(X-c_i)$ in the same way as in the second step above, and observe that $-r\in\bigcap_{i\in\omega}(-X-(-c_i))$ is as needed).
\end{proof}

It is possible to construct a Haar-finite subset of $\R$ which is not Haar-$n$, for any $n\in\omega$. Namely, it can be shown that $\bigcup_{l\in\omega}(C_l+2l)$ is Haar-finite. However, we omit this proof, since we have the following compact example.

\begin{cor}
There is a compact Haar-finite subset of $\R$ which is not Haar-$n$, for any $n\in\omega$. Therefore, $\overline{\h\Fin}\setminus\bigcup_{n\in\omega}\h n\neq\emptyset$.
\end{cor}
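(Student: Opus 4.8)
The plan is to build a compact Haar-finite set that is not Haar-$n$ for any fixed $n$ by ``disjointly packing'' countably many compact witnesses of increasing complexity. Concretely, I would work with the sets $C_l$ from Definition \ref{Cl} (or rather suitable affine copies $\widetilde C_l$ sitting inside pairwise disjoint closed intervals) and let $K=\{0\}\cup\bigcup_{l\in\omega}\widetilde C_l$, where the $\widetilde C_l$ are chosen so that their diameters and positions tend to $0$; then $K$ is compact. By Lemma \ref{lem2}, each $C_l$ is Haar-$(2m_l+1)$, and since $m_l=\frac{25\cdot 3^l-1}{2}\to\infty$, the complexity of these pieces is unbounded. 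The two things to verify are: (1) $K$ is Haar-finite, and (2) $K$ is not Haar-$n$ for any $n\in\omega$.

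For (2), fix $n$ and choose $l$ with $m_l-1>n$. Then $\widetilde C_l\subseteq K$, and by Lemma \ref{lem1} (applied to $C_l$ and transported by the affine map) $\widetilde C_l\cap I$ is not Haar-$(m_l-1)$ for every open interval $I$ meeting it; in particular $\widetilde C_l$ is not Haar-$n$. Since $\h n$ is a semi-ideal and $\widetilde C_l\subseteq K$, it follows that $K\notin\h n$. So (2) is essentially immediate from Lemma \ref{lem1}.

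The substance is (1): constructing a single Cantor set $D\subseteq\R$ witnessing $K\in\h\Fin$, i.e., such that $\bigcap_{j\in\omega}(K-d_j)=\emptyset$ for every injective sequence $(d_j)\subseteq D$. This is where I would mimic the tree-construction technique used in the proof of Theorem \ref{not ideal} (Steps $4$ and $5$) and in Lemma \ref{lem2}. The idea: since the pieces $\widetilde C_l$ occupy pairwise disjoint intervals with diameters shrinking to $0$, for any injective $(d_j)$ with differences small enough, all the translates $K-d_j$ that can possibly meet simultaneously must do so inside (a neighbourhood of) one fixed piece $\widetilde C_l$ — the index $l$ being governed by the scale of the differences $d_j-d_{j_0}$. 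Then I would arrange, exactly as in Lemma \ref{lem2}, that along the tree $2^{<\omega}$ defining $D$, whenever $2m_l+2$ branches have split, the corresponding blocks of ternary-type digits realize the configuration $\{(0,0)\}\cup\{(j\cdot 3^{k-l},25\cdot 3^{k+1}-1-j):j\in 2m_l+1\}$ that forces $\bigcap_{i=0}^{2m_l+1}(C_l-\text{(those differences)})=\emptyset$. Since $m_l$ ranges over an infinite set as the scale varies, one must interleave the requirements for all $l$ simultaneously — using a bookkeeping sequence $(w_n)$ onto $\{m_l:l\in\omega\}$ with each value attained infinitely often, just as in Steps $4$ and $5$ of Theorem \ref{not ideal}. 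Given an injective $(d_j)$, one locates the first splitting level $\tilde n$, reads off the relevant scale $\tilde k$, hence $l$ with $m_l$ matching that scale, waits for $2m_l+2$ of the branches to become pairwise distinct at some later level where the bookkeeping hits $m_l$, and concludes emptiness of the intersection from the embedded $C_l$-configuration.

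The main obstacle I anticipate is the careful verification that distinct translates $K-d_j$ cannot ``leak'' between different pieces $\widetilde C_l$ and $\widetilde C_{l'}$: one needs the geometric separation (gaps between the intervals housing the $\widetilde C_l$, and the lengths of those intervals versus the magnitudes of the differences $d_j-d_{j_0}$) to be rigidly controlled so that a common point of two translates pins down a single index $l$. This is exactly the role played by the ``distance at least $\frac{1}{q_{\tilde k}}$'' estimates in Steps $4$ and $5$, and adapting those estimates to the new arrangement — in particular choosing the placement and sizes of the $\widetilde C_l$ and the growth of the digit-block lengths $k_n$ compatibly — is the delicate bookkeeping part of the argument. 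Everything else (compactness of $K$, the semi-ideal passage for (2), the Pigeonhole step inside each block) is routine given the lemmas already proved.
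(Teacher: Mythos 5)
Your part (2) is fine: Lemma \ref{lem1} plus invariance under affine maps and the fact that $\h n$ is a semi-ideal does show that any set containing an affine copy of $C_l$ with $m_l-1>n$ fails to be Haar-$n$. The genuine gap is in part (1). You do not prove that your set $K=\{0\}\cup\bigcup_{l}\widetilde C_l$ is Haar-finite; you only sketch a plan and explicitly defer the "no leakage between pieces" verification, and that step is not routine in your arrangement. The reason the analogous non-compact set $\bigcup_l(C_l+2l)$ (which the paper mentions and deliberately omits) is tractable is that its pieces are $1$-separated, so a Cantor witness $D$ of diameter less than $1$ forces all points $z+d_j$ into a single piece, after which the Lemma \ref{lem2}-style killing configurations for that one $C_l$ apply. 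In your compact version the pieces accumulate at $0$ with diameters tending to $0$, so every window whose size is comparable to the splitting scales of $D$ contains infinitely many whole pieces $\widetilde C_{l'}$; a common point $z$ of infinitely many translates $K-d_j$ can then have $z+d_j$ hopping between different pieces as $j$ varies, and the digit-block configurations of Lemma \ref{lem2}, which are tied to a single $C_l$ and its scale $q_k$, give no contradiction. Ruling this out requires a genuine mechanism coordinating the placement and sizes of the $\widetilde C_l$ with the digit structure of $D$ -- this is exactly the role of the gap sets $L_n$, the pieces $X_n$ nested in those gaps, and the $A/B$ split with the $\frac{1}{q_{\tilde k}}$-distance estimates in Step 4 of Theorem \ref{not ideal}. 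In effect your plan amounts to re-proving that step for a new set, and the hard part is left undone.

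You could also have avoided the difficulty entirely: the paper's own proof simply takes the set $A$ from the proof of Theorem \ref{not ideal}, which is already known to be compact and Haar-finite, and the only new work is to locate, for each $n$, an open interval $I$ with $\emptyset\neq C_n\cap \mathrm{int}\, I\subseteq X_n\setminus B\subseteq A$, so that Lemma \ref{lem1} yields $A\notin\h{(m_n-1)}$ and hence $A\notin\h n$ for every $n$. If you want to keep your construction, you must either carry out the full witness construction with a precise choice of the positions and scales of the $\widetilde C_l$ (proving the separation estimates that pin a common point to one piece), or replace the accumulating arrangement by one in which the reduction to a single piece is forced -- at which point compactness, the whole point of the corollary, is what you have to fight for.
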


\begin{proof}
It suffices to consider the set $A$ from the proof of Theorem \ref{not ideal}. We already know that it is compact and Haar-finite. To show that it is not Haar-$n$, for any $n\in\omega$, observe that $X_n\setminus B\subseteq\overline{X_n\setminus B}\subseteq \overline{X\setminus B}=A$, for each $n$. Therefore, by Lemma \ref{lem1}, it suffices to prove that $C_n\cap I\subseteq X_n\setminus B$ for some open interval $I$ with $C_n\cap I\neq\emptyset$. 

Recall that:
$$X_n=C_n\cap\left(\bigcup_{l\in L_{n-1}}\left[\frac{l}{q_{n-1}},\frac{l+1}{q_{n-1}}\right]\right)\setminus\left(\bigcup_{l'\in L_{n}}\left(\frac{l'}{q_{n}},\frac{l'+1}{q_{n}}\right)\right)$$
and $\max L_n<m_n-1$. Thus, for
$$I=\left\{\phi((x_i)_i)\in [0,1]:\ \forall_{i<n}\ x_i\in L_i\ \wedge\ x_n=2m_n\right\}$$
we have $\emptyset\neq\text{int} I\cap C_n\subseteq X_n$.

Moreover, notice that: 
$$B\cap X_n=\left\{\phi((x_i)_i)\in X_n:\ \text{dist}\left(\{x_n-\lfloor\frac{m_n}{2}\rfloor\},L_n\right)\leq 1\ \vee \right.$$ 
$$\left.\vee\ \text{dist}\left(\{x_{n}+\lfloor\frac{m_n}{2}\rfloor\},L_{n}\right)\leq 1\right\}.$$
Thus, if $\phi((x_i)_i)\in B\cap X_n$, then $x_n\leq \max L_n+\lfloor\frac{m_n}{2}\rfloor+1<\frac{3}{2}m_n$ (the last inequality is due to $\max L_n<m_n-1$). Hence, $(\text{int} I\cap C_n)\cap B=\emptyset$ and we are done.
\end{proof}

Observe that the above example is a countable union of sets, each of which is Haar-$n$ for some $n$. A natural question is whether this is the case for all Haar-finite sets. Actually, we do not know any example denying that observation. 

\begin{question}
\label{question1}
Is it true that $\h\Fin\subseteq\sigma\left(\bigcup_{n\in\omega}\h n\right)$, i.e., is each Haar-finite set a countable union of sets, each of which is Haar-$n$ for some $n$.?
\end{question}

Although, we have a counterexample for the opposite inclusion.

\begin{cor}
A countable union of compact subsets of $\R$, each of which is Haar-$n$ for some $n$, does not have to be Haar-finite, i.e., $\sigma\left(\bigcup_{n\in\omega}\overline{\h n}\right)\setminus\h\Fin\neq\emptyset$.
\end{cor}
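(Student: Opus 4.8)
The plan is to use as witness the compact set $X$ constructed in the proof of Theorem \ref{not ideal}: it is not Haar-finite, and it decomposes as a countable union of compact Haar-$n$ sets. Recall from that proof that
$$X=\left(\bigcup_{n\in\omega}X_n\right)\cup K,\qquad K=\left\{\phi((x_i)_i):\ (x_i)_i\in S\ \wedge\ \forall_{i\in\omega}\ x_i\in L_i\right\},$$
and that $X\notin\h\Fin$ is already established there (Step $2$). Hence it remains only to verify that each $X_n$ and $K$ is a compact member of $\overline{\h k}$ for a suitable $k$, which will give $X\in\sigma\left(\bigcup_{n\in\omega}\overline{\h n}\right)$ and finish the proof.

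For the sets $X_n$ this is routine bookkeeping: each $X_n$ is the intersection of the closed set $C_n$ with a finite union of closed intervals, minus a finite union of open intervals, hence closed and bounded, i.e., compact; and since $X_n\subseteq C_n$ and $C_n\in\h(2m_n+1)$ by Lemma \ref{lem2} (and $\h(2m_n+1)$ is a semi-ideal), we get $X_n\in\overline{\h(2m_n+1)}$. For $K$ I would first note that it is compact, being the image of the compact product $\prod_{i\in\omega}L_i$ (each $L_i$ finite and discrete) under the continuous map $(x_i)_i\mapsto\sum_{i\in\omega}\frac{x_i}{q_i}$, and then prove $K\in\overline{\h 1}$ by checking, via Proposition \ref{null-1}, that $K-K$ has empty interior.

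The verification that $K-K$ is nowhere dense is the only genuinely new point, so I expect it to be the main obstacle (though it is elementary). One has $K-K=\left\{\sum_{i\in\omega}\frac{c_i}{q_i}:\ \forall_{i\in\omega}\ c_i\in L_i-L_i\right\}$; all elements of each $L_i-L_i$ are even (by induction, all of $L_i$ is even), while $0,2\in L_n-L_n$ for every $n$. Consequently, for each $n$ the level-$n$ cylinders corresponding to the choices $c_n$ and $c_n+2$ (with $c_0,\ldots,c_{n-1}$ fixed) stay disjoint, and $K-K$ leaves an open gap between them, provided the tail bound
$$\sum_{i>n}\frac{\max L_i-\min L_i}{q_i}<\frac{1}{q_n}$$
holds. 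This follows immediately from the inequalities $\frac{m_i}{2}<\min L_i\leq\max L_i<m_i$ proved in Step $1$ of the proof of Theorem \ref{not ideal} together with $q_i=25\cdot 3^i\cdot q_{i-1}$, since then $\sum_{i>n}\frac{\max L_i-\min L_i}{q_i}<\sum_{i>n}\frac{25\cdot 3^i}{2q_i}=\sum_{i>n}\frac{1}{2q_{i-1}}<\frac{1}{q_n}$. Thus $K-K$ is nowhere dense, so $K\in\overline{\h 1}$ by Proposition \ref{null-1}, and therefore $X=\left(\bigcup_{n\in\omega}X_n\right)\cup K$ lies in $\sigma\left(\bigcup_{n\in\omega}\overline{\h n}\right)\setminus\h\Fin$, as required.
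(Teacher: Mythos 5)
Your witness and decomposition are exactly the paper's: the set $X$ from Theorem \ref{not ideal}, written as $\bigcup_{n}X_n$ together with $K=\{\phi((x_i)_i):\ \forall_i\ x_i\in L_i\}$, with $X_n\in\overline{\h (2m_n+1)}$ via $X_n\subseteq C_n$ and Lemma \ref{lem2}, and with non-Haar-finiteness quoted from Step 2 of that proof. The only divergence is how you certify $K\in\h 1$: the paper applies condition (2) of Proposition \ref{null-1} directly, observing that $\frac{1}{q_n}\to 0$ and $\frac{1}{q_n}\notin K-K$ because no $L_n$ contains two consecutive integers, whereas you verify condition (3), that $K-K$ has empty interior, via the parity of the sets $L_i$. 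Your parity idea does work, but as written it needs two repairs: with the tail bound you state (strictly less than $\frac{1}{q_n}$), the level-$n$ cylinders around prefix values $s$ and $s+\frac{2}{q_n}$ merely touch rather than leave an open gap, so you need the sharper bound $\frac{1}{2q_n}$, which does follow from your own inequalities since $\max L_i-\min L_i<\frac{m_i}{2}<\frac{25\cdot 3^i}{4}$; and you must also rule out cylinders coming from prefixes that differ at coordinates below $n$, not only those sharing $c_0,\ldots,c_{n-1}$. The clean way to do both is to note that every prefix value $\sum_{i\leq n}\frac{c_i}{q_i}$ with $c_i\in L_i-L_i$ is an even multiple of $\frac{1}{q_n}$ (all elements of each $L_i$ are even and $q_n/q_i$ is an integer), so $K-K$ misses an interval of length $\frac{1}{q_n}$ around every odd multiple of $\frac{1}{q_n}$, giving nowhere density. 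Alternatively, you can skip the empty-interior computation altogether and use the paper's one-line criterion (2); either way the corollary follows as you state.
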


\begin{proof}
It suffices to consider the set $X$ from the proof of Theorem \ref{not ideal}. It is not Haar-finite. However, it is a countable union of sets $X_n\in\h (2m_n+1)$, for $n\in\omega$, and the set $\left\{\phi((x_i)_i):\ \forall_{i\in\omega}\ x_i\in L_i\right\}$, which is Haar-$1$ by Proposition \ref{null-1}, as witnessed by the sequence $(\frac{1}{q_n})_n$ (recall that none $L_n$ contains two consecutive integers).
\end{proof}

Recall that $\sigma\left(\bigcup_{n\in\omega}\overline{\h n}\right)\subseteq\h\Ctbl$ (see Theorem \ref{sumyHn}). Thus, Question \ref{question1} would shed some light on the following.

\begin{question}
Is it true that a countable union of Haar-finite sets must be Haar-countable?
\end{question}

\section{Haar-countable sets}

In this section we construct a Haar-countable set which is not Haar-finite and a Haar-null and Haar-meager set which is not a countable union of closed Haar-countable sets. Also, we give a partial answer to a question posed in \cite{2} concerning countable unions of null-finite sets.

\begin{thm}
There is a compact Haar-countable subset of $\R$ which is not Haar-finite. Thus, $\overline{\h\Ctbl}\setminus\h\Fin\neq\emptyset$.
\end{thm}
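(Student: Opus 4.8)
The plan is to construct a compact set $K \subseteq \R$ which is a ``tower'' of closed Haar-$n$ pieces whose parameters $n$ tend to infinity, glued along a converging-to-zero scale so that the whole union sits inside a single Cantor-type structure. I would reuse the building blocks $C_l$ from Definition \ref{Cl}, or rather disjoint rescaled-and-translated copies of them placed in a compact arrangement (something like $K = \{0\} \cup \bigcup_{l \in \omega} (\widetilde{C_l} + t_l)$ where $\widetilde{C_l}$ is $C_l$ scaled down by a factor $\eps_l \to 0$ and $t_l \to 0$, so that $K$ is compact). The intuition is exactly the one recorded just before Question \ref{question1}: a countable union of closed Haar-$n$ sets need not be Haar-finite, and by Theorem \ref{sumyHn} such a union is automatically Haar-countable.

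First I would verify that each rescaled piece $\widetilde{C_l} + t_l$ is still closed and still Haar-$n_l$ for the appropriate $n_l$ (namely $2m_l+1$ by Lemma \ref{lem2}); this is immediate because Haar-$\I$ is invariant under topological group isomorphisms and hence, on $\R$, under affine maps (the first cited theorem of the preliminaries). Next I would check compactness of $K$: each $\widetilde{C_l}+t_l$ is compact, the diameters go to $0$, the translates $t_l$ converge to $0$, and adding the limit point $0$ makes $K$ closed and bounded, hence compact. Then the easy half — $K$ is Haar-countable — follows from the ``Moreover'' part of Theorem \ref{sumyHn}: $K$ is a countable union of members of $\bigcup_n \overline{\h n}$ (the pieces, plus the singleton $\{0\}$ which is Haar-$1$ by Proposition \ref{przeliczalne}), so $K \in \sigma(\bigcup_n \overline{\h n}) \subseteq \h\Ctbl$.

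The hard part will be showing $K \notin \h\Fin$. Here I would argue that for \emph{any} Cantor set $D \subseteq \R$ there is a translate $K - x$ meeting $D$ in an infinite set, equivalently, by Proposition (the $(b) \Leftrightarrow (c)$ characterization) there is $x$ with $(D - x) \cap K$ infinite. The mechanism should mirror Step 2 of the proof of Theorem \ref{not ideal}: given $D$, pick points $d_0 > d_1 > \cdots$ in $D$ converging to $\inf D$ with successive gaps shrinking fast relative to the scale parameters $\eps_l, q_k$; then run a Pigeonhole/diagonal construction, choosing digit by digit a point $r$ which lies in $K$ and simultaneously in $K - (d_i - d_0)$ for all $i$, using at each stage the fact that inside the $l$-th piece $\widetilde{C_l}+t_l$ the set $C_l$ leaves ``enough room'' (the $3^{n-l}+1$ versus $25\cdot 3^n - 3^{n-l}$ counting from Lemma \ref{lem1}) to avoid the finitely many forbidden translated blocks $Z^k_l$. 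The key point making this work across \emph{all} of $K$ at once is that the pieces are laid out so that a single converging sequence of translation increments can be ``absorbed'' level by level, and since the $C_l$'s become less and less Haar-small ($m_l \to \infty$), no finite bound on $|(D-x)\cap K|$ can hold. The main obstacle is the bookkeeping: one must choose the scales $\eps_l$, the positions $t_l$, and the gaps $d_i - d_0$ so that the Pigeonhole inequalities of Lemma \ref{lem1} remain valid at every level simultaneously and the constructed point $r$ genuinely lands in the compact set $K$ (in particular in the right piece), not merely in its ``digit-space'' closure; this is the same delicacy handled in Step 2 of Theorem \ref{not ideal} and in Lemma \ref{lem1}, and I expect the proof to invoke those arguments almost verbatim on each piece.
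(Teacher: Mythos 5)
Your easy half is fine: once the pieces are closed, Haar-countability of such a union does follow from the ``Moreover'' part of Theorem \ref{sumyHn} (the paper itself uses exactly this route for its set $X$ in the corollary following Question \ref{question1}, even though in the theorem's own proof it instead exhibits an explicit witness Cantor set $E$). The genuine gap is in the hard half. Non-Haar-finiteness is not a consequence of ``compact union of Haar-$n_l$ pieces with $n_l\to\infty$'': Lemma \ref{lem1} only gives, for each fixed piece, a \emph{finite} (level-dependent) lower bound on intersections with translates of an arbitrary Cantor set, and to get a single translate meeting the union in an \emph{infinite} set you must make one point $r$ hit pieces of unboundedly many levels simultaneously against an adversarial sequence of increments coming from an arbitrary $D$. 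The mechanism of Step 2 of Theorem \ref{not ideal} achieves this only because the level-$n$ pieces $X_n$ are \emph{nested inside the gaps of the previous level along an entire Cantor-like limit set} $\{\phi((x_i)_i):\forall_i\,x_i\in L_i\}$, whose branching $|L_n|=2^{n+1}$ grows fast enough that $r$ can be steered digit by digit (Pigeonhole against the $n+1$ forbidden sets $T^k_i$) no matter what the increments are. In your layout $K=\{0\}\cup\bigcup_l(\widetilde{C_l}+t_l)$ the pieces are pairwise separated and accumulate only at the single point $0$: there is no branching family of candidate points $r$, and whether $r+c_i$ can land in the tiny piece $\widetilde{C_l}+t_l$ becomes a rigid arithmetic condition on the adversary's increments relative to the fixed data $(t_l,\eps_l)$, which an adversarially chosen $D$ need not satisfy. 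So ``invoking Step 2 almost verbatim on each piece'' does not produce the required point.

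Worse, the separation you build in is precisely the feature the paper exploits in Steps 4 and 5 of Theorem \ref{not ideal} to prove that the sets $A$ and $B$ --- which are themselves compact unions of large chunks of \emph{every} level $X_n$, with $m_n\to\infty$ --- \emph{are} Haar-finite, via a single interleaved witness Cantor set (the sequence $(w_n)_n$ revisits every $m_l$ infinitely often, and the gaps separating the deep levels from the rest make each pairwise intersection of translates collapse to finitely many low levels). A set arranged like your $K$, with mutually separated pieces accumulating at one point, is exactly amenable to that interleaving argument, so it is quite plausibly Haar-finite; in any case you have given no argument ruling this out. To repair the proposal you would have to reproduce the essential combinatorial feature of $X$: pieces of level $n$ placed inside designated gaps of the level-$(n-1)$ structure at \emph{every} node of a branching scheme whose width outgrows the number of constraints --- at which point you are rebuilding the paper's construction rather than simplifying it.
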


\begin{proof}
Consider the set $X$ from the proof of Theorem \ref{not ideal}. We have already shown that it is not Haar-finite. Now we will show that the Cantor set $E$ constructed in the last step of the proof of Theorem \ref{not ideal} witnesses that $X$ is Haar-countable. 

Take any uncountable set $\{x^\alpha=(x^\alpha_i)_i:\ \alpha<\omega_1\}\subseteq 2^\omega$ and denote $e_\alpha=\sum_{n\geq 1}\bar{e}_{x^\alpha|n}\in E$, for each $\alpha<\omega_1$. Suppose to the contrary that $z\in\bigcap_{\alpha<\omega_1}(X-e_\alpha)$ and let $m\in\omega$ be minimal such that $x^\alpha_m=0$ for uncountably many $\alpha<\omega_1$ and $x^\alpha_m=1$ for uncountably many $\alpha<\omega_1$. 

Observe first that:
$$\left(\left\{\phi(x_i):\ \forall_{i\in\omega}\ x_i\in L_i\right\}-e_{\alpha_0}\right)\cap\left(\left\{\phi(x_i):\ \forall_{i\in\omega}\ x_i\in L_i\right\}-e_{\alpha_1}\right)=0$$
provided that $x^{\alpha_0}_m=0$ and $x^{\alpha_1}_m=1$ (by the fact that none $L_n$ contains two consecutive integers). Thus, either $z\in\left(\bigcup_{n\in\omega}X_n\right)-e_{\alpha}$ for all $\alpha$ with $x^\alpha_m=0$ or $z\in\left(\bigcup_{n\in\omega}X_n\right)-e_{\alpha}$ for all $\alpha$ with $x^\alpha_m=1$. Now, as there are uncountably many such $\alpha$, we conclude that there is $k$ with $z\in X_k-e_\alpha$ for uncountably many $\alpha$. However, by the definition of $E$, this is impossible, since $\bigcap_{i\leq 2m_k+1}(X_k-e_{\alpha_i})\subseteq \bigcap_{i\leq 2m_k+1}(C_k-e_{\alpha_i})=\emptyset$ for any pairwise distinct $\alpha_0,\ldots,\alpha_{2m_k+1}<\omega_1$. This finishes the proof.
\end{proof}

Now we will construct a null and meager compact set outside $\h\Ctbl$. Actually, the next result was shown by Elekes and Stepr\=ans in \cite[Theorem 1.2]{Elekes} with the use of a set constructed by Erd\H os and Kakutani in \cite{EK} and is an answer to \cite[Problem 3.1]{DK}, which in turn was motivated by Gruenhage (see also \cite{DM}, \cite{ElekesToth}, \cite{MS} and \cite[Proposition 3.11]{Zakrzewski}). However, we provide our own proof, which can be considered more natural and will be needed later in this section. Notice also that for the group $2^\omega$ a far more general fact has been proved by Zakrzewski in \cite[Proposition 3.12]{Zakrzewski}: in every invariant ccc $\sigma$-ideal on $2^\omega$ there is a compact set which is not Haar-countable. 

\begin{thm}
\label{null+meager}
There is a null and meager compact subset of $\R$ which is not Haar-countable. Thus, $\overline{\h\Null\cap\h\M}\setminus\h\Ctbl\neq\emptyset$.
\end{thm}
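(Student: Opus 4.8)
The plan is to construct a compact set $K\subseteq\R$ of the Erd\H{o}s--Kakutani type and verify three things: that it is Lebesgue null, that it is meager, and that no Cantor set $D\subseteq\R$ can witness $K\in\h\Ctbl$. The natural candidate is a set of the form $K=\{\sum_{n\in\omega}\frac{a_n}{N_n}:\ a_n\in A_n\}$, where $(N_n)$ is a very rapidly growing sequence of positive integers with $N_n\mid N_{n+1}$, and each $A_n\subseteq\{0,1,\dots,N_n/N_{n-1}-1\}$ is chosen so that $|A_n|\to\infty$ but the ratio $|A_n|\cdot N_{n-1}/N_n\to 0$ (this forces $K$ to be null), while the gaps in $A_n$ are chosen large enough that $K$ is nowhere dense (hence meager, being also closed). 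So the first step is to pin down the growth rates and digit sets and record, via a direct computation with the product measure/geometric-series bound, that $K$ is compact, null and meager.

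The heart of the proof is showing $K\notin\h\Ctbl$, i.e. that for every Cantor set $D\subseteq\R$ there is some $x\in\R$ with $(D-x)\cap K$ uncountable. By Proposition 0.5 (item (c)) it suffices to produce, for an arbitrary Cantor set $D$, an uncountable set $S\subseteq D$ with $\bigcap_{s\in S}(K-s)\neq\emptyset$. The standard strategy (going back to Erd\H{o}s--Kakutani and exploited by Elekes--Stepr\=ans) is: since $D$ is uncountable compact, by shrinking I may assume $\mathrm{diam}(D)$ is as small as I like, in particular smaller than $1/N_0$; then inside $D$ I build, by a Cantor-scheme fusion argument, a ``coherent'' subtree so that all the branches, after subtracting a common anchor point, land in $K$. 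Concretely, at level $n$ I will have arranged that any two points $s,s'$ of the relevant piece of $D$ agree in their base-$(N_n)$ expansion up to coordinate $n$ and differ by something so small that adding it to an appropriately chosen element of $\prod_{k\le n}A_k$ (digits) still keeps each partial sum inside the right cylinder of $K$; the rapid growth of $N_n$ relative to the mesh of $D$ at that level is what makes the bookkeeping close. The anchor $x$ is then the common value $\lim$ of the partial corrections, and $S$ is the full uncountable set of branch points; one checks $s+ (\text{something in }K) = x$ fails the wrong way — rather, $s - x \in K$ for every $s\in S$, giving $\bigcap_{s\in S}(K-s)\ni -x$... more precisely $\bigcap_{s\in S}(K+(-x-s))\ni 0$, i.e. $\bigcap_{s\in S}(K-(x+s))\neq\emptyset$ after relabeling the translate; I will just phrase it directly as: there is $c\in K$ and $x\in\R$ with $s - x\in K$... the cleanest formulation is to exhibit $x$ with $D_0 - x\subseteq K$ for an uncountable $D_0\subseteq D$.

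The main obstacle I anticipate is the fusion bookkeeping in that last step: one must interleave the (uncontrolled) splitting structure of the arbitrary Cantor set $D$ with the (rigid) digit-block structure of $K$, choosing at stage $n$ both which $N$-adic level to ``freeze'' the points of $D$ at and which digits of $K$ to commit to, so that the diameters shrink fast enough and the corrections converge. The key lemma to isolate is something like: if $E\subseteq\R$ is an uncountable compact set with $\mathrm{diam}(E)<\delta$, then for a suitable huge $N$ (depending on $\delta$ and on how $E$ splits) and a suitable choice of a single digit $a\in A$ at the current block, the set $\{e\in E:\ e \text{ lies in a prescribed $N$-adic cylinder adjusted by }a/N\}$ is still uncountable compact with much smaller diameter. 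Iterating this and using $N_n/N_{n-1}\to\infty$ together with $|A_n|\to\infty$ provides enough room at every stage. Granting that lemma, the union of the final branches is the desired uncountable $S\subseteq D$, the limit of the accumulated digit-sums is the desired $c\in K$, and the accumulated ``offsets'' assemble into the single $x$ with $s-x = c_s\in K$ for all $s\in S$ — contradicting that $D$ witnesses Haar-countability of $K$. Finally, combining the three properties, $K\in\overline{\h\Null\cap\h\M}\setminus\h\Ctbl$, which on $\R$ (where $\h\Null=$ null and $\h\M=$ meager) is exactly the claimed statement.
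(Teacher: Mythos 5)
Your overall strategy coincides with the paper's: an Erd\H{o}s--Kakutani-type digit set, and, for an arbitrary Cantor set $D$, a fusion argument extracting an uncountable $S\subseteq D$ together with a single anchor point, built digit by digit, witnessing $\bigcap_{s\in S}(K-s)\neq\emptyset$. However, there are genuine gaps. First, your parameters for $K$ are the wrong ones. You require the digit alphabets $A_n$ to have density tending to zero ($|A_n|N_{n-1}/N_n\to 0$) with large gaps, but the heart of the argument is that at each digit position one must find a \emph{single} anchor digit $x_n$ working simultaneously for all branches of $S$ visible at that scale, i.e.\ $x_n\in\bigcap_j(A_n-b_j)$, where the offsets $b_j$ are dictated by the adversarial set $D$, not by you. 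Such an intersection is guaranteed to be nonempty only when the \emph{complement} of $A_n$ is small (boundedly many values per position) relative to the number of visible branches; for a sparse, gappy $A_n$ it can be empty already for two offsets, and nothing prevents $D$ from having all its relevant differences outside $A_n-A_n$ -- indeed the adversary has exactly the kind of freedom the paper itself uses to build witnesses (cf.\ the proofs of Theorem \ref{one-third} and Lemma \ref{lem2}, where forbidding even a fixed proportion of digit values per position already makes the set Haar-$n$), so your $K$ might fail to be a counterexample at all. The paper's set instead forbids a \emph{single} digit value per position out of alphabets of size $2n+3$, and obtains nullness by repeating each alphabet $k_{n+1}-k_n$ many times, not by thinning the alphabets.

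Second, the quantifiers in your key lemma are misplaced: you choose ``a suitable huge $N$ depending on $\delta$ and on how $E$ splits'', but the scales of $K$ are fixed before $D$ is given. The correct move (and the paper's) is the converse: delay the splitting of the fusion tree inside $D$ until the block index of $K$ is large enough, i.e.\ select a subtree of $D$ whose increments at tree level $n$ are below $\tfrac{1}{2}q_{k_{2^{n+1}+1}}^{-1}$, so that at a position in block $m$ at most $2^{n-1}<m/2$ distinct truncations of elements of $S$ are visible, while $m+1$ digit values are available and each truncation (allowing for a carry) excludes at most two of them. This pigeonhole, together with the carry control obtained by keeping both endpoints of the interval $\left[\sum_{j\le i}x_j/q_j,\ \sum_{j\le i}x_j/q_j+1/q_i\right]$ off the translated bad sets, is the technical core that your sketch leaves unproved (``provides enough room at every stage''). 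Relatedly, your lemma as stated shrinks to a single adjusted cylinder per stage; to end with an uncountable $S$ you must keep an entire Cantor scheme of branches alive and handle all of them simultaneously, which is precisely what this counting is for.
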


\begin{proof}
Fix an increasing sequence $(k_n)_n$ of integers with $k_0=0$. Let $q_0=3$ and $q_i=(2n+3)q_{i-1}$ whenever $k_n\leq i<k_{n+1}$. Define $f\colon\omega^\omega\to\mathbb{R}\cup\{\infty\}$ by $f((x_i)_i)=\sum_{i\in\omega}\frac{x_i}{q_i}$. Denote by $X$ the set consisting of all reals of the form $f((x_i)_i)$, where $x_i\in(2n+3)\setminus\{n+1\}$ whenever $k_n\leq i<k_{n+1}$. Obviously, $X$ is compact and nowhere dense. Moreover, if $(k_n)_n$increases sufficiently fast, it is also null.

In order to show that $X$ is not Haar-countable, fix any Cantor set $D$. Without loss of generality we may assume that $\inf D=0$ and $\sup D\leq \frac{2}{3}$ (by translating $D$ and intersecting it with the interval $[0,\frac{2}{3})$). Let $\{d_s:\ s\in 2^{<\omega}\}\subseteq\mathbb{R}_+$ be such that 
$$D=\left\{\sum_{n\in\omega}d_{\alpha|n}:\ \alpha\in 2^\omega\right\}$$
(i.e., if we represent $D$ as an intersection of finite sums of closed intervals, then $d_{s\frown(1)}$, for $s\in 2^{<\omega}$, are the distances between left ends of two intervals from one level lying below the same interval from the previous level). We can additionally assume that $d_s=0$ for all $s\notin S=\{s^\frown (1):\ s\in 2^{<\omega}\}$ (i.e., $S$ is the set of all finite $0-1$ sequences with $1$ at the end) and that $d_{s^\frown(1)}>\sum_{i\in\omega} d_{s^\frown(0)^\frown \bar{1}_i}$, for each $s\in 2^{<\omega}$, where $\bar{1}_i=(1,\ldots,1)\in 2^i$. 

We inductively pick $t_s\in S$, for all $s\in S$. Let $t_{(1)}$ be of the form $(0,\ldots,0,1)$ and such that $d_{t_{(1)}}<\frac{1}{2q_{k_3}}$. Suppose now that $t_s$, for all $s\in S\cap 2^{<n}$, are already defined. For each $s\in 2^{n-1}$ let $t_{s^\frown (1)}$ be such that:
\begin{itemize}
	\item[(a)] $d_{t_{s^\frown (1)}}<\frac{1}{2q_{k_{2^{n+1}+1}}}$;
	\item[(b)] if $s=(0,\ldots,0)$, then $t_{s^\frown (1)}$ is of the form $(0,\ldots,0,1)$;
	\item[(c)] if $s'\in S$ is the longest sequence such that $s'\subseteq s$, then $t_{s^\frown (1)}$ is a concatenation of $t_{s'}$ and some sequence of the form $(0,\ldots,0,1)$.
\end{itemize}

Let also $t_s=\emptyset$ for $s\in 2^\omega\setminus S$ (i.e., $d_{t_{s}}=0$ for $s\in 2^\omega\setminus S$) and define:
$$E=\left\{\sum_{n\in\omega}d_{t_{\alpha|n}}:\ \alpha\in 2^\omega\right\}.$$
Obviously, $E$ is uncountable. By conditions (b) and (c) we also have $E\subseteq D$. Indeed, given any $\alpha\in 2^\omega$ define $L=\{n\in\omega:\ \alpha|n\in S\}$ and let $(l_i)_i$ be an increasing enumeration of $L$. Then for $\alpha'=\bigcup_{i\in\omega}t_{\alpha|l_i}$ (i.e., $\alpha'|\text{lh}(t_{\alpha|l_i})=t_{\alpha|l_i}$ for each $i$) we get $\sum_{n\in\omega}d_{t_{\alpha|n}}=\sum_{n\in\omega}d_{\alpha'|n}\in D$.

It remains to show that $\bigcap_{e\in E}(X-e)\neq\emptyset$. Let $e_\alpha=\sum_{n\in\omega}d_{t_{\alpha|n}}$ and $(e^\alpha_i)_i$ be such that $e^\alpha_i\in 2n+3$, for $k_n\leq i<k_{n+1}$, and $e_\alpha=f(e^\alpha_i)$, for $\alpha\in 2^\omega$.

We will inductively construct a sequence $(x_i)_i\in\omega^\omega$. We start with $x_0=0$. Suppose now that $x_j$, for all $j<i$, are already defined. Let $m$ be such that $k_m\leq i<k_{m+1}$ and define $Z_i$ as the set consisting of all reals of the form $f((y_j)_j)$, where $y_j\in 2n+3$ whenever $k_n\leq j<k_{n+1}$, $y_j\neq 2n+2$ for infinitely many $j\in\omega$, and $y_i=m+1$. Pick $x_i\in(2m+3)\setminus(m+2)$ in such a way that neither $\sum_{j\leq i} \frac{x_j}{q_j}$ nor $\sum_{j\leq i} \frac{x_j}{q_j}+\frac{1}{q_i}$ belongs to the set
$$\bigcup_{\alpha\in 2^\omega}\left(Z_i-\sum_{j\leq i} \frac{e^\alpha_j}{q_j}\right).$$
This is possible by the Pigeonhole Principle. Indeed, observe first that $|(2m+3)\setminus(m+2)|=m+1$ and each set of the form $Z_i-\sum_{j\leq i} \frac{e^\alpha_j}{q_j}$ excludes at most two values of $x_i$. We will show that there are at most $\frac{m}{2}$ pairwise distinct sets of the form $Z_i-\sum_{j\leq i} \frac{e^\alpha_j}{q_j}$. Let $n$ be maximal such that $2^{n}<m$ (so $2^{n+1}\geq m$). Since $k_m\leq i<k_{m+1}\leq k_{2^{n+1}+1}$, by condition (a) we have $d_{t_{s^\frown(1)}}<\frac{1}{2q_{k_{2^{n+1}+1}}}<\frac{1}{2q_i}$, for all $s\in 2^{n-1}$. Hence, $d_{t_{\alpha|(n+1)}}<\frac{1}{2q_i}$ for all $\alpha$. Moreover, $d_{t_{\alpha|(n+j)}}<\frac{1}{2^j q_i}$, for all $\alpha$ and $j\geq 1$, as $q_{j+1}>2q_j$ and $d_{t_{\alpha|(n+j)}}<\frac{1}{2q_{k_{2^{n+j}+1}}}$ by condition (a). It follows that $\sum_{j\geq 1} d_{t_{\alpha|n+j}}<\frac{1}{2q_i}\left(1+\frac{1}{2}+\frac{1}{4}+\ldots\right)=\frac{1}{q_i}$, for all $\alpha$, and $\sum_{j\leq i} \frac{e^\alpha_j}{q_j}=\sum_{j\leq i} \frac{e^\beta_j}{q_j}$ whenever $\alpha|n=\beta|n$. Thus,
$$\left\{\sum_{j\leq i} \frac{e^\alpha_j}{q_j}:\ \alpha\in 2^\omega\right\}=\left\{\sum_{j\leq i} \frac{e^\alpha_j}{q_j}:\ \alpha=s^\frown(0,0,\ldots)\mbox{ for some }s\in 2^{n-1}\right\}.$$
Now it suffices to observe that the latter set has cardinality at most $2^{n-1}<\frac{m}{2}$.

To finish the proof, notice that $x=f((x_i)_i)$ belongs to $\bigcap_{\alpha\in 2^\omega}(X-e_\alpha)$. Indeed, fix any $\alpha$ and suppose to the contrary that $x\notin X-e_\alpha$. Then $x\in\bigcup_{i\in\omega}(Z_i-e_\alpha)$, since $[0,1]\setminus X\subseteq\bigcup_{i\in\omega}Z_i$ and $x+e_\alpha\leq x+\sup D\leq\frac{1}{3}+\frac{2}{3}=1$ (recall that $x_0=0$). Thus, there is $i_0$ such that $x\in Z_{i_0}-e_\alpha$. Observe that 
$$x\in \left[\sum_{j\leq i_0} \frac{x_j}{q_j},\sum_{j\leq i_0} \frac{x_j}{q_j}+\frac{1}{q_{i_0}}\right].$$ 
Denote the above interval by $I$. By the choice of $x_{i_0}$, we know that endpoints of $I$ do not belong to the set $Z_{i_0}-\sum_{j\leq i_0} \frac{e^\alpha_j}{q_j}$, which is a union of intervals of the form $[\frac{p}{q_{i_0}},\frac{p+1}{q_{i_0}})$, for some integer $p$. Thus, the distance between $\max I$ and $Z_{i_0}-\sum_{j\leq i_0} \frac{e^\alpha_j}{q_j}$ is at least $\frac{1}{q_{i_0}}$. Now it suffices to observe that $\sum_{j>i_0} \frac{e^\alpha_j}{q_j}<\frac{1}{q_{i_0}}$. Hence, $I$ is disjoint with $Z_{i_0}-\sum_{j\in\omega} \frac{e^\alpha_j}{q_j}$, a contradiction.
\end{proof}

Trivially, a countable union of Haar-countable sets is Haar-null and Haar-meager. Now we show that the opposite inclusion fails at least in the case of compact Haar-countable sets.

\begin{cor}
There is a compact null and meager subset of $\R$ which is not a countable union of closed Haar-countable sets. Thus, $\overline{\h\Null}\cap\overline{\h\M}\setminus\sigma\overline{\h\Ctbl}\neq\emptyset$.
\end{cor}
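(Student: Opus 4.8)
The plan is to refine the set $X$ from Theorem \ref{null+meager} so that every portion of it witnessed inside a basic clopen piece remains non-Haar-countable, and then invoke Remark \ref{sigma}. Concretely, I would revisit the construction of $X$ in the proof of Theorem \ref{null+meager}: there $X$ consists of all $f((x_i)_i)$ with $x_i\in(2n+3)\setminus\{n+1\}$ for $k_n\le i<k_{n+1}$. The key point is that the argument showing $\bigcap_{e\in E}(X-e)\ne\emptyset$ only used the \emph{tails} of the expansions (the inductive construction of $(x_i)_i$ starts at an arbitrary point once finitely many digits are fixed). Hence, for any finite sequence $s$ with $s_i\in(2n+3)\setminus\{n+1\}$ on the appropriate blocks, the same reasoning applied with $x_i=s_i$ for $i<\mathrm{lh}(s)$ shows that $X\cap I_s$ is not Haar-countable, where $I_s$ is the corresponding basic clopen neighbourhood. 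Since the sets $I_s$ with $X\cap I_s\ne\emptyset$ form a basis of neighbourhoods of points of $X$, every open interval $I$ with $X\cap I\ne\emptyset$ contains some such $I_s$, so $X\cap I\notin\h\Ctbl$.

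First I would state precisely that the $X$ built in Theorem \ref{null+meager} is compact, null and meager (this is already in that proof). Second, I would observe that the localized statement ``$X\cap I_s$ is not Haar-countable'' follows verbatim from the proof of Theorem \ref{null+meager}: given a Cantor set $D$, translate and scale it to fit into the gap of $I_s$ (exactly as $D$ was pushed into $[0,\tfrac23)$ in the original proof, now into the relevant subinterval of length comparable to $q_{\mathrm{lh}(s)}^{-1}$), then run the identical induction producing $(x_i)_i$ with $x_i=s_i$ for $i<\mathrm{lh}(s)$ and $x=f((x_i)_i)\in\bigcap_{e\in E}(X\cap I_s-e)$ for an uncountable $E\subseteq D$. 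The Pigeonhole counting is unchanged because it only involves the blocks with index $\ge\mathrm{lh}(s)$.

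Third, with the localization in hand, the corollary is immediate from Remark \ref{sigma}: a closed set $A\subseteq\R$ with $A\cap I\notin\h\Ctbl$ for every open interval $I$ meeting $A$ cannot be a countable union of closed Haar-countable sets, by the Baire category theorem. Applying this to $A=X$ gives $X\notin\sigma\overline{\h\Ctbl}$, and since $X$ is null and meager we get $X\in\left(\overline{\h\Null}\cap\overline{\h\M}\right)\setminus\sigma\overline{\h\Ctbl}$, as required. (Using $\h\Null\cap\h\M=\overline{\h\Null}\cap\overline{\h\M}$ on $\R$ only for closed sets, which $X$ is.)

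The main obstacle is purely bookkeeping: one must check that the choice of the sequence $(k_n)_n$ and the quantities $q_i$ can be made \emph{uniformly} so that for \emph{every} admissible finite stem $s$ there is genuinely room to embed a scaled copy of an arbitrary Cantor set into the corresponding gap of $X$, while simultaneously keeping the Pigeonhole inequality ``$2^{n-1}<\tfrac m2$'' valid — i.e. the localization must not consume the slack that made the original counting work. I expect this to go through because increasing $(k_n)_n$ fast only helps both the nullity of $X$ and the counting, and the stem $s$ merely shifts the starting index of the induction without affecting the asymptotics; nonetheless, verifying this carefully is the one place where the argument is not entirely automatic.
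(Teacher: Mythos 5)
Your proposal is correct and follows essentially the same route as the paper: localize the non-Haar-countability argument of Theorem \ref{null+meager} to $X\cap I$ by fixing a finite admissible stem $s$ and running the identical induction from index $\mathrm{lh}(s)$ onward, then conclude via Remark \ref{sigma}; the uniformity worry in your last paragraph is indeed harmless, since the counting only involves blocks beyond the stem. The one caveat is that the reduction on the witness Cantor set $D$ must be done by translating and passing to an uncountable relatively clopen piece of small diameter (which is what the original proof does), \emph{not} by rescaling $D$ --- a rescaled copy is a different Cantor set, and establishing the intersection property for it would say nothing about $D$ itself.
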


\begin{proof}
Consider the set $X$ from the previous proof. We already know that it is null and meager. 

Fix an open interval $I$ with $X\cap I\neq\emptyset$. We will show that $X\cap I\notin\h\Ctbl$. It will follow that $X\notin\sigma\overline{\h\Ctbl}$ (see Remark \ref{sigma}). 

There is $s\in\omega^{<\omega}$ with $s_i\in(2n+3)\setminus \{x+1\}$, for all $i<\text{lh}(s)$, $k_n\leq i<k_{n+1}$, such that $f(s^\frown(0,0,\ldots))\in X\cap I$ (where $(k_n)_n$ and $f$ are such as in the proof of Theorem \ref{null+meager}). Then, for a given Cantor set $D$, without loss of generality we may assume that $\inf D=0$ and $\sup D<\sup(X\cap I)-f(s^\frown(1,0,0,\ldots))$. Let $E$ and $e_\alpha$, for $\alpha\in 2^\omega$, and $Z_i$, for $i\in\omega$, be as in the previous proof. 

In order to show that $\bigcap_{\alpha\in 2^\omega}((X\cap I)-e_\alpha)\neq\emptyset$, construct a sequence $(x_i)_i\in\omega^\omega$ such that $x_i=s_i$ for $i<\text{lh}(s)$, $x_{\text{lh}(s)}=0$ and $x_i$ for $i>\text{lh}(s)$ are defined similarly as in the proof of Theorem \ref{null+meager}. Then $x=f((x_i)_i)$ belongs to $\bigcap_{\alpha\in 2^\omega}((X\cap I)-e_\alpha)$, since otherwise we could find $\alpha\in 2^\omega$ such that $x\in\bigcup_{i\in\omega}(Z_i-e_\alpha)$ (since  
$$\inf(X\cap I)\leq f(s^\frown(0,0,\ldots))\leq x+e_\alpha\leq f(s^\frown(1,0,0,\ldots))+\sup D<\sup(X\cap I)$$
and $[\inf(X\cap I),\sup(X\cap I)]\setminus X\subseteq\bigcup_{i\in\omega}Z_i$) and by proceeding in the same way as in the proof of Theorem \ref{null+meager} we would get a contradiction.
\end{proof}

The next corollary gives a partial answer to the following problem posed in \cite{2}: is $\h\Null\cap\h\M$ equal to the $\sigma$-ideal generated by null-finite sets?

\begin{cor}
There is a compact null and meager subset of $\R$ which is not a countable union of closed null-finite sets.
\end{cor}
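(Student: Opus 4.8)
The plan is to show that the very set $X$ constructed in the proof of Theorem~\ref{null+meager} works. We already know it is compact, null and meager, so it remains to prove that $X$ is not a countable union of closed null-finite sets. Since null-finite sets form a semi-ideal (a subset of a null-finite set is null-finite, with the same witnessing sequence), the Baire category argument of Remark~\ref{sigma} applies verbatim as in the preceding corollary, and the whole matter reduces to: $X\cap I$ is not null-finite for every open interval $I$ with $X\cap I\neq\emptyset$. Recall from the proof of Theorem~\ref{null+meager} the integers $(k_n)_n$, the scales $(q_i)_i$, the expansion map $f$, and the sets $Z_i$ satisfying $[0,1]\setminus X\subseteq\bigcup_i Z_i$; note moreover that the admissible digit set $\{0,\dots,2n+2\}\setminus\{n+1\}$ in block $n$ is invariant under $t\mapsto(2n+2)-t$, so that $X=1-X$ and $X\subseteq[0,1]$.

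So fix such an $I$ and let $(x_n)_n\subseteq\R$ be an arbitrary convergent sequence; I must produce $x$ with $\{n:\ x_n-x\in X\cap I\}$ infinite. If $(x_n)_n$ takes only finitely many values, then, $X\cap I$ being nonempty, it fails to witness null-finiteness outright: pick any $z\in X\cap I$ and put $x=c-z$, where $c$ is a value attained infinitely often. Otherwise $(x_n)_n$ has a strictly monotone subsequence $(x_{n_l})_l$ converging to $x_\infty=\lim_n x_n$, and by the symmetry $X=1-X$ it suffices to treat the decreasing case: if $x_{n_l}\downarrow x_\infty$ and one can find $p\in X\cap I$ and an infinite $L\subseteq\omega$ with $p+(x_{n_l}-x_\infty)\in X\cap I$ for all $l\in L$, then $x:=x_\infty-p$ works; in the increasing case one applies the decreasing case to the interval $1-I$ and the sequence $\delta_l:=x_\infty-x_{n_l}\downarrow 0$, obtains such a $p'\in X\cap(1-I)$, and sets $p=1-p'\in X\cap I$ and $x=x_\infty-p$, so that $x_{n_l}-x=1-(p'+\delta_l)\in 1-(X\cap(1-I))=X\cap I$ for $l\in L$.

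Thus everything comes down to the following one-sequence (hence easier) version of the construction in Theorem~\ref{null+meager}: given an open interval $J$ with $X\cap J\neq\emptyset$ and reals $\delta_0>\delta_1>\dots\to 0$, there are $p\in X\cap J$ and an infinite $L$ with $p+\delta_l\in X\cap J$ for all $l\in L$. To prove it, first choose an admissible prefix $s$ of some length $N\geq k_2$ whose cell is contained in $J$ and stays below $1$, set the $N$-th digit of $p$ equal to $0$, and — after shrinking $J$ slightly and passing to a subsequence of $(\delta_l)_l$ with, say, $\delta_l<q_{k_{l+N}}^{-1}$ — construct the remaining digits of $p$ inductively: at stage $i>N$ there are at most $1+(b(i)-N+1)$ distinct sets of the form $Z_i-\sum_{j\le i}\delta^{\,l}_j/q_j$, where $\delta^{\,l}$ is the $q$-expansion of $\delta_l$ and $b(i)$ denotes the block index of $i$, each forbidding at most two admissible values of the $i$-th digit; since $2\bigl(1+(b(i)-N+1)\bigr)<2b(i)+2$ (because $N>1$), the Pigeonhole Principle lets us pick the $i$-th digit so that $\sum_{j\le i}p_j/q_j$ and $\sum_{j\le i}p_j/q_j+q_i^{-1}$ both avoid every such set. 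The verification that $p=f((p_i)_i)$ lies in $\bigcap_l(X-\delta_l)$ is then word for word as in Theorem~\ref{null+meager}: if $p+\delta_l\in Z_{i_0}$ for some $i_0$, the prefix forces $i_0>N$, the length-$q_{i_0}^{-1}$ cell containing $p$ is disjoint from $Z_{i_0}-\sum_{j\le i_0}\delta^{\,l}_j/q_j$, while $\sum_{j>i_0}\delta^{\,l}_j/q_j<q_{i_0}^{-1}$, a contradiction. Taking $L=\{l:\ \delta_l\ \text{is small enough that}\ p+\delta_l\in J\}$, which is cofinite, completes the claim and hence the proof. The main difficulty is this last paragraph: arranging the prefix $s$ and the subsequence of $(\delta_l)_l$ so that the Pigeonhole margin survives at every stage while the cell, interval, and "$\le 1$" constraints all hold simultaneously; but all of this is a routine reworking of the estimates already established for Theorem~\ref{null+meager} and the preceding corollary. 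Since only closed null-finite sets are involved, this gives just a partial answer to the question of \cite{2}.
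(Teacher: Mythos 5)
Your proposal is correct and, at its core, follows the same route as the paper: you use the very set $X$ from Theorem~\ref{null+meager}, reduce via Remark~\ref{sigma} to showing that $X\cap I$ is not null-finite, and run the same digit-by-digit Pigeonhole construction avoiding the translated gap sets $Z_i$ when the shifts decrease to $0$. The genuine deviation is your handling of the increasing-subsequence case: the paper repeats the whole construction with mirrored gap sets $Z'_i$ (intervals half-open on the other side, translates added rather than subtracted), whereas you invoke the symmetry $X=1-X$ and apply the decreasing case to the interval $1-I$. That symmetry is legitimate -- the admissible digit set $\{0,\dots,2n+2\}\setminus\{n+1\}$ in block $n$ is invariant under $t\mapsto 2n+2-t$, and the all-maximal expansion telescopes to exactly $1$ -- so your reduction buys a shorter proof that avoids duplicating the second half of the paper's argument, at the price of using a special feature of this particular $X$. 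Two details are worth writing out in full: (i) your Pigeonhole count works with all $2m+2$ admissible digits (the paper restricts to the upper half $\{m+2,\dots,2m+2\}$), so you should justify that each translated $Z_i$ excludes exactly two residues modulo $2m+3$, hence at most two admissible digit values, which is what makes $2\bigl(b(i)-N+2\bigr)<2b(i)+2$ sufficient for $N\geq 2$; (ii) the remark that the admissible prefix together with the digit $0$ at position $N$ and $\delta_l<q_{k_N}^{-1}$ forces $i_0>N$ in the final verification is needed and correct -- in fact you are more explicit about this point than the paper is.
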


\begin{proof}
Let $X$ be as in the proof of Theorem \ref{null+meager}. We will show that $X\cap I$ is not null-finite, where $I$ is a fixed open interval with $X\cap I\neq\emptyset$. It will follow that $X$ is not a countable union of closed null-finite sets (see Remark \ref{sigma}). 

We start similarly as in the proof of the previous corollary: there is $s\in\omega^{<\omega}$ with $s_i\in(2n+3)\setminus \{x+1\}$, for all $i<\text{lh}(s)$, $k_n\leq i<k_{n+1}$, such that $f(s^\frown(0,0,\ldots))\in X\cap I$ (where $(k_n)_n$ and $f$ are such as in the proof of Theorem \ref{null+meager}). We may additionally assume that $f(s^\frown(0,0,\ldots))>\inf(X\cap I)$ and $f(s^\frown(1,0,0,\ldots))<\sup(X\cap I)$.

Fix any convergent sequence $(x_n)_n$. We may assume that it converges to $0$. 

Suppose first that $(x_n)_n$ contains an infinite decreasing subsequence. In this case it suffices to consider a decreasing subsequence $(x_{k_n})_n\subseteq(x_n)_n$ such that $0< x_{k_n}<\sup(X\cap I)-f(s^\frown(1,0,0,\ldots))$, for all $n$, and: 
\begin{equation}
\label{ee}\left|\left\{n\in\omega:\ x_{k_n}\geq \frac{1}{q_i}\right\}\right|<\frac{m}{2}-1,
\end{equation}
where $k_m\leq i<k_{m+1}$. 

Inductively construct a sequence $(x_i)_i\in\omega^\omega$ starting with $x_i=s_i$, for $i<\text{lh}(s)$, and $x_{\text{lh}(s)}=0$. If $i>\text{lh}(s)$ and $x_j$, for all $j<i$, are already defined, let $m$ be such that $k_m\leq i<k_{m+1}$ and define $Z_i$ as in the proof of Theorem \ref{null+meager}. Pick $x_i\in(2m+3)\setminus(m+2)$ in such a way that neither $\sum_{j\leq i} \frac{x_j}{q_j}$ nor $\sum_{j\leq i} \frac{x_j}{q_j}+\frac{1}{q_i}$ belongs to the set
$$\bigcup_{\alpha\in 2^\omega}\left(Z_i-\sum_{j\leq i} \frac{y^n_j}{q_j}\right),$$
where $(y^n_j)_j$ are such that $y^n_j\in 2l+3$, for $k_l\leq j<k_{l+1}$, and $x_{k_n}=f((y^n_j)_j)$. This is possible by the Pigeonhole Principle, since $|(2m+3)\setminus(m+2)|=m+1$, each set of the form $Z_i-\sum_{j\leq i} \frac{y^n_j}{q_j}$ excludes at most two values of $x_i$ and the set $\left\{\sum_{j\leq i} \frac{y^n_j}{q_j}:\ n\in\omega\right\}$ has cardinality at most $\frac{m}{2}$ (by (\ref{ee}) there are less than $\frac{m}{2}-1$ sequences $(y^n_j)_j$ with $y^n_j|i+1\neq \bar{0}_{i+1}$).

We will show that $x=f((x_i)_i)$ belongs to $\bigcap_{n\in\omega}((X\cap I)-x_{k_n})$. Indeed, otherwise we could find $n\in\omega$ such that $x\notin(X\cap I)-x_{k_n}$. Since $[\inf(X\cap I),\sup(X\cap I)]\setminus X\subseteq\bigcup_{i\in\omega}Z_i$ and 
$$\inf(X\cap I)<f(s^\frown(0,0,\ldots))\leq x+x_{k_n}\leq f(s^\frown(1,0,0,\ldots))+x_{k_n}<\sup(X\cap I),$$
we get that $x\in\bigcup_{i\in\omega}(Z_i-x_{k_n})$. By proceeding in the same way as in the proof of Theorem \ref{null+meager}, we get a contradiction.

If $(x_n)_n$ does not contain an infinite decreasing subsequence, pick an increasing subsequence $(x_{k_n})_n\subseteq(x_n)_n$ such that $0> x_{k_n}>\inf(X\cap I)-f(s^\frown(0,0,\ldots))$, for all $n$, and: 
$$\left|\left\{n\in\omega:\ x_{k_n}\leq -\frac{1}{q_i}\right\}\right|<\frac{m}{2}-1,$$
where $k_m\leq i<k_{m+1}$. 

For each $i\in\omega$ define $Z'_i$ as the set consisting of all reals of the form $f((z_j)_j)$, where $z_j\in 2n+3$ whenever $k_n\leq j<k_{n+1}$, $z_j\neq 0$ for infinitely many $j\in\omega$, and $z_i=m+1$. Similarly as in the previous case, inductively construct a sequence $(x_i)_i\in\omega^\omega$ such that $x_i=s_i$, for $i<\text{lh}(s)$, $x_{\text{lh}(s)}=0$ and $x_i\in(2m+3)\setminus(m+2)$, for $i>\text{lh}(s)$ and $k_m\leq i<k_{m+1}$, is such that neither $\sum_{j\leq i} \frac{x_j}{q_j}$ nor $\sum_{j\leq i} \frac{x_j}{q_j}+\frac{1}{q_i}$ belongs to the set
$$\bigcup_{\alpha\in 2^\omega}\left(Z'_i+\sum_{j\leq i} \frac{y^n_j}{q_j}\right),$$
where $(y^n_j)_j$ are such that $y^n_j\in 2l+3$, for $k_l\leq j<k_{l+1}$, and $-x_{k_n}=f((y^n_j)_j)$. This is possible for the same reason as in the previous case (as each set $Z'_i-\sum_{j\leq i} \frac{y^n_j}{q_j}$ excludes at most two values of $x_i$).

In order to show that $x=f((x_i)_i)$ belongs to $\bigcap_{n\in\omega}((X\cap I)-x_{k_n})$, observe that $[\inf(X\cap I),\sup(X\cap I)]\setminus X\subseteq\bigcup_{i\in\omega}Z'_i$ and 
$$\inf(X\cap I)\leq f(s^\frown(0,0,\ldots))+x_{k_n}\leq x+x_{k_n}\leq f(s^\frown(1,0,0,\ldots))<\sup(X\cap I).$$
Thus, if $x\notin(X\cap I)-x_{k_n}$, for some $n\in\omega$, then $x\in\bigcup_{i\in\omega}(Z'_i-x_{k_n})$, which leads to a contradiction. Indeed, let $i_0$ be such that $x\in Z'_{i_0}-x_{k_n}$. Observe that 
$$x\in \left[\sum_{j\leq i_0} \frac{x_j}{q_j},\sum_{j\leq i_0} \frac{x_j}{q_j}+\frac{1}{q_{i_0}}\right].$$ 
Denote the above interval by $J$. By the choice of $x_{i_0}$, we know that endpoints of $J$ do not belong to the set $Z'_{i_0}+\sum_{j\leq i_0} \frac{y^n_j}{q_j}$, which is a union of intervals of the form $(\frac{p}{q_{i_0}},\frac{p+1}{q_{i_0}}]$, for some integer $p$. Thus, the distance between $\max J$ and $Z'_{i_0}+\sum_{j\leq i_0} \frac{y^n_j}{q_j}$ is at least $\frac{1}{q_{i_0}}$. Now it suffices to observe that $\sum_{j>i_0} \frac{y^n_j}{q_j}<\frac{1}{q_{i_0}}$. Hence, $J$ is disjoint with $Z_{i_0}+\sum_{j\in\omega} \frac{y^n_j}{q_j}$, a contradiction.
\end{proof}

We end with an open question. We believe that it requires developing new methods (besides the ones used in this paper).

\begin{question}
Is the family of all Haar-countable sets an ideal?
\end{question}

\subsection*{Acknowledgment}

The author would like to thank Jaros\l aw Swaczyna from \L\'od\'z University of Technology for interesting him in the problem whether Haar-finite sets form an ideal, many valuable discussions and sharing unpublished results of the paper \cite{1} (which had been developed at the same time as this paper). Moreover, Jaros\l aw Swaczyna had a big impact on the proof of Proposition \ref{przeliczalne}.

Also, the author would like to express his gratitude to the anonymous referee for his helpful comments that improved the quality of the manuscript.

\end{document}